\documentclass[10pt,a4paper]{amsart} 

\usepackage[utf8]{inputenc}
\usepackage[T1]{fontenc}
\usepackage{amsmath, amssymb}
\usepackage{amsthm}
\usepackage{mathtools}   

\usepackage{xspace} 
\usepackage{xfrac}  
\usepackage{ifthen} 
\usepackage{xifthen} 
\usepackage{array} 
\usepackage{booktabs} 

\usepackage{enumitem} 
\setenumerate[0]{label=(\roman*)}   

\numberwithin{equation}{section}

\usepackage{hyperref}
\hypersetup{
    unicode=true,          
    pdftitle={},    
    pdfauthor={He Weikun},     
    pdfsubject={},   
    pdfkeywords={}{}, 
    pdfborder={0 0 0}
}


\renewcommand{\phi}{\varphi} 

\newcommand\dash{\nobreakdash-\hspace{0pt}}

\newtheorem{thm}{Theorem}[section]
\newtheorem{coro}[thm]{Corollary}
\newtheorem{prop}[thm]{Proposition}
\newtheorem{lemm}[thm]{Lemma}
\newtheorem{conj}[thm]{Conjecture}
\theoremstyle{definition}
\newtheorem{déf}[thm]{Definition}

\newtheorem{exemp}[thm]{Example}

\newcommand{\N}{{\mathbb N}}
\newcommand{\Z}{{\mathbb Z}}
\newcommand{\R}{{\mathbb R}}
\renewcommand{\C}{{\mathbb C}}
\newcommand{\Q}{{\mathbb Q}}
\newcommand{\dd}{{\,\mathrm{d}}}



\newcommand{\ub}{\mathbf{u}}
\newcommand{\vb}{\mathbf{v}}
\newcommand{\wb}{\mathbf{w}}

\DeclareMathOperator{\End}{End}

\DeclareMathOperator{\SL}{SL}
\DeclareMathOperator{\GL}{GL}

\DeclareMathOperator{\SO}{SO}
\DeclareMathOperator{\Sp}{Sp}


\DeclareMathOperator{\Nbd}{Nbd}        
\DeclareMathOperator{\Gr}{Gr}        

\DeclareMathOperator{\im}{im}
\DeclareMathOperator{\rk}{rk}

\DeclareMathOperator{\Span}{Span}
\DeclareMathOperator{\Supp}{Supp}

\DeclareMathOperator{\vol}{vol}

\DeclareMathOperator{\diag}{diag}

\DeclareMathOperator{\dang}{d_\measuredangle}        
\DeclareMathOperator{\dH}{d_{\mathcal H}}            


\newcommand{\abs}[1]{\lvert#1\rvert}    
\newcommand{\norme}[1]{\left\lVert#1\right\rVert}   
\newcommand{\norm}[1]{\lVert#1\rVert}   

\newcommand{\sg}[1]{\left<#1\right>}               






\newcommand{\Prob}[2][]{\ifthenelse{\equal{#1}{}}   
            {\mathbb{P}}               
            {\mathbb{P}_{#1}}\bigl[#2\bigr]}

\newcommand{\Espr}[2][]{\ifthenelse{\equal{#1}{}}   
            {\mathbb{E}}               
            {\mathbb{E}_{#1}}\bigl[#2\bigr]}

\newcommand{\muex}[1][]{\ifthenelse{\isempty{#1}}   
            {\mu_{\mathrm{ex}}}               
            {\mu_{\mathrm{ex}}^{#1}}}


\DeclareMathOperator{\Ncov}{\mathcal{N}}

\DeclareMathOperator{\Lms}{\mathcal{L}}
\newcommand{\alphaini}{\alpha_\mathrm{ini}}
\newcommand{\alphahigh}{\alpha_\mathrm{high}}
\newcommand{\alphainc}{\alpha_\mathrm{inc}}
\newcommand{\Glen}{\mathcal{G}_\mathrm{len}}
\newcommand{\Gstat}{\mathcal{G}_\mathrm{stat}}
\newcommand{\Gvol}{\mathcal{G}_\mathrm{vol}}
\newcommand{\Gproj}{\mathcal{G}_\mathrm{proj}}


\renewcommand{\bullet}{\boldsymbol{\,\cdot\,}}

\begin{document}
\title[Random walks with a Schubert condition]{Random walks on linear groups satisfying a Schubert condition}

\author{Weikun He}
\thanks{The author is supported by ERC grant ErgComNum 682150.}
\address{Einstein Institute of Mathematics, The Hebrew University of Jerusalem, Jerusalem 91904, Israel.}
\email{weikun.he@mail.huji.ac.il}

\begin{abstract}
We study random walks on $\GL_d(\R)$ whose proximal dimension $r$ is larger than $1$ and whose limit set in the Grassmannian $\Gr_{r,d}(\R)$ is not contained any Schubert variety. 
These random walks, without being proximal, behave in many ways like proximal ones.
Among other results, we establish a Hölder-type regularity for the stationary measure on the Grassmannian associated to these random walks.
Using this and a generalization of Bourgain's discretized projection theorem, we prove that the proximality assumption in the Bourgain-Furman-Lindenstrauss-Mozes theorem can be relaxed to this Schubert condition. 

\end{abstract}

\maketitle

\section{Introduction}
Let $d \geq 2$ and let $\mu$ be a Borel probability measure on $\GL_d(\R)$. Let $\Gamma_\mu$ denote the closed semisubgroup generated by the support of $\mu$. 
The random walk on $\GL_d(\R)$ associated to $\mu$ is $(g_n \dotsm g_1)_{n\geq 0}$ where $(g_n)_{n \geq 1}$ is a sequence of independent and identically distributed random variables distributed according to $\mu$. Thus, the distribution of the random walk at time $n\geq 0$ is $\mu^{*n}$, the multiplicative convolution of $\mu$ with itself $n$ times. 

The study of asymptotic behaviors of these random walks, known as the theory of random matrix products, dates back to the 60's.
In this theory, a condition called proximality (also known as contraction) plays an important role.
In this article, we define a property that can be seen as a weak version of the proximality. 
The aim is then to find and prove, under this weaker assumption, results analogous to those which are already known under the proximality assumption.


Let us start by defining this property which we will call \eqref{eq:propertyS} in this article. In order to do so, recall some definition. The \emph{proximal dimension} of subsemigroup $\Gamma \subset \GL_d(\R)$ is 
\[r_\Gamma = \min \bigl\{ \rk \pi \mid \pi \in \overline{\R \Gamma} \setminus \{0\}\bigr\},\]
where $\overline{\R \Gamma}$ denotes the closure in $\End(\R^d)$ of the set of all elements  of the form $\lambda g$ with $\lambda \in \R$ and $g \in \Gamma$.
Since this notion of proximal dimension depends on the specific embedding of $\Gamma$ into some $\GL_d$, it is better to refer to this quantity $r_\Gamma$ as the proximal dimension of the representation $\R^d$ of $\Gamma$ or as the proximal dimension of the action of $\Gamma$ on $\R^d$. 

Thus, $\Gamma$ is proximal if and only if its proximal dimension is equal to $1$. 

We define
\[\Pi_\Gamma = \{\pi \in \overline{\R \Gamma} \mid \rk \pi = r_\Gamma\}.\]
Let $\Gr(r_\Gamma,d)$ denote the Grassmannian of $r_\Gamma$\dash{}dimensional linear subspaces of $\R^d$. The \emph{limit set of $\Gamma$ in the Grassmannian} is defined as 
\[\Lms_\Gamma = \{ \im \pi \in \Gr(r_\Gamma,d) \mid \pi \in \Pi_\Gamma \}.\]

\begin{déf}
We say that $\Gamma$ has property \eqref{eq:propertyS} if its limit set in $\Gr(r_\Gamma,d)$ is not contained in any proper Schubert variety. Equivalently, 
\begin{equation}
\label{eq:propertyS}
\tag{S} \forall W \in \Gr(d-r_\Gamma,d),\, \exists V \in \Lms_\Gamma,\, V \cap W = \{0\}.
\end{equation}
\end{déf}

For example, if the action of $\Gamma$ on $\R^d$ is irreducible and proximal then \eqref{eq:propertyS} is automatically satisfied.

Let $G$ denote the Zariski closure of $\Gamma$ in $\GL_d(\R)$, or in other words, the set of $\R$\dash{}rational points of the Zariski closure of $\Gamma$.
A fundamental result of Gol'dshe\u{\i}d-Margulis~\cite{GoldsheidMargulis} (see also \cite[Lemma 6.23]{BenoistQuint}) asserts that $r_G = r_\Gamma$. In particular, $\Gamma$ is proximal if and only if $G$ is.
We will prove in Lemma~\ref{lm:SisZariski} that $\Gamma$ has property \eqref{eq:propertyS} if and only if $G$ has.

Also in Section~\ref{sc:propS} we will see examples of non-proximal semigroups having the property \eqref{eq:propertyS}.

\subsection{Random walk on the Grassmannian}
Consider the action of $\Gamma = \Gamma_\mu$ on the Grassmannian $\Gr(r_\Gamma,d)$. Given a starting point $V \in \Gr(r_\Gamma,d)$, we then have a corresponding random walk on the Grassmannian: $(g_n \dotsm g_1V)_{n\geq 0}$.

A classical result due to Furstenberg~\cite{Furstenberg1973} asserts that if $\Gamma_\mu$ acts strongly irreducibly and proximally on $\R^d$ then there is a unique $\mu$-stationary Borel probability measure on the projective space $\mathbb{P}(\R^d)$.
We prove in Proposition~\ref{pr:uniquenu} that there is a unique $\mu$-stationary Borel probability measure on $\Gr(r_\Gamma,d)$, provided that $\Gamma$ acts strongly irreducibly on $\R^d$ and satisfies \eqref{eq:propertyS}.

The following proposition is a large deviation inequality about the probability that the random walk in $\Gr(r_\Gamma,d)$ falls into a Schubert variety. 

For $V \in \Gr(r,d)$ and $W \in \Gr(d-r,d)$, define
\[\dang(V,W) = \abs{\det(v_1,\dotsc,v_r, w_1,\dotsc,w_{d-r})}\]
where $(v_1,\dotsc,v_r)$ is an orthonormal basis of $V$ and $(w_1,\dotsc,w_{d-r})$ a basis of $W$.

\begin{prop}
\label{pr:RWonGr1}
Assume that $\mu$ has finite exponential moment, $\Gamma$ acts strongly irreducibly on $\R^d$ and satisfies \eqref{eq:propertyS}.
Then for any $\omega > 0$, there is $c > 0$ and $l_0 \geq 1$ such that for all $n \geq l \geq l_0$, the following holds for any $V \in \Gr(r_\Gamma,d)$ and any $W \in \Gr(d-r_\Gamma,d)$,
\[\mu^{*n} \{ g \in \Gamma \mid \dang(g V,W) \leq e^{-\omega l} \} \leq e^{-cl}.\]
\end{prop}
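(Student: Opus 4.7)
The plan is to reduce the statement, via the Markov property, to a single-time Hölder-type regularity estimate for $\mu^{*l}$. By conditioning on the prefix $g_{n-l}\dotsm g_1$ and using that $g_n \dotsm g_{n-l+1}$ is independent of this prefix and distributed as $\mu^{*l}$, it suffices to prove that there exist $c > 0$ and $l_0 \geq 1$ such that for all $l \geq l_0$, $V \in \Gr(r_\Gamma, d)$ and $W \in \Gr(d-r_\Gamma, d)$, the bound
\[\mu^{*l}\bigl\{g : \dang(gV, W) \leq e^{-\omega l}\bigr\} \leq e^{-cl}\]
holds; call this estimate $(\ast)$. Reducing to $(\ast)$ turns the statement into a pure property of the $l$-step distribution.

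To establish $(\ast)$, I would combine three ingredients. First, under the hypotheses the $r_\Gamma$-th Lyapunov exponent of $\mu$ strictly exceeds the $(r_\Gamma{+}1)$-th, since rescaled elements of $\Gamma_\mu$ accumulate on the rank-$r_\Gamma$ operators of $\Pi_\Gamma$, forcing the top $r_\Gamma$ singular values of typical products to be of the same logarithmic scale while the $(r_\Gamma{+}1)$-th is strictly smaller. A Le Page / Benoist--Quint style large deviation bound then yields $\theta, \eta > 0$ such that $\sigma_{r_\Gamma+1}(g)/\sigma_{r_\Gamma}(g) \leq e^{-\theta l}$ off an event of $\mu^{*l}$-mass at most $e^{-\eta l}$. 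Second, on this good event, writing $E^+(g) \in \Gr(r_\Gamma, d)$ and $F^-(g) \in \Gr(d - r_\Gamma, d)$ for the top and bottom singular subspaces of $g$, a direct linear-algebra computation gives the contraction $d(gV, E^+(g)) \leq C e^{-\theta l}/\dang(V, F^-(g))$, so that $gV$ lies within $O(e^{-\theta l/2})$ of $E^+(g)$ unless $V$ is very close to $F^-(g)$. Third, the law of $E^+(g)$ under $\mu^{*l}$ converges exponentially to the unique $\mu$-stationary measure $\nu$ on $\Gr(r_\Gamma, d)$ furnished by Proposition~\ref{pr:uniquenu}, and property \eqref{eq:propertyS} should yield a Hölder regularity bound $\nu\{V' : \dang(V', W) \leq t\} \leq C t^\alpha$ with $\alpha > 0$ independent of $W$. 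Inserting $t = e^{-\omega l}$ and chaining these inputs produces $(\ast)$ with $c = \min(\eta, \alpha\omega, \theta/2)$ up to constants.

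The main obstacle I anticipate is uniformity in the starting subspace $V$: the approximation $gV \approx E^+(g)$ degrades precisely when $V$ lies near the random bad subspace $F^-(g)$, and $V$ may be any fixed subspace. Controlling this requires a dual Hölder regularity statement for the law of $F^-(g)$ on $\Gr(d-r_\Gamma, d)$, which should follow by applying the same circle of ideas to the contragredient representation of $\Gamma_\mu$: by Gol'dshe\u{\i}d--Margulis the proximal dimension is preserved under duality, and \eqref{eq:propertyS} transfers to the dual via standard Grassmannian duality. With both regularity statements available, one chooses $\omega$ to balance the contraction scale $e^{-\theta l}$, the regularity scale $e^{-\omega l}$ and the dual cut-off, and then extracts $(\ast)$. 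This symmetric interplay between direct and dual regularity, both underwritten by \eqref{eq:propertyS}, is the technical heart of the argument.
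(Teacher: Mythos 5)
Your reduction to a one-step estimate $(\ast)$ via the Markov property is fine, and your second ingredient (the contraction $\dH(gV,E^+(g)) \lesssim \sigma_{r+1}(g)/\sigma_r(g) \cdot \dang(V,F^-(g))^{-1}$) is essentially the inner lemma in the paper's proof. But the third ingredient, which is where all the real work lives, is circular: you propose to use Hölder regularity of the stationary measure $\nu$ on $\Gr(r,d)$, together with exponential convergence of the law of $E^+(g)$ to $\nu$, as \emph{inputs}. In this paper the Hölder regularity of $\nu$ (Corollary~\ref{cr:regularity}) is a \emph{consequence} of the very non-concentration estimate you are trying to prove, obtained by feeding Proposition~\ref{pr:RWonGr} into the machinery of \cite[Theorem 14.1]{BenoistQuint}; and the exponential equidistribution of the law of $E^+(g)$ is itself a spectral-gap statement that standard arguments derive from precisely this kind of large-deviation bound. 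You cannot use these downstream conclusions to bootstrap the estimate from which they flow. Moreover, your sketch never makes concrete how property~\eqref{eq:propertyS} is actually exploited beyond asserting that it ``should'' yield the regularity; this is the crux and it needs to be done by hand.

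The idea the paper actually uses, and which is missing from your proposal, is to linearize the problem in the exterior power $\wedge^r\R^d$ and exploit the $\Gamma$-invariant splitting $\wedge^r\R^d = \Lambda_+ \oplus \Lambda_0$, where $\Lambda_+ = \sum_{\pi\in\Pi_\Gamma}\im(\wedge^r\pi)$ is strongly irreducible and \emph{proximal} with top exponent $r\lambda_{1,\mu}$, and $\Lambda_0 = \bigcap_{\pi\in\Pi_\Gamma}\ker(\wedge^r\pi)$ has strictly smaller top exponent. Property~\eqref{eq:propertyS} enters exactly here: the conditions \eqref{eq:imSI} and \eqref{eq:kerSI} guarantee, respectively, that the linear form $f_\wb\colon \ub\mapsto\ub\wedge\wb$ restricted to $\Lambda_+$ is uniformly bounded away from zero, and that the $\Lambda_+$-component $\vb_+$ of $\vb$ is uniformly bounded away from zero. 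This reduces the non-concentration on $\Gr(r,d)$ to the already-available proximal non-concentration (Theorem~\ref{thm:LargeD}\ref{it:LargeDmc}) applied to the random walk on $\Lambda_+$, with the $\Lambda_0$-contribution beaten by the Lyapunov gap. Without this decomposition you have no non-circular handle on how \eqref{eq:propertyS} produces a quantitative bound uniform in $V$ and $W$.
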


Roughly speaking, this result says that under assumption \eqref{eq:propertyS}, the random walk on $\Gr(r_\Gamma,d)$ does not concentrate in neighborhoods of any proper Schubert variety. 
Here again, if $\Gamma$ is proximal, then this estimate is already known~\cite[Lemma 4.5]{BFLM}. 
We will state a reformulation of this special case below as Theorem~\ref{thm:LargeD}\ref{it:LargeDmc}. 
In fact, we will use this special case as an ingredient in the proof of Proposition~\ref{pr:RWonGr1}.

From another point of view, Proposition~\ref{pr:RWonGr1} can be seen as a special case of the question how the random walk on a linear group $G$ escapes proper subvarieties of $G$. This general question is considered by Aoun in 
\cite{Aoun}. However the main result there (\cite[Theorem 1.2]{Aoun}) is contidional to the Zariski closure of $\Gamma$ being split over $\R$, which only allow to treat the proximal case since all representations of a $\R$-split $\R$-group are proximal.

In Corollary~\ref{cr:regularity} we prove that this result implies a Hölder-type regularity for the $\mu$-stationary measure on $\Gr(r_\Gamma,d)$. Again the proximal case ($r_\Gamma=1$)  is already known and is due to Guivarc'h~\cite[Théorème 7']{Guivarch}.

\subsection{Random walk on the torus}
Now assume that $\mu$ is supported on $\SL_d(\Z)$ and consider the action of $\Gamma = \Gamma_\mu$ on the $d$\dash{}dimensional torus $\mathbb{T}^d = \R^d/\Z^d$. Given a starting point $x_0 \in \mathbb{T}^d$, we then have a corresponding random walk on the torus: $(g_n \dotsm g_1x_0)_{n\geq 0}$. We are interested in the equidistribution of the measure $\mu^{*n}*\delta_{x_0}$, i.e. the distribution of $g_n \dotsm g_1x_0$.

Let us recall the statement of the Bourgain-Furman-Lindenstrauss-Mozes theorem. Let $d \geq 2$ be an integer.

\begin{thm}[Bourgain-Furman-Lindenstrauss-Mozes~\cite{BFLM}]
\label{thm:BFLM}
Let $\mu$ be a probability measure on $\SL_d(\Z)$ with finite exponential moment. Let $\Gamma$ denote the subsemigroup generated by $\Supp(\mu)$. Assume
\begin{itemize}
\item[{\upshape (I)}] the action of $\Gamma$ on $\R^d$ is strongly irreducible;
\item[{\upshape (P)}] the action of $\Gamma$ on $\R^d$ is proximal.
\end{itemize}
Let $\lambda_{1,\mu}$ denote the top Lyapunov exponent of $\mu$. Then for any $0 < \lambda < \lambda_{1,\mu}$ there is a constant $C = C(\mu, \lambda)$ so that if for a point $x \in \mathbb{T}^d$ the measure $\nu_n = \mu^{*n}*\delta_{x}$ satisfies that for some $a \in \Z^d \setminus \{0\}$,
\[ \abs{\hat\nu_n(a)} > t \text{ with } n > C\log\frac{2\norm{a}}{t},\] 
then $x$ admits a rational approximation $\frac{p}{q}$ for $p \in \Z^d$ and $q\in \Z_+$ satisfying
\[ \norme{x -\frac{p}{q}} < e^{-\lambda n} \text{ and } q < \Bigl(\frac{2\norm{a}}{t}\Bigr)^C.\]
\end{thm}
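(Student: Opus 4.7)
The Fourier-analytic identity
\[
\hat{\nu}_n(a) = \int e\bigl(-a\cdot gx\bigr)\dd\mu^{*n}(g) = \int e\bigl(-\transp{g}a\cdot x\bigr)\dd\mu^{*n}(g)
\]
translates the hypothesis $\abs{\hat{\nu}_n(a)} > t$ into a clustering statement for the phases $\transp{g}a\cdot x \bmod 1$ when $g$ is drawn from $\mu^{*n}$. Conditions~(I) and~(P) pass to the transpose semigroup, so the dual walk $\tilde{\mu}$, obtained by pushing $\mu$ forward along $g\mapsto\transp{g}$, is again strongly irreducible and proximal and has the same top Lyapunov exponent $\lambda_{1,\mu}$. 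The three classical inputs I would mobilize for $\tilde{\mu}$ are: the Le Page large deviation principle, which ensures $\norm{\transp{g}a} = e^{(\lambda_{1,\mu}+o(1))n}\norm{a}$ off an exponentially small event; the exponential equidistribution of the direction $[\transp{g}a]\in \mathbb{P}(\R^d)$ toward the unique $\tilde{\mu}$\dash{}stationary measure $\nu$ on $\mathbb{P}(\R^d)$; and the Hölder regularity of $\nu$ due to Guivarc'h.

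The core of the argument is an iterative bootstrap on the Fourier coefficient. Applying Cauchy--Schwarz to the identity
\[
\hat{\nu}_n(a) = \int \hat{\nu}_{n-m}(\transp{h}a)\dd\mu^{*m}(h)
\]
produces a $\mu^{*m}$\dash{}non-negligible set of $h$'s on which $\abs{\hat{\nu}_{n-m}(\transp{h}a)} \gtrsim t$. Iterating this over geometric scales, at each of which the norm of the pushed vector is multiplied by roughly $e^{\lambda_{1,\mu}m}$, and combining with the direction equidistribution, one extracts at some intermediate stage a set $B \subset \Z^d$ of vectors of comparable norm $R \in [\norm{a}, e^{\lambda_{1,\mu}n}\norm{a}]$ whose directions are spread across many transverse lines in $\R^d$, yet on which the map $b\mapsto e(b\cdot x)$ takes essentially a single value. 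The hypothesis $n > C\log(2\norm{a}/t)$ is calibrated precisely so that the iteration runs long enough for such a set $B$ to appear before the estimates degrade.

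The main obstacle is then to convert this phase clustering over a quantitatively transverse set of directions into a rational approximation of $x$, which is what Bourgain's discretized sum-product and projection theorems on $\R/\Z$ are designed for. The Hölder regularity of $\nu$ supplies the quantitative transversality of the directions in $B$, and the near-vanishing of $b\cdot x \bmod 1$ for $b\in B$ at scale $R$ then forces $x$ to lie within $e^{-\lambda n}$ of a rational $p/q$ whose denominator satisfies $q < (2\norm{a}/t)^C$. The delicate points are the choice of the scale $m$ at each iteration, the use of the finite exponential moment of $\mu$ to absorb the accumulated error terms, and the precise input from the discretized projection theorem. It is in this last step that proximality enters most forcefully, and it is precisely this step that Proposition~\ref{pr:RWonGr1} together with the generalization of Bourgain's theorem announced in the introduction are meant to upgrade from~(P) to the Schubert condition~\eqref{eq:propertyS}.
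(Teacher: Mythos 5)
This theorem is quoted from Bourgain--Furman--Lindenstrauss--Mozes~\cite{BFLM}; the present paper does not prove it, but only recalls its two\nobreakdash-phase outline in Section~\ref{sc:check} in order to adapt it. Measured against that outline, your sketch assembles the right raw ingredients --- the convolution identity $\hat\nu_n(a)=\int\hat\nu_{n-m}(\transp{h}a)\dd\mu^{*m}(h)$, Le Page's large deviation estimate, Guivarc'h's H\"older regularity, and Bourgain's discretized projection theorem --- but it misplaces where they act and omits the pivot of the whole argument. Phase~I of BFLM is about the set $A_{t,n}=\{a\in\Z^d:\abs{\hat\nu_n(a)}\ge t\}$: one first establishes a rough lower bound on its box dimension at scale $M$ inside $B(0,N)$ (this is where non\nobreakdash-concentration on $\mathbb{P}(\R^d)$ enters), then repeatedly \emph{increases} this exponent, and it is in that bootstrap step, not at the end, that the discretized projection theorem is used --- one projects a rescaled copy of $A_{t,n}$ onto the random subspaces $V^+_{g^*}$ and sums the projections to gain dimension. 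A Peres--Schlag type estimate then pushes the dimension from $d-\epsilon$ to positive density, and a harmonic\nobreakdash-analytic lemma converts positive density of $A_{t_1,n_1}$ into \emph{granulation} of $\nu_{n_1}$ on the torus: mass at least $t_2$ on a union of $1/N$\nobreakdash-balls centered on a $1/M$\nobreakdash-separated set.

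Your sketch has no analogue of this granulation step; instead it jumps directly to ``a set $B$ of comparable norm on which $e(b\cdot x)$ is essentially constant,'' which conflates the two phases. That kind of conclusion only emerges at the very end of Phase~II, after the granulation has been propagated backward to small times; the denominator bound $q<(2\norm a/t)^C$ is extracted there from the structure of the grain set $X$, using transversality of the $V^+_{g_i}$ and the invertibility lemma for $\sum_i c_i g_i$, not from a projection theorem ``on $\R/\Z$.'' Without the covering\nobreakdash-number dimension bootstrap and the granulation pivot, the iteration you describe has no mechanism to produce either the positive density of large Fourier coefficients or the rationality of $x$, so the sketch as written would not close.
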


For more background and consequences of this result, we refer the readers to the original article~\cite{BFLM}. As pointed out by the authors, the assumption~{\upshape (P)} is a technical condition. It is widely believed that the theorem should hold without this condition. Moreover, the work of Benoist and Quint~\cite[Corollary 1.4]{BQII} suggests the following conjecture.
\begin{conj}
The assumptions {\upshape (I)} and {\upshape (P)} on $\Gamma$ in Theorem~\ref{thm:BFLM} can be replaced by the assumption that the Zariski closure of $\Gamma$ is semisimple, Zariski connected and with no compact factor and acts irreducibly on $\Q^d$. 
\end{conj}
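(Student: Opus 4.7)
The plan is to reduce the conjecture to the main theorem of this paper by verifying hypotheses (I) and (S) under the conjecture's assumptions. Let $G$ denote the Zariski closure of $\Gamma$ in $\GL_d(\R)$, so $G$ is semisimple, Zariski connected, without compact factor, and acts irreducibly on $\Q^d$. A preliminary difficulty is that $\Q$-irreducibility does not automatically give $\R$-irreducibility: for instance, the Weil restriction $G = \mathrm{Res}_{\Q(\sqrt 2)/\Q} \SL_2$ acts $\Q$-irreducibly on $\Q^4$ while $\R^4$ splits as a sum of two copies of the standard $\SL_2(\R)$-representation. One must first reduce to the $\R$-irreducible case, either by passing to a finite index subsemigroup stabilizing each $\R$-component $V_i$ of $\R^d$ and treating the arithmetic fibration of $\mathbb{T}^d$ over the products of lower-dimensional tori, or by a direct Galois-theoretic argument exploiting the $\Q$-lattice structure. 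Once $\R$-irreducibility is available, Zariski connectedness of $G$ upgrades it to strong irreducibility of $\Gamma$ (any finite index subsemigroup has Zariski closure $G$), establishing hypothesis (I).

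The core remaining step is to verify property (S). By Lemma~\ref{lm:SisZariski} it suffices to prove it for $G$ itself. Using the Gol'dshe\u{\i}d--Margulis description of $\Pi_G$ together with the representation theory of real semisimple Lie groups, the limit set $\Lms_G \subset \Gr(r_G, d)$ is a single $G$-orbit, identifiable with a partial flag variety $G/P$, where $P$ is the stabilizer of a canonical weight subspace $V_0$ of dimension $r_G$. Arguing by contradiction, suppose $\Lms_G$ is contained in the Schubert variety $\{V : V \cap W \neq \{0\}\}$ for some $W \in \Gr(d - r_G, d)$. Then for every $g \in G$, $gV_0 \cap W \neq \{0\}$, or equivalently $V_0 \cap g^{-1} W \neq \{0\}$, so the $G$-orbit $G \cdot W$ and its Zariski closure lie in the dual Schubert locus $\{W' : V_0 \cap W' \neq \{0\}\}$. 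A Schubert-calculus dimension count, combined with $\R$-irreducibility and the no-compact-factor hypothesis, should produce a contradiction: compact factors supply real orthogonal structures which do permit such a $W$, so this hypothesis must enter decisively.

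The main obstacles are precisely these two reductions. Handling the $\R$-reducible case (Weil restriction type examples) is genuinely subtle because the random walk on $\mathbb{T}^d$ does not decompose across a Galois-permuted splitting of $\R^d$, and it likely requires either an enhancement of the paper's main theorem to treat several $\R$-irreducible factors simultaneously, or a substantial additional argument tying them together via the $\Q$-lattice structure. Proving (S) in full generality, in turn, requires a uniform representation-theoretic criterion—valid for non-$\R$-split semisimple groups where the restricted root combinatorics complicates the orbit structure of $\Lms_G$—ensuring that the homogeneous flag variety $G/P$ avoids every proper Schubert variety in $\Gr(r_G, d)$. Together these appear to be the deepest remaining ingredients, beyond the methods developed in the present paper.
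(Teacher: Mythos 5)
This statement is a \emph{conjecture}; the paper does not prove it, and explicitly presents Theorem~\ref{thm:main} only as a \emph{partial} result toward it. Your strategy — deduce property~\eqref{eq:propertyS} from the conjecture's hypotheses and then invoke Theorem~\ref{thm:main} — cannot succeed, and the paper itself contains the counterexample. In Example~2.5, $\Gamma$ has Zariski closure $\SO(1,d)$ (with $d>6$) acting on $\wedge^2\R^{1+d}$: this group is semisimple, Zariski connected, with no compact factor, the action is strongly irreducible (and one can arrange it to be $\Q$-irreducible by taking a $\Q$-form and an arithmetic lattice), yet the action does \emph{not} satisfy~\eqref{eq:propertyS}, because there are several disjoint minimal $\Gamma$-invariant closed subsets in $\Gr(d-1,\wedge^2\R^{1+d})$ and hence $\Lms_\Gamma$ sits inside a proper Schubert variety (Lemma~\ref{lm:LimSI}). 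So the claim in your second paragraph, that the no-compact-factor hypothesis plus $\R$-irreducibility forces $\Lms_G$ out of every Schubert variety, is simply false. Your instinct that compact factors are the only obstruction is not borne out; non-$\R$-split non-compact groups can fail~\eqref{eq:propertyS} as well.

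There is a second independent obstruction you acknowledge only in passing: Theorem~\ref{thm:main} also assumes $r_\Gamma\mid d$, and nothing in the conjecture's hypotheses guarantees this (in the $\SO(1,d)$ example above, $r_\Gamma=d-1$ and $\dim=\binom{d+1}{2}$, which are typically not in the required relation). Combined with the failure of~\eqref{eq:propertyS}, this means that even a complete execution of your sketch would at best recover a strict sub-case of the conjecture, not the conjecture itself. Your first-paragraph concerns about $\Q$-irreducible but $\R$-reducible actions (Weil restriction) are real and well observed, but they are downstream of the more fundamental problem: the conjecture genuinely lies beyond the methods of this paper, which is exactly why the author states it as a conjecture and proves only the~\eqref{eq:propertyS}-plus-divisibility case. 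A resolution would require, as the author notes, strengthening the discretized projection theorem (or replacing it) so as to handle stationary measures on $\Gr(r,d)$ concentrated on a proper Schubert variety, and separately removing the divisibility constraint.
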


In the present article, we present the following partial result towards this conjecture. 
\begin{thm}
\label{thm:main}
Theorem~\ref{thm:BFLM} still holds when the assumption~{\upshape (P)} is replaced by the assumption that $\Gamma$ satisfies \eqref{eq:propertyS} and that $r_\Gamma$ divides $d$.
\end{thm}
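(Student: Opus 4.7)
The strategy is to replicate the argument of Bourgain-Furman-Lindenstrauss-Mozes~\cite{BFLM}, substituting each proximal ingredient by its \eqref{eq:propertyS}-analogue proved earlier in the paper. The proof is by contrapositive: assuming $\abs{\hat\nu_n(a)} > t$ with $n > C\log(2\norm{a}/t)$, one aims to construct a rational point $p/q$ with $\norm{x-p/q} < e^{-\lambda n}$ and $q < (2\norm{a}/t)^C$. As in~\cite{BFLM}, one fixes an intermediate scale $\ell$ of order $\log(\norm{a}/t)$ and uses
\[\hat\nu_n(a) \;=\; \int_{\Gamma} \widehat{\mu^{*\ell}*\delta_x}(g^Ta)\dd\mu^{*(n-\ell)}(g),\]
so that the outer walk of length $n-\ell$ transports the test vector $a$ through its transpose.

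Under \eqref{eq:propertyS} the transposed random walk still has proximal dimension $r_\Gamma$ and still satisfies \eqref{eq:propertyS}, since orthogonal-complement identifies $\Gr(r_\Gamma,d)$ with $\Gr(d-r_\Gamma,d)$ and sends Schubert varieties to Schubert varieties. Consequently, for $\ell$ large, $g^Ta/\norm{g^Ta}$ equidistributes toward a random $r_\Gamma$-subspace drawn from the transposed stationary measure on $\Gr(r_\Gamma,d)$. Two results of the paper then take over the roles played by proximal tools in~\cite{BFLM}: Proposition~\ref{pr:RWonGr1} ensures that $g^Ta$ is quantitatively transverse to any fixed $(d-r_\Gamma)$-plane outside an exponentially small $\mu^{*(n-\ell)}$-set, and Corollary~\ref{cr:regularity} provides the Hölder regularity of the stationary measure on $\Gr(r_\Gamma,d)$, replacing Guivarc'h's theorem on projective space.

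The core step is a flattening: Fourier concentration of $\nu_n$ at $a$ must translate into concentration of $\mu^{*\ell}*\delta_x$ near affine $(d-r_\Gamma)$-planes orthogonal to random elements of $\Lms_{\Gamma^T}$, and the generalized discretized projection theorem of the paper is then applied to convert this into an approximate diophantine condition for $x$. The hypothesis $r_\Gamma \mid d$ enters here: it permits iterating the projection step over $d/r_\Gamma$ transverse rational $r_\Gamma$-subspaces whose spans exactly cover $\R^d$, and assembling the resulting partial rational approximations into a single approximation of $x$ on $\mathbb{T}^d$ with controlled common denominator $q$. Without divisibility, the rational sublattice produced by successive iterations would be genuinely lower-dimensional or incommensurable with $\Z^d$, and the denominators would compound in an uncontrolled way.

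The main obstacle, as I see it, lies in this final assembly. In the proximal setting, each projection yields one approximate integer equation $\sg{a'}{x} \approx p'/q$, and $d$ independent such equations pin down $qx$ as a near-lattice point; in our setting each projection produces an $r_\Gamma$-dimensional near-rational affine subspace containing $x$, and one must combine $d/r_\Gamma$ of them via a Bézout-style argument on integer sublattices while keeping the loss in $q$ polynomial in $\norm{a}/t$. The remaining ingredients — the iteration over scales, the martingale and large-deviation bookkeeping, and the choice of $\ell$ — should transfer with mostly routine modifications from~\cite{BFLM}, with the propositions and corollaries established earlier in the paper inserted wherever proximality was previously invoked.
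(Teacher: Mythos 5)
Your high-level identification of the two key inputs (a non\dash{}concentration estimate replacing BFLM's Lemma 4.5, and a higher-rank discretized projection theorem) matches the paper's strategy, and the remark that the transposed walk also satisfies~\eqref{eq:propertyS} is correct and used. However, two of your specific mechanisms do not match what the paper does. First, Corollary~\ref{cr:regularity} is \emph{not} an ingredient: the paper explicitly states that the entire stationary-measure discussion (Proposition~\ref{pr:uniquenu}, Lemma~\ref{lm:nuSch=0}, Corollary~\ref{cr:regularity}) is irrelevant to Theorem~\ref{thm:main}. What replaces the proximal regularity lemma of Guivarc'h is the finite-time, quantitative estimate of Proposition~\ref{pr:RWonGr}\ref{it:dangV+W} on the distribution of $V^+_g$ for $g \sim \mu^{*m}$, i.e.\ the regularity of the pushforward of $\mu^{*m}$ to $\Gr(r,d)$, not of the stationary measure. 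This distinction matters because the projection theorem is applied at each scale $m$ to the measure $\eta = (g \mapsto V^+_g)_* \mu^{*m}$, and the uniform-in-$m$ Hölder bound on $\eta$ is exactly~\eqref{eq:regularV+}.

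Second, the role of $r_\Gamma \mid d$ is not a ``B\'ezout-style argument on integer sublattices'' combining near-rational $r$\dash{}dimensional affine pieces. In the paper the divisibility enters purely through transversality in $\R^d$: with $k = d/r$, one draws $k$ independent samples $g_1,\dotsc,g_k$ so that $V^+_{g_1^*},\dotsc,V^+_{g_k^*}$ are mutually $\dang$\dash{}transverse (the event $\Gvol$ in the proof of Proposition~\ref{pr:prop6.3}), and then $\R^d = \bigoplus_i V^+_{g_i^*}$. This is used (a) to amplify the covering number of the set of large Fourier coefficients along the map $\vec{a} \mapsto \sum_{i=1}^k g_i^* a_i$ and conclude $\Ncov(\Sigma_{\vec{g}}(T),M') \geq (N'/M')^{\alpha + k\tau} e^{-O(\omega m)}$, and (b) in Phase II to show, via the analogue of BFLM Lemma~7.9, that $h = \sum_{i=1}^k c_i g_i$ is invertible whenever the $V^+_{g_i}$ and $V^+_{g_i^*}$ are well-spaced and the singular-value gap $\sigma_{r+1}/\sigma_r$ is small. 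Neither step involves integer sublattices or compounding denominators; once these volume/transversality estimates are in hand, the diophantine endgame is carried over from \cite{BFLM} essentially verbatim, which is why the paper regards it as routine rather than as the main obstacle.
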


For example, on account of Proposition~\ref{pr:Cgroups} and Example~\ref{ex:SLdH}, the Bourgain-Furman-Lindenstrauss-Mozes Theorem holds if the Zariski closure of $\Gamma$ in $\GL_d(\R)$ is one of the following.
\begin{enumerate}
\item $\SL(d/2,\C)$ embedded in $\SL(d,\R)$, $d \geq 4$ is a multiple of $2$,
\item $\SO(d/2,\C)$ embedded in $\SL(d,\R)$, $d \geq 6$ is a multiple of $2$, 
\item $\Sp(d/4,\C)$ embedded in $\SL(d,\R)$, $d \geq 4$ is a multiple of $4$. 
\item $\SL(d/4,\mathbb{H})$ embedded in $\SL(d,\R)$, $d \geq 8$ is a multiple of $4$. 
\end{enumerate}

There are two inputs in proving this result. 
The first one is the non\dash{}concentration estimate Proposition~\ref{pr:RWonGr1} above.   
The second one is a higher rank discretized projection theorem proved in \cite{He_proj}, which we will state as Theorem~\ref{thm:proj} below. The latter is a generalization of the discretized projection theorem for projections to lines due to Bourgain~\cite{Bourgain2010}.

Having these two inputs available, there is no difficulty in adapting the original proof of Bourgain-Furman-Lindenstrauss-Mozes to the situation of Theorem~\ref{thm:main}. 
We will omit a large part of the details and instead only indicate places where attention needs to be payed.

The requirement of avoiding all proper Schubert varieties comes from the statement of the discretized projection theorem. However the discretized projection theorem proved in \cite{He_proj} is far from being optimal. Thus, if one proves a stronger projection theorem, one could expect to cover a larger class of non-proximal groups.

The assumption that the proximal dimension $r_\Gamma$ divides the dimension of the space $d$ is again a technical condition. It is essentially used to say that $d/r_\Gamma$ subspaces of dimension $r_\Gamma$ in general position are in direct sum and the sum is the entire $\R^d$.
Removing this assumption would make this article unnecessarily long without conceptual novelty.

\subsection{Organisation of the article}
In Section~\ref{sc:propS}, we will start by proving some facts about the property~\eqref{eq:propertyS} then give both examples of groups with and without \eqref{eq:propertyS}. In particular, we prove in Lemma~\ref{lm:SisZariski} that a subsemigroup of $\GL_d(\R)$ satisfies the property~\eqref{eq:propertyS} if and only if its Zariski closure in $\GL_d(\R)$ does. We prove in Proposition~\ref{pr:Cgroups} that groups obtained by restriction of scalars from $\C$ to $\R$ have property~\eqref{eq:propertyS}.

Section~\ref{sc:grass} is devoted to random walks on the Grassmannian. The main goal is to prove Proposition~\ref{pr:RWonGr1}, the non-concentration estimate for neighborhoods of proper Schubert varieties. Actually, we will prove a more detailed version of it in Proposition~\ref{pr:RWonGr}. Then we show how it can be interpreted as a regularity result of the stationary measure on the Grassmannian (Corollary~\ref{cr:regularity}).

In section~\ref{sc:check}, we will highlight several places in the proof of Theorem~\ref{thm:main}. This part is not self-contained, since much of the proof is just routine check and a large part of the details can be found in the original article~\cite{BFLM}.

\subsection{Notation convention}
The notation for the Grassmannian variety $\Gr(\bullet)$, the projective space $\mathbb{P}(\bullet)$ and the space of endomorphisms $\End(\bullet)$ are with respect to the linear structure over the field $\R$.
We will specify with a subscript when we are dealing with linear structure over another field.
For instance, $\End_\C(\C^d)$ is the space of $\C$-linear endomorphisms of $\C^d$ while $\End(\C^d)$ denotes the space of $\R$-linear endomorphisms of the underlying $\R$-linear space of $\C^d$.

For nonzero vector $x \in \R^d$, $\bar{x}$ denotes the line $\R x \in \mathbb{P}(\R^d)$.

\subsection*{Acknowledgement}
I would like to thank Emmanuel Breuillard, Hillel Furstenberg, Elon Lindenstrauss, Shahar Mozes and Péter Varj\'u for enlightening conversations.

\section{The property \texorpdfstring{\eqref{eq:propertyS}}{(S)}}
\label{sc:propS}

Let $\Gamma$ be a subsemigroup of $\GL_d(\R)$.
Assume that $\Gamma$ acts irreducibly on $\R^d$. 
We write $r = r_\Gamma$ to denote its proximal dimension and $\Lms_\Gamma$ its limit set in the Grassmannian $\Gr(r,d)$ (for the definitions see the introduction).
Unless state otherwise, the Grassmannian $\Gr(r,d)$ is endowed with its topology of differential manifold.

Since the action of $\Gamma$ on $\R^d$ is irreducible,  \cite[Lemma 4.2]{BenoistQuint} asserts that $\Lms_\Gamma$ is a minimal closed set in $\Gr(r,d)$ under the action of $\Gamma$.
When $\Gamma$ is proximal, it is the unique minimal $\Gamma$\dash{}invariant set in $\Gr(r,d)$.
When $\Gamma$ is not proximal, there could be several disjoint minimal close sets (cf. \cite[Remark 4.4]{BenoistQuint}).

Recall that we defined the property \eqref{eq:propertyS} as
\begin{equation}\label{eq:imSI}
\forall W \in \Gr(d - r, d),\; \exists \pi \in \Pi_\Gamma,\quad \im \pi \cap W= \{0\}.
\end{equation}

\subsection{Properties} 
We collect some basic properties about \eqref{eq:propertyS}.

\begin{lemm}
\label{lm:SisZariski}
Let $\Gamma$ be a subsemigroup of $\GL(\R^d)$ acting irreducibly on $\R^d$. Let $G$ be the Zariski closure of $\Gamma$. Then $\Gamma$ satisfies \eqref{eq:propertyS} if and only if $G$ satisfies \eqref{eq:propertyS}. Moreover, $\Gamma$ satisfies \eqref{eq:propertyS} if and only if
\begin{equation}\label{eq:kerSI}
\forall V \in \Gr(r,d),\; \exists \pi \in \Pi_\Gamma,\quad V \cap \ker \pi = \{0\}.
\end{equation}
\end{lemm}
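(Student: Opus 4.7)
The plan rests on three ingredients: the $\Gamma$\dash{}minimality of $\Lms_\Gamma$ from \cite[Lemma 4.2]{BenoistQuint}; the identity $r_\Gamma = r_G$ of Gol'dshe\u{\i}d--Margulis; and the fact that the Zariski closure of a subsemigroup of $\GL_d(\R)$ is an algebraic group.

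For the Zariski invariance of \eqref{eq:propertyS}, the inclusion $\R\Gamma \subset \R G$ gives $\overline{\R \Gamma} \subset \overline{\R G}$, which combined with $r_\Gamma = r_G$ yields $\Pi_\Gamma \subset \Pi_G$ and $\Lms_\Gamma \subset \Lms_G$. Hence \eqref{eq:propertyS} for $\Gamma$ trivially implies it for $G$. For the converse, suppose $\Lms_\Gamma$ is contained in the proper Schubert variety $X_W = \{V \in \Gr(r,d) : V \cap W \neq \{0\}\}$ for some $W \in \Gr(d-r,d)$, and let $Z$ denote the Zariski closure of $\Lms_\Gamma$ in $\Gr(r,d)$. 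Then $Z \subset X_W$ is a proper Zariski closed $\Gamma$\dash{}invariant subset, hence $G$\dash{}invariant by Zariski density of $\Gamma$ in $G$. The intersection $Z \cap \Lms_G$ is then a nonempty (it contains $\Lms_\Gamma$), closed, $G$\dash{}invariant subset of $\Lms_G$, so by $G$\dash{}minimality it equals $\Lms_G$. Therefore $\Lms_G \subset X_W$, and \eqref{eq:propertyS} fails for $G$.

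For the equivalence between \eqref{eq:propertyS} and \eqref{eq:kerSI}, I would use transpose duality. The transposed semigroup $\transp\Gamma = \{\transp g : g \in \Gamma\}$ acts irreducibly on $\R^d$ with the same proximal dimension $r$, and $\pi \mapsto \transp\pi$ bijects $\Pi_\Gamma$ with $\Pi_{\transp\Gamma}$ via $\im(\transp\pi) = (\ker\pi)^\perp$. Since $\dim V_0 + \dim \ker\pi = d$, the condition $V_0 \cap \ker\pi = \{0\}$ is equivalent to $V_0^\perp \cap \im(\transp\pi) = \{0\}$, so \eqref{eq:kerSI} for $\Gamma$ is exactly \eqref{eq:propertyS} for $\transp\Gamma$, and similarly for $G$ in place of $\Gamma$. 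Combining with the Zariski invariance already proved, the target equivalence for $\Gamma$ reduces to the intrinsic statement that the algebraic group $G$ satisfies \eqref{eq:propertyS} iff it satisfies \eqref{eq:kerSI}.

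I expect this last intrinsic step to be the main obstacle. Part of it is handled by rerunning the minimality-plus-Zariski-density argument on the dual limit set $\Lms_G^{\ker} = \{\ker\pi : \pi \in \Pi_G\} \subset \Gr(d-r,d)$: because $G$ is a group, the identity $\ker(\pi g^{-1}) = g\ker\pi$ makes $\Lms_G^{\ker}$ closed and $G$\dash{}invariant, and its $G$\dash{}minimality follows by transporting \cite[Lemma 4.2]{BenoistQuint} through the transpose. The truly delicate point is to establish the symmetry between \eqref{eq:propertyS} and \eqref{eq:kerSI} at the level of $G$ itself; for this I would use the incidence variety $\{(V,K) \in \Gr(r,d) \times \Gr(d-r,d) : V \cap K \neq \{0\}\}$ together with the fact that, for the reductive group $G$, both $\Lms_G$ and $\Lms_G^{\ker}$ are closed $G$\dash{}orbits paired by this incidence, so that $\Lms_G$ lies in some proper Schubert variety if and only if $\Lms_G^{\ker}$ lies in the corresponding dual Schubert variety, closing the equivalence.
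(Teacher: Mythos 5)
Your Zariski-invariance argument is correct and in fact takes a cleaner route than the paper's: rather than chaining through the three intermediate conditions (i)--(iii) and using the fact that $\{g : g\,\im\pi\cap W\neq\{0\}\}$ is Zariski closed, you directly observe that the Zariski closure $Z$ of $\Lms_\Gamma$ is $G$\dash{}invariant by Zariski density, and then invoke $G$\dash{}minimality of $\Lms_G$ to force $\Lms_G\subset Z\subset X_W$. This is a nice simplification and it buys you a more conceptual proof of that half of the lemma. The transpose-duality reduction of \eqref{eq:kerSI} for $\Gamma$ to \eqref{eq:propertyS} for $\transp\Gamma$, and hence to \eqref{eq:propertyS} for $\transp G$, is also correct.

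However, the last step has a genuine gap. You reduce everything to the intrinsic statement that the algebraic group $G$ satisfies \eqref{eq:propertyS} if and only if it satisfies \eqref{eq:kerSI} (equivalently, that $\Lms_G$ avoids all proper Schubert varieties iff $\Lms_G^{\ker}$ does), and you yourself flag this as the delicate point. The sketch you give via the incidence variety $\{(V,K):V\cap K\neq\{0\}\}$ and the claim that $\Lms_G$ and $\Lms_G^{\ker}$ are \emph{paired} so that one lies in a Schubert variety iff the other does is not established: the incidence correspondence $p_1\circ p_2^{-1}$ sends a single Schubert hypersurface on one side to all of the other Grassmannian, so no containment relation is transported this way, and ``paired closed $G$\dash{}orbits'' by itself does not give the required symmetry. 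What is actually needed is a structural fact about $G$, and the paper supplies it via Mostow's theorem: since $G$ is an algebraic group acting irreducibly on $\R^d$, there is a positive-definite symmetric bilinear form with respect to which $G$ is self-adjoint. After changing the inner product, $G^*=G$, hence $\Pi_{G^*}=\{\pi^*:\pi\in\Pi_G\}=\Pi_G$, and the identity $\ker\pi^*=(\im\pi)^\perp$ together with $V\cap W=\{0\}\iff V^\perp\cap W^\perp=\{0\}$ makes \eqref{eq:imSI_G}$\iff$\eqref{eq:kerSI_G} immediate. Without an input of this kind (self-adjointness after a change of inner product, or equivalently that the compact form of $G$ preserves some Euclidean structure and the orthogonal complement exchanges $\Lms_G$ and $\Lms_G^{\ker}$ equivariantly), your incidence-variety argument does not close the equivalence.
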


\begin{proof}
Let $G$ be the Zariski closure of $\Gamma$ in $\GL_d(\R)$. By Gold'she\u{\i}d-Margulis~\cite{GoldsheidMargulis}, $G$ has the same proximal dimension as $\Gamma$ : $r_G = r_\Gamma = r$. 


We first establish the equivalence between \eqref{eq:imSI} and 
\begin{equation}\label{eq:imSI_G}
\forall W \in \Gr(d-r,d),\; \exists \pi \in \Pi_G,\quad \im \pi \cap W = \{0\}.
\end{equation}
More precisely, they are equivalent to each of the following conditions.
\begin{enumerate}
\item \label{it:imSI1} $\forall W \in \Gr(d-r,d)$, $\forall \pi \in \Pi_\Gamma$, $\exists g \in \Gamma$ such that $g \im \pi \cap W = \{0\}$.
\item \label{it:imSI2} $\forall W \in \Gr(d-r,d)$, $\forall \pi \in \Pi_\Gamma$, $\exists g \in G$ such that $g \im \pi \cap W = \{0\}$.
\item \label{it:imSI3} $\forall W \in \Gr(d-r,d)$, $\forall \pi \in \Pi_G$, $\exists g \in G$ such that $g \im \pi \cap W = \{0\}$.
\end{enumerate}
Obviously, \ref{it:imSI1}$\implies$\eqref{eq:imSI}.
To see \eqref{eq:imSI}$\implies$\ref{it:imSI1}, we assume that $W \in \Gr(d-r,d)$ and $\pi' \in \Pi_\Gamma$ such that $\im \pi' \cap W = \{0\}$. Let $\pi \in \Pi_\Gamma$ be another element. Since $\Lms_\Gamma$ is a minimal $\Gamma$-invariant subset of $\Gr(r,d)$, there is a sequence $(g_n) \in \Gamma^\N$ such that $g_n \im \pi \to \im \pi'$. Thus, $g_n \im \pi \cap W = \{0\}$ for $n$ large enough.
The same argument applied to $G$ instead of $\Gamma$ gives the equivalence between \eqref{eq:imSI_G} and \ref{it:imSI3}.
Since $\Pi_\Gamma \subset \Pi_G$ we have immediately the implications \eqref{eq:imSI}$\implies$\eqref{eq:imSI_G} and \ref{it:imSI3}$\implies$\ref{it:imSI2}. 
Finally, \ref{it:imSI2}$\implies$\ref{it:imSI1} because for fixed subspaces $\im \pi$ and $W$, the set of $g$ such that $g \im \pi \cap W \neq \{0\}$ is a Zariski closed subset of $\GL_d(\R)$.

It remains to show the equivalence between \eqref{eq:imSI} and \eqref{eq:kerSI}. For $f \in \End(\R^d)$ denote by $f^*$ its adjoint with respect to the usual Euclidean structure of $\R^d$.  Observe that for all $\pi \in \End(\R^d)$, we have $\ker \pi^* = (\im \pi)^\perp$ and that $V \cap W = \{0\}$ if and only if $V^\perp \cap W^\perp = \{0\}$ for all $V \in \Gr(r,d)$ and all $W \in \Gr(d-r,d)$. Consequently, the argument above applied to $\Gamma^*$ shows the equivalence between \eqref{eq:kerSI} and
\begin{equation}\label{eq:kerSI_G}
\forall V \in \Gr(r,d),\; \exists \pi \in \Pi_G,\quad V \cap \ker \pi = \{0\}.
\end{equation}

Using the same observation, we deduce \eqref{eq:imSI_G} $\iff$ \eqref{eq:kerSI_G} immediately for the special case when $G$ is self-adjoint (i.e. $G^* = G$). 
In the general case, since $G$ is an algebraic group acting irreducibly on $\R^d$, by a result of Mostow~\cite{Mostow}, there is a positive definite symmetric bilinear form on $\R^d$ with respect to which $G$ is self-adjoint. Thus we are back to the special case.
\end{proof}

\begin{lemm} 
\label{lm:LimSI}
If $\Gamma$ acts irreducibly on $\R^d$ and satisfies \eqref{eq:propertyS}, then $\Lms_\Gamma$ is the unique minimal $\Gamma$-invariant subset in $\Gr(r_\Gamma,d)$.
\end{lemm}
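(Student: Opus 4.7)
The plan is to reduce the statement to the following claim: for every $V \in \Gr(r,d)$ with $r = r_\Gamma$, the orbit closure $\overline{\Gamma V}$ meets $\Lms_\Gamma$. Once this is shown, let $F$ be any minimal closed $\Gamma$\dash{}invariant subset of $\Gr(r,d)$ and pick $V \in F$. Then $F \cap \Lms_\Gamma$ is a nonempty closed $\Gamma$\dash{}invariant subset; by minimality of $F$ and of $\Lms_\Gamma$ (the latter being already known to be a minimal closed $\Gamma$\dash{}invariant set, as recalled at the beginning of this section) both $F$ and $\Lms_\Gamma$ must equal $F \cap \Lms_\Gamma$, whence $F = \Lms_\Gamma$.

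To establish the claim, I would apply Lemma~\ref{lm:SisZariski}, according to which \eqref{eq:propertyS} is equivalent to \eqref{eq:kerSI}: given $V$, there exists $\pi \in \Pi_\Gamma$ with $V \cap \ker \pi = \{0\}$. Writing $\pi = \lim_n \lambda_n g_n$ for some $\lambda_n \in \R$ and $g_n \in \Gamma$, the condition $V \cap \ker \pi = \{0\}$ combined with $\dim V = r = \rk \pi$ forces $\pi$ to map $V$ bijectively onto $\im \pi$. Passing to the Plücker embedding $\Gr(r,d) \hookrightarrow \mathbb{P}(\bigwedge^r \R^d)$ and rescaling by $\lambda_n^r$, one sees that $g_n V \to \im \pi$ in $\Gr(r,d)$, so $\im \pi \in \overline{\Gamma V} \cap \Lms_\Gamma$, which proves the claim.

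I do not foresee any significant obstacle. The one point requiring a little care is that the vectors $g_n v_i$ themselves may fail to converge: only the rescaled vectors $\lambda_n g_n v_i \to \pi v_i$ do, and one needs the nondegeneracy $\pi v_1 \wedge \dotsb \wedge \pi v_r \neq 0$, guaranteed by the injectivity of $\pi|_V$, to conclude that the Plücker line of $g_n V$ indeed converges to the Plücker line of $\im \pi$ in projective space. Property \eqref{eq:propertyS}, in its kernel\dash{}avoiding form \eqref{eq:kerSI}, is precisely the hypothesis that supplies this nondegeneracy for every starting subspace $V$.
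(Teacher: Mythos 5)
Your proof is correct and follows essentially the same route as the paper: invoke \eqref{eq:kerSI} to find $\pi \in \Pi_\Gamma$ with $V \cap \ker\pi = \{0\}$, take $\lambda_n g_n \to \pi$, deduce $g_n V \to \im\pi \in \Lms_\Gamma$, and conclude. The paper argues directly that $\Lms_\Gamma \subset \overline{\Gamma V}$ for every $V$ (so $\Lms_\Gamma$ is contained in every nonempty closed $\Gamma$\dash{}invariant set), which is a marginally more direct route to uniqueness than your intersection-of-minimal-sets step, but the content is the same; your Plücker-embedding justification of the convergence $g_n V \to \im\pi$ is a detail the paper leaves implicit.
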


\begin{proof}
Let $V \in \Gr(r_\Gamma,d)$. By \eqref{eq:kerSI}, there is $\pi \in \Pi_\Gamma$ such that $V \cap \ker \pi = \{0\}$. Let $(\lambda_n) \in \R^\N$ and $(g_n) \in \Gamma^\N$ be sequences such that $\lambda_n g_n \to \pi$. Then $g_n V \to \im \pi$ as $n \to +\infty$. Therefore $\im \pi \in \overline{\Gamma V}$ and hence $\Lms_\Gamma \subset \overline{\Gamma V}$. It follows that $\Lms_\Gamma$ is the unique minimal $\Gamma$-invariant subset in $\Gr(r_\Gamma,d)$.
\end{proof}

\begin{lemm}
\label{lm:Sforfindex}
Let $\Gamma$ be a subgroup of $\GL(\R^d)$.
\begin{enumerate}
\item \label{it:Sforf1} Let $\Gamma'$ be a finite index subgroup of $\Gamma$. Then $\Gamma$ satisfies \eqref{eq:propertyS} if and only if $\Gamma'$ does. 
\item \label{it:Sforf2} If $\Gamma$ acts strongly irreducibly on $\R^d$ and satisfies \eqref{eq:propertyS} then $\Lms_\Gamma$ is not contained in any finite union of proper Schubert varieties in $\Gr(r,d)$.
\item \label{it:Sforf3} If $\Gamma$ acts strongly irreducibly on $\R^d$ and satisfies \eqref{eq:propertyS} then $\Gamma$ does not preserve any finite union of proper Schubert varieties in $\Gr(r,d)$.
\end{enumerate}
\end{lemm}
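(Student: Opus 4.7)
The plan has three steps: dispose of \ref{it:Sforf1} via the kernel characterization \eqref{eq:kerSI} from Lemma~\ref{lm:SisZariski}; prove \ref{it:Sforf2} by replacing the hypothetical cover with a $\Gamma$-invariant Zariski-closed refinement and applying Lemma~\ref{lm:LimSI} to a well-chosen finite-index subgroup; then deduce \ref{it:Sforf3} immediately from \ref{it:Sforf2} using Lemma~\ref{lm:LimSI}.

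For \ref{it:Sforf1}, I would fix a coset decomposition $\Gamma = \bigsqcup_{i=1}^n g_i \Gamma'$. If $\lambda_m g_m \to \pi \in \overline{\R\Gamma}$ with $g_m \in \Gamma$, then after passing to a subsequence all $g_m$ lie in a single coset $g_i \Gamma'$, so writing $g_m = g_i h_m$ with $h_m \in \Gamma'$ gives $\lambda_m h_m \to g_i^{-1}\pi \in \overline{\R\Gamma'}$ with the same rank as $\pi$. This shows $r_{\Gamma'} = r_\Gamma$ and produces, for every $\pi \in \Pi_\Gamma$, an element $\pi' := g_i^{-1}\pi \in \Pi_{\Gamma'}$ with $\ker \pi' = \ker \pi$. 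The direction $\Gamma' \Rightarrow \Gamma$ is immediate from $\Pi_{\Gamma'} \subset \Pi_\Gamma$; for the converse I would invoke the kernel form \eqref{eq:kerSI}: given $V \in \Gr(r,d)$, pick a witness $\pi \in \Pi_\Gamma$ satisfying $V \cap \ker \pi = \{0\}$ and replace it by the above $\pi' \in \Pi_{\Gamma'}$, which has the same kernel.

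For \ref{it:Sforf2}, assume towards contradiction that $\Lms_\Gamma \subset X := \bigcup_{i=1}^N \mathcal{S}_{W_i}$. The set $Y := \bigcap_{g \in \Gamma} gX$ is $\Gamma$-invariant and Zariski closed, contained in $X$, and still contains $\Lms_\Gamma$; by Noetherianity of $\Gr(r,d)$ it actually reduces to a finite sub-intersection, hence has finitely many irreducible components $Z_1,\dotsc,Z_m$. Each $Z_j$, being irreducible and contained in the finite union $X$, lies entirely inside a single Schubert variety $\mathcal{S}_{W^{(j)}}$. Since $\Gamma$ permutes the $Z_j$, the kernel $\Gamma_0 \leq \Gamma$ of the induced action on $\{Z_1,\dotsc,Z_m\}$ is of finite index and preserves every $Z_j$. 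By \ref{it:Sforf1}, $\Gamma_0$ still satisfies \eqref{eq:propertyS}, and strong irreducibility of $\Gamma$ makes $\Gamma_0$ act irreducibly on $\R^d$; so Lemma~\ref{lm:LimSI} applies to $\Gamma_0$ and $\Lms_{\Gamma_0}$ is the unique minimal $\Gamma_0$-invariant closed subset of $\Gr(r,d)$. Now $Z_1$ is a non-empty $\Gamma_0$-invariant closed subset of $\Gr(r,d)$, so it must contain $\Lms_{\Gamma_0}$, giving $\Lms_{\Gamma_0} \subset Z_1 \subset \mathcal{S}_{W^{(1)}}$, contradicting \eqref{eq:propertyS} for $\Gamma_0$. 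Statement \ref{it:Sforf3} then follows at once: any non-empty $\Gamma$-invariant closed subset of $\Gr(r,d)$ contains the unique minimal $\Lms_\Gamma$ by Lemma~\ref{lm:LimSI}, so a $\Gamma$-preserved finite union of proper Schubert varieties would contain $\Lms_\Gamma$, which is forbidden by \ref{it:Sforf2}.

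The main obstacle I anticipate is the Noetherian plus irreducible-component step in \ref{it:Sforf2}: one has to turn the non-$\Gamma$-invariant cover $X$ into the $\Gamma$-invariant refinement $Y$, use finite dimensionality of $\Gr(r,d)$ to reduce $Y$ to a finite sub-intersection with only finitely many irreducible components, and exploit irreducibility of each component to trap it inside a single Schubert hyperplane. Once that reduction is in place, part \ref{it:Sforf1} together with Lemma~\ref{lm:LimSI} plugs in a finite-index subgroup whose limit set is forced into a single Schubert variety, finishing the argument.
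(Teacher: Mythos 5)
Your argument is correct and closely parallels the paper's strategy for all three items, with two notable mechanical differences worth pointing out. For part~(i), the paper writes $\Gamma = \Gamma' F$ with $F$ finite (a right-coset decomposition) and deduces $\Pi_\Gamma = \Pi_{\Gamma'} F$, hence $\Lms_\Gamma = \Lms_{\Gamma'}$ because multiplying on the right by an invertible element does not change the image; this gives the equivalence directly in the image form \eqref{eq:imSI}. You instead use a left-coset decomposition, obtaining $\pi' = g_i^{-1}\pi \in \Pi_{\Gamma'}$ with $\ker\pi' = \ker\pi$, and then route through the kernel form \eqref{eq:kerSI} via Lemma~\ref{lm:SisZariski}. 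Both work; the paper's version has the small bonus of establishing $\Lms_\Gamma = \Lms_{\Gamma'}$ outright, which it then reuses in part~(ii).

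For part~(ii), the paper introduces a coarser, $\Gamma$-equivariant Noetherian topology on $\Gr(r,d)$ whose closed sets are generated by the maximal proper Schubert varieties, and shows that the closure of $\Lms_\Gamma$ in this topology is irreducible (by stabilizing one component with a finite-index subgroup $\Gamma'$, noting $\Lms_{\Gamma'}=\Lms_\Gamma$, and invoking minimality of $\Lms_{\Gamma'}$). You instead start from the hypothetical Zariski-closed cover $X=\bigcup_i\mathcal{S}_{W_i}$ and manufacture a $\Gamma$-invariant Zariski-closed refinement $Y=\bigcap_{g\in\Gamma}gX$, then run the same finite-index/minimality step on an irreducible component $Z_1\subset\mathcal{S}_{W^{(1)}}$ of $Y$. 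The paper's device is canonical (the closure does not depend on a particular cover), while yours is a direct construction tailored to a given cover; both rely on the same ingredients, namely Noetherianity, the finite-index invariance from part~(i), strong irreducibility to pass irreducibility down to $\Gamma_0$, and the (unique) minimality of the limit set. Part~(iii) is derived identically in both.

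One small point worth being explicit about: in your deduction ``$Z_1$ is a non-empty $\Gamma_0$-invariant closed subset, so it must contain $\Lms_{\Gamma_0}$'' you implicitly use compactness of $\Gr(r,d)$ to guarantee that every non-empty closed $\Gamma_0$-invariant set contains a minimal one, which by Lemma~\ref{lm:LimSI} must be $\Lms_{\Gamma_0}$. This is standard and fine, but it is the step where uniqueness (as opposed to mere minimality of $\Lms_{\Gamma_0}$) is actually needed.
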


\begin{proof}
If $\Gamma' < \Gamma$ has finite index then there exists a finite set $F \subset \GL_d(\R)$ such that $\Gamma = \Gamma' F$. We deduce that $r_{\Gamma'} = r_\Gamma$ and $\Pi_\Gamma =   \Pi_{\Gamma'} F$ and hence $\Lms_\Gamma = \Lms_{\Gamma'}$. This proves \ref{it:Sforf1}.

Consider the topology on $\Gr(r,d)$ for which the proper closed sets are finite unions of intersections of sets of the form (these sets are precisely the maximal proper Schubert varieties)
\begin{equation*}
\bigl\{ V \in \Gr(r,d) \mid V \cap W \neq \{0\} \bigr\}, \quad W \in \Gr(d-r,d).
\end{equation*}
This topology is coarser than the Zariski topology hence is Noetherian. It follows that the closure of $\Lms_\Gamma$ in this topology has finitely many irreducible components.

The group $\Gamma$ permutes these irreducible components. Let $C$ be one of the components and let $\Gamma'$ be the stabilizer of $C$.
Then $\Gamma'$ is a subgroup of finite index in $\Gamma$. Remember from the argument for the first part that $\Lms_\Gamma = \Lms_{\Gamma'}$. 
Hence there exists $V \in \Lms_{\Gamma'} \cap C$. 
By the definition of $\Gamma'$, we know that the orbit $\Gamma' V \subset C$. 
Since $\Gamma$ acts strongly irreducibly on $\R^d$, so does $\Gamma'$. 
Hence by \cite[Lemma 4.2]{BenoistQuint}, $\Lms_{\Gamma'}$ is the closure of $\Gamma' V$ for the usual topology. Since $C$ is closed for the usual topology, we conclude that $\Lms_{\Gamma'} \subset C$. 
This shows that $\Lms_\Gamma$ has only one irreducible component.
Therefore, if $\Lms_\Gamma$ is not contained in any proper Schubert variety then it is not contained any finite union of proper Schubert varieties, finishing the proof of \ref{it:Sforf2}.

Finally, \ref{it:Sforf3} follows from \ref{it:Sforf2} and Lemma~\ref{lm:LimSI}.
\end{proof}

\subsection{Examples and non-examples.}
Now we shall see some examples of groups satisfying \eqref{eq:propertyS}. 

\begin{exemp}
\label{ex:SLdH}
Let $\mathbb{H}$ denote the usual real quaternion algebra. If the Zariski closure of $\Gamma \subset \GL(4d,\R)$ in $\GL(4d,\R)$ is $\SL(d,\mathbb{H})$, then $\Gamma$ satisfies \eqref{eq:propertyS}. 

To see this, recall that $\SL(d,\mathbb{H})$ can be defined in the following way. Fix a $\mathbb{H}$\dash{}structure on $\R^{4d}$, i.e. a morphism of algebra from $\mathbb{H}$ to $\End(\R^{4d})$. 
We say a $\R$-linear map $f \in \End(\R^{4d})$ is $\mathbb{H}$\dash{}linear if it commutes with $\mathbb{H}$. 
The group $\SL(d,\mathbb{H})$ is then the set of all elements $g \in \SL(4d,\R)$ that are $\mathbb{H}$\dash{}linear.
Using transvections one proves easily that $\Lms_G$ is the set of all $\mathbb{H}$-lines in $\R^{4d}$, i.e. $4$\dash{}dimensional $\R$-linear subspace which are preserved by the multiplication by $\mathbb{H}$.
The group $\SL(d,\mathbb{H})$ satisfy \eqref{eq:propertyS}, for otherwise\footnote{The author would like to thank Linxiao Chen for suggesting this argument.} there would be $W \in \Gr(4d - 4, 4d)$ such that $\mathbb{H} v \cap W \neq \{0\}$ for all $v \in \R^{4d} \setminus \{0\}$. Then the map $\mathbb{P}(\mathbb{H}) \times \mathbb{P}(W) \to \mathbb{P}(\R^{4d})$ defined by
\[(\R a, \R w) \mapsto \R aw \text{ for all } a \in \mathbb{H} \setminus \{0\} \text{ and } w \in W \setminus \{0\}\]
would be an differential map from a manifold of dimension $4d-2$ onto a manifold of dimension $4d-1$. This is impossible by Sard's theorem.
The property \eqref{eq:propertyS} of $\Gamma$ follows from that of $\SL(d,\mathbb{H})$.

Note also that $\SL(d,\mathbb{H})$ acts strongly irreducibly on $\R^{4d}$ since its Zariski closure is Zariski connected (because it is generated by transvections) and the commutant of $\SL(d,\mathbb{H})$ in $\End(\R^{4d})$ is precisely $\mathbb{H}$.
\end{exemp}

Next, we will prove that groups obtained from restriction of scalars from $\C$ to $\R$ satisfy \eqref{eq:propertyS}.

\begin{prop}
\label{pr:Cgroups}
Let $d \geq 2$ be an integer. Let $\mathbb{G} < \GL_d$ be a connected algebraic group over $\C$ for which the the standard representation $\C^d$ is irreducible. Let $\mathbb{G}_\R < \GL_{2d}$ denote the restriction of scalar of $\mathbb{G}$ to the ground field $\R$ and let $G < \GL_{2d}(\R)$ be the group of $\R$-points of $\mathbb{G}_\R$. Then the action of $G$ on $\R^{2d}$ is strongly irreducible, has proximal dimension $2$ and satisfies \eqref{eq:propertyS}.
\end{prop}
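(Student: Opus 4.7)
The plan is to identify $\R^{2d}$ with $\C^d$ via a fixed complex structure $J \colon \R^{2d} \to \R^{2d}$ with $J^2 = -\Id$, so that $G = \mathbb{G}(\C)$ sits inside the $\C$-linear endomorphisms $\End_\C(\C^d) \subset \End(\R^{2d})$ and commutes with $J$. For strong irreducibility, I would first establish $\R$-irreducibility: if $U \subset \R^{2d}$ is $G$-invariant then so is $JU$, hence $U \cap JU$ and $U + JU$ are both $G$-invariant complex subspaces of $\C^d$, each equal to $\{0\}$ or $\C^d$ by $\C$-irreducibility. The delicate case is $U \cap JU = \{0\}$ and $U + JU = \C^d$, i.e., $U$ is a totally real $\mathbb{G}(\C)$-invariant form of $\C^d$. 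To rule this out I would differentiate the curve $t \mapsto \exp(tX)u$ (for $X \in \mathfrak{g}$ and $u \in U$, with $t \in \C$) at $t = 0$ in the real and in the imaginary directions: both $Xu$ and $iXu$ must then lie in $U$, so $Xu \in U \cap JU = \{0\}$, forcing $\mathfrak{g}$ to act trivially and contradicting $\dim_\C \mathbb{G} \geq 1$. Strong irreducibility then follows because Weil restriction preserves Zariski connectedness, so $\mathbb{G}_\R$ cannot nontrivially permute the irreducible components of any finite union of proper subspaces.

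For the proximal dimension, $\End_\C(\C^d)$ is closed in $\End(\R^{2d})$ and contains $G$, so $\overline{\R G} \subset \End_\C(\C^d)$. Any $\C$-linear endomorphism has $\R$-rank equal to twice its $\C$-rank, hence $r_G = 2 r_\C$ where $r_\C$ is the complex proximal dimension of $\mathbb{G}$ on $\C^d$. The unipotent radical of $\mathbb{G}$ acts trivially on the irreducible representation $\C^d$, which lets me replace $\mathbb{G}$ by its reductive quotient; standard highest-weight theory then produces a rank-$1$ element in $\overline{\R \mathbb{G}(\C)}$, obtained as the normalized limit of powers of a regular semisimple element having a unique eigenvalue of largest modulus on $\C^d$. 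Thus $r_\C = 1$ and $r_G = 2$.

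The main difficulty is property~(S). Using the $\mathbb{G}$-orbit-closure description of $\Lms_G$ in $\mathbb{P}(\C^d)$, this limit set is a closed irreducible complex subvariety whose $\C$-span is all of $\C^d$ by $\C$-irreducibility. Given $W \in \Gr(2d-2, 2d)$, write $W = \ker \phi$ for a surjective $\R$-linear $\phi \colon \R^{2d} \to \C$, and uniquely decompose $\phi(v) = \alpha(v) + \overline{\tilde\beta(v)}$ with $\alpha, \tilde\beta \colon \C^d \to \C$ both $\C$-linear. A direct computation yields $\Im\bigl(\overline{\phi(v)}\,\phi(Jv)\bigr) = |\alpha(v)|^2 - |\tilde\beta(v)|^2$, and this quantity is nonzero exactly when $V = \C v$ satisfies $V \cap W = \{0\}$. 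Suppose for contradiction that $|\alpha| = |\tilde\beta|$ throughout the cone over $\Lms_G$. Since $\Lms_G$ spans $\C^d$, neither $\alpha$ nor $\tilde\beta$ can vanish identically on it, so the ratio $\alpha/\tilde\beta$ is a meromorphic function on $\Lms_G$ of modulus $1$ wherever defined; by Riemann extension across its zero locus and the maximum modulus principle on the compact irreducible complex variety $\Lms_G$, this ratio is a constant $\lambda$ with $|\lambda| = 1$. Spanning again gives $\alpha = \lambda \tilde\beta$ as $\C$-linear functionals on $\C^d$; but then $\phi(v) = \lambda \tilde\beta(v) + \overline{\tilde\beta(v)}$ has image $\{\lambda\gamma + \bar\gamma : \gamma \in \C\}$, which is a single real line because the associated real $2\times 2$ matrix has determinant $|\lambda|^2 - 1 = 0$, contradicting surjectivity of $\phi$. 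The subtle step is leveraging the complex structure of $\Lms_G$ against a real-valued transversality condition, which is precisely what makes the maximum modulus principle applicable.
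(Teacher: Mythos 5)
Your proof is correct, and the argument for property~\eqref{eq:propertyS} is genuinely different from the one in the paper. Where you reduce \eqref{eq:propertyS} for a codimension-two $W = \ker\phi$ to the identity $\det(\phi|_V) = |\alpha(v)|^2 - |\tilde\beta(v)|^2$ and then invoke Riemann extension and the maximum modulus principle on the compact irreducible variety $\Lms_G$ to force $\alpha = \lambda\tilde\beta$ with $|\lambda| = 1$, the paper instead proves a slightly stronger statement (valid for \emph{every} complex line $V$, not just $V \in \Lms_G$): writing $W = W_0 \oplus \R w_1 \oplus \R w_2$ with $W_0 = W \cap iW$, it observes that $GV$ is a Zariski-constructible subset of $\mathbb{P}_\C(\C^d)$, that the failure of \eqref{eq:propertyS} forces a coordinate projection of $GV$ into $\R \subset \C$, and that the only constructible subsets of $\C$ contained in $\R$ are finite---contradicting irreducibility. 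Both arguments exploit the tension between the complex-algebraic structure of the orbit and a real-codimension-one condition; yours packages that tension analytically (a nonconstant holomorphic function on a compact irreducible variety cannot have constant modulus), while the paper packages it algebraically (a positive-dimensional constructible set cannot sit inside $\R$). Your route requires knowing that $\Lms_G$ is the closed $\mathbb{G}$-orbit, hence a smooth (or at least normal) projective variety, which is needed for the Riemann extension step; the paper's route needs only constructibility of the orbit, which is free. Your argument for $\R$-irreducibility is also different: you prove it directly by showing a totally real $G$-invariant form $U$ (the case $U \cap JU = 0$, $U + JU = \C^d$) would force $\mathfrak{g}_\C$ to annihilate $U$ and hence $\C^d$, whereas the paper deduces $\R$-irreducibility as a corollary of its codimension-two lemma. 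Both routes are clean; the paper's is more economical (one lemma serves both purposes), yours is more elementary in the sense that irreducibility does not depend on the Schubert-avoidance statement.
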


In view of Lemmata~\ref{lm:Sforfindex} and \ref{lm:SisZariski}, the conclusion also holds for any subsemigroup $\Gamma$ of $\GL_{2d}(\R)$ if the group of $\R$-points of the identity component of the Zariski closure of $\Gamma$ is such $G$.

\begin{proof}
Note that, as abstract groups, $\mathbb{G}$ and $G$ are isomorphic. We identify $\C^d$ with $\R^{2d}$ and view $\GL_d(\C)$ as a subgroup of $\GL_{2d}(\R)$. Then the underlying sets of $\mathbb{G}$ and of $G$ are the same.

Since $\mathbb{G}$ has a faithful irreducible representation, $\mathbb{G}$ is reductive. By \cite[\S 12.4.5]{Springer}, as an algebraic group, $\mathbb{G}_\R$ is isomorphic to $\mathbb{G} \times \mathbb{G}$. Hence $\mathbb{G}_\R$ is Zariski connected and reductive. In particular, $\R^{2d}$ is totally reducible as a linear representation of $G$ over $\R$.

We first show that the proximal dimension of $G$ is equal to $2$.
On the one hand, any limit of a converging sequence $(\lambda_n g_n)$ with $\lambda_n \in \R$ and $g_n \in \GL(d,\C)$ inside the space of $\R$-linear endomorphisms of $\C^d$ is actually $\C$-linear. Every nonzero $\C$-linear endomorphism has $\R$-rank at least $2$. Therefore the proximal dimension of $G$ is at least $2$. 
On the other hand, using the the theory of highest weight we can find a sequence of $g_n \in G$ such that $\norm{g_n}^{-1}g_n$ converges to a nonzero endomorphism of $\C$-rank equal to $1$. More precisely, let $\mathbb{T}$ be a maximal torus of $\mathbb{G}$. Choose a set of positive roots. Consider the decomposition of $\C^d$ into weight spaces with respect to $\mathbb{T}$. We know that there is a highest weight and the corresponding weight space has dimension is $1$. Take $\lambda \colon \C^* \to \mathbb{T}$ to be a multiplicative one-parameter subgroup inside the Weyl chamber defined by the system of positive roots. Define $g_n = \lambda(n)$, $n \geq 1$ and it is easy to see that $\norm{g_n}^{-1}g_n$ converges to a projection onto the highest weight space. 

In the meanwhile, we saw that the limit set $\Lms_G$ consists of complex lines. To finish the proof, we need a lemma.
\begin{lemm}
Let $W$ be a $\R$-linear subspace of $\C^d$ of real codimension $2$ and let $V \subset \C^d$ be a complex line $V = \C v$. There exists $g \in G$ such that $gV \cap W = \{0\}$.
\end{lemm}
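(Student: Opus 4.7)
The plan is to analyze $W$ via its maximal complex subspace $W_{\C} := W \cap iW$ and reduce the lemma to a statement about the $\mathbb{G}$-orbit of $[v]$ in a low-dimensional complex projective space. Let $\phi\colon \C^d \to \C^d/W_{\C}$ be the canonical $\C$-linear projection and write $m = d - \dim_{\C} W_{\C}$. A direct check yields $\phi(W) \cap i\phi(W) = \phi(W \cap iW) = \{0\}$, so $\phi(W)$ is totally real in $\C^d/W_{\C}$; combined with $\dim_{\R}\phi(W) = 2m-2$, this forces $2m-2 \leq m$, i.e.\ $m \in \{1,2\}$. Hence either $W$ is itself a complex hyperplane of $\C^d$ (case $m=1$), or $W_{\C}$ has complex codimension $2$ and, after a $\C$-linear change of coordinates in $\C^d/W_{\C} \cong \C^2$, the image $\phi(W)$ becomes the standard maximal totally real plane $\R^2 \subset \C^2$ (case $m=2$).

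In case $m=1$, the conclusion is immediate: $W$ is $\C$-linear, so $\C gv \cap W = \{0\}$ is equivalent to $gv \notin W$, and irreducibility of $\mathbb{G}$ on $\C^d$ forbids $\mathbb{G} v \subset W$. In case $m=2$, a short calculation (determining $\{\alpha \in \C \mid \alpha u \in \R^2\}$ for $u \in \C^2$) shows that for nonzero $v' \in \C^d$, the condition $\C v' \cap W \neq \{0\}$ is equivalent to $\phi(v') = 0$ or $[\phi(v')] \in \mathbb{P}(\R^2) \subset \mathbb{P}(\C^2)$. So it suffices to find $g \in \mathbb{G}$ with $\phi(gv) \neq 0$ and $[\phi(gv)]$ off the real circle $\mathbb{P}(\R^2)$.

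For this, I would consider the rational map $[\phi]\colon \mathbb{P}(\C^d) \dashrightarrow \mathbb{P}(\C^2)$ and restrict it to the Zariski closure $X$ of the $\mathbb{G}$-orbit of $[v]$ in $\mathbb{P}(\C^d)$. Since $\mathbb{G}$ is Zariski connected, $X$ is an irreducible complex projective subvariety, and thus $[\phi](X)$ is an irreducible constructible subset of $\mathbb{P}(\C^2) \cong \C\mathbb{P}^1$, hence either a single point or a cofinite subset. The single-point alternative would mean $\mathbb{G} v \subset \phi^{-1}(\C u)$ for some $u \in \C^2$, a proper complex subspace, contradicting the irreducibility of the action on $\C^d$. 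Hence $[\phi](X)$ is cofinite in $\C\mathbb{P}^1$ and, in particular, meets the complement of the real circle $\mathbb{P}(\R^2)$; any $g \in \mathbb{G}$ realising this provides the required element of $G$, since $G$ and $\mathbb{G}$ coincide as sets.

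The main obstacle is case $m=2$, where the non-intersection condition $\C gv \cap W = \{0\}$ is only a real algebraic condition, so the \emph{bad} locus in $\mathbb{P}(\C^d)$ is a real (rather than complex) hypersurface and purely complex-algebraic dimension counts upstairs cannot rule it out directly. The trick is to push the problem down to $\C\mathbb{P}^1$, where the rigid dichotomy between finite and cofinite complex constructible subsets lets the irreducibility of $\mathbb{G}$ eliminate the bad alternative.
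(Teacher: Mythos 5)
Your argument is essentially the paper's: after isolating the maximal complex subspace $W_\C = W \cap iW$, disposing of the case where $W$ is a complex hyperplane, and reducing $\C gv \cap W \neq \{0\}$ to a real condition on the image of $gv$ in the $\C$-dimension-$2$ quotient, you exploit that a constructible subset of $\C\mathbb{P}^1$ (the paper uses the affine coordinate $p \colon U \to \C$ of the same quotient) lying inside a real locus must be finite, which contradicts Zariski connectedness and irreducibility of $\mathbb{G}$. One small imprecision: in the final step you work with the Zariski closure $X$ of the orbit and locate a point of $[\phi](X)$ off the real circle, but you need a point of the actual orbit $\mathbb{G}[v]$; this is repaired by applying the finite-or-cofinite dichotomy directly to the set $[\phi]\bigl(\mathbb{G}[v]\setminus\mathbb{P}(W_\C)\bigr)$, which is already irreducible and constructible because $\mathbb{G}$ is Zariski connected, just as the paper argues with the orbit itself rather than its closure.
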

\begin{proof}
Indeed, the intersection $W_0 = W \cap iW$ is a $\C$\dash{}linear subspace of $\C$\dash{}codimension $1$ or $2$. If $W_0$ has $\C$\dash{}codimension $1$, then $W = W_0$ and by the $\C$-irreducibility of $G$, there is $g \in G$ such that $gV \cap W = \{0\}$. Otherwise there exist two vectors $w_1, w_2 \in \C^d$ such that $\C^d = W_0 \oplus \C w_1 \oplus \C w_2$ and $W = W_0 \oplus \R w_1 \oplus \R w_2$. Assume for a contradiction that for all $g \in G$, $gV \cap W \neq \{0\}$. We claim that then the orbit $G V$ is contained in a finite union of $\C$-linear hyperplanes, which is impossible because $\mathbb{G}$ is Zariski connected and acts irreducibly on $\C^d$.

In order to prove the claim, we work in the $\C$-Zariski topology in the projective space $\mathbb{P}_\C(\C^d)$. Indeed, being the image of a morphism of varieties, the orbit $G V$ is a constructible set. Let $U$ denote the complement of $\mathbb{P}_\C(\C w_2 \oplus W_0)$ in $\mathbb{P}_\C(\C^d)$, which is an open set. The intersection $U \cap G V$ is again constructible. Consider the coordinate projection $p : U \to \C$, $p(x) = x_2$ where $x_2$ is the unique element in $\C$ such that $x \subset \C (w_1 + x_2 w_2) \oplus W_0$.
Being the image of a constructible set, $p(U \cap G V)$ is constructible. Moreover, since $g V$ intersect non-trivially $W$ for every $g \in G$, we have $p(U \cap G V) \subset \R$. But the only constructible subsets of $\C$ contained in $\R$ are the finite subsets. Hence $p(U \cap G V)$ is a finite set of real numbers, say $\{a_1, \dotsc a_N\}$. This means
\[G V \subset \mathbb{P}_\C( \C w_2 \oplus W_0) \cup \bigcup_{k=1}^N \mathbb{P}_\C( \C(w_1 + a_k w_2) \oplus W_0),\]
which concludes the proof of the claim and that of the lemma.
\end{proof}

Now we conclude the proposition from the lemma. Assume for a contradiction that the action of $G$ on $\R^{2d}$ is not irreducible. Then by the complete reducibility there is a $G$-invariant subspace $W$ of $\R^{2d}$ with dimension $\dim_\R(W) \leq \frac{2d}{2} \leq 2d - 2$.
Hence for a nonzero vector $v \in W$, we have $\C gv \cap W \neq \{0\}$ for all $g \in G$.
This contradicts the lemma. Therefore the action of $G$ on $\R^{2d}$ must be irreducible.
It is strongly irreducible because $\mathbb{G}_\R$ is Zariski connected. 
Finally, the property~\eqref{eq:propertyS} follows immediately form the lemma because every element in $\Lms_G$ is a complex line.
\end{proof}




We also have examples where \eqref{eq:propertyS} is not satisfied.

\begin{exemp}
Consider $\SO(1,d)$, $d > 6$ acting on $\wedge^2 \R^{1+d}$. Let $\Gamma < \GL(\wedge^2 \R^{1+d})$ be the corresponding subgroup. The action is strongly irreducible. The proximal dimension is $d - 1$. However, there are more than one disjoint compact $\Gamma$-invariant subsets in $\Gr(d-1, \wedge^2 \R^{1+d})$ , see \cite[Remark 4.4]{BenoistQuint}. 
In view of Lemma~\ref{lm:LimSI}, $\Gamma$ does not satisfy \eqref{eq:propertyS}.
\end{exemp}

\section{Random walk on the Grassmannian}
\label{sc:grass}

Given a Borel probability measure $\mu$ on $\GL_d(\R)$, it induces a random walk on each of the Grassmannian varieties $\Gr(k,d)$, $1\leq k \leq d -1$. Here we are interested in the random walk on $\Gr(r,d)$, where $r = r_\Gamma$ is the proximal dimension of $\Gamma = \Gamma_\mu$, the closed subsemigroup generated by $\Supp(\mu)$. 
The principal goal of this section is a large deviation estimate in Proposition~\ref{pr:RWonGr} for groups satisfying \eqref{eq:propertyS}. This can be interpreted as a regularity result for the $\mu$-stationary measure on $\Gr(r,d)$, as shown in Corollary~\ref{cr:regularity}.

For $k = 1 ,\dotsc,d$, we endow the Grassmannian $\Gr(k,d)$ with the distance $\dH$ defined by
\[\forall V, V' \in \Gr(k,d),\quad \dH(V,V') = \max_{v \in V : \norm{v} = 1} d(v,V').\]
Equivalently $\dH(V,V')$ is the Hausdorff distance between the closed unit ball in $V$ and that in $V'$.

Recall that the Euclidean norm on $\R^d$ induces an Euclidean norm on $\wedge^k\R^d$ for each $k = 1 ,\dotsc, d$. For linear subspaces $V, W \subset \R^d$ of dimension respectively $t$ and $s$, we define
\[\dang(V,W) = \frac{\norm{v_1 \wedge \dotsm \wedge v_t \wedge w_1 \wedge \dotsm \wedge w_s}}{\norm{v_1 \wedge \dotsm \wedge v_t} \norm{w_1 \wedge \dotsm \wedge w_s }}\]
where $(v_1,\dotsc,v_t)$ is a basis of $V$ and $(w_1,\dotsc,w_s)$ a basis of $W$. Restricted to the projective space $\mathbb{P}(\R^d)$, $\dang(\bullet,\bullet)$ and $\dH(\bullet,\bullet)$ coincides. In other cases, $\dang$ is not a distance. For instance, $\dang(V,W) = 0$ if and only if $V$ and $W$ have nontrivial intersection. Thus $\dang(V,W)$ measures how far $V$ is away from the Schubert variety $\{V \in \Gr(t,d) \mid V \cap W \neq \{0\} \}$.

For $g \in \GL_d(\R)$, consider its Cartan decomposition $g = k \diag(\sigma_1(g),\dotsc,\sigma_d(g))l$ where $\sigma_1(g) \geq \dotsc \geq \sigma_d(g) > 0$ are the singular values of $g$. Define
\[V^+_g = k\Span(e_1,\dotsc,e_r) \text{ and } V^-_g = l^{-1}\Span(e_{r+1},\dotsc, e_d)\]
where $(e_1,\dotsc, e_d)$ is the standard basis of $\R^d$. In the case where $r = 1$, $V^+_g$ corresponds to the notation $\theta(g)$ in \cite{BFLM} and $V^-_g$ corresponds to $H(g)$.

Note that when $\sigma_r(g) = \sigma_{r+1}(g)$, the Cartan decomposition of $g$ is not unique and $V^+_g$ and $V^-_g$ are not uniquely defined. However, this inconvenience does not matter for our purpose. In this case, simply choose an arbitrary Cartan decomposition of $g$ and $V^+_g$ and $V^-_g$ refer to the associated subspaces. Obviously, this choice can be made in a measurable manner.

For each $k = 1 ,\dotsc, d$, denote by $\lambda_{k,\mu}$ the $k$-th Lyapunov exponent associated to the random walk defined by $\mu$. Recall that it can be defined as
\[\lambda_{k,\mu} = \lim_{n \to +\infty} \frac{1}{n}\int \sigma_k(g) \dd \mu^{*n}(g)\]
by the law of large numbers due to Furstenberg~\cite{Furstenberg1963}. A fundamental result of Guivarc'h-Raugi~\cite{GuivarchRaugi} states that if $\Gamma$ acts strongly irreducibly on $\R^d$, then
\[\lambda_{1,\mu} = \dotsb = \lambda_{r,\mu} > \lambda_{r + 1,\mu}.\]

\begin{prop}\label{pr:RWonGr}
Let $\mu$, $\Gamma$ and $r$ be as above. Assume that $\mu$ has finite exponential moment, $\Gamma$ acts strongly irreducibly on $\R^d$ and satisfies \eqref{eq:propertyS}.
Then for any $\omega > 0$, there is $c > 0$ and $l_0 \geq 1$ such that for all $n \geq l \geq l_0$, the following holds.
\begin{enumerate}
\item \label{it:danggVW} For any $V \in \Gr(r,d)$ and any $W \in \Gr(d-r,d)$,
\[\mu^{*n} \{ g \in \Gamma \mid \dang(g V,W) \leq e^{-\omega l} \} \leq e^{-cl}.\]
\item \label{it:danggVV+} For any $V \in \Gr(r,d)$,
\[\mu^{*n} \{ g \in \Gamma \mid \dH(g V,V^+_g) \leq e^{-(\lambda_{1,\mu} - \lambda_{r+1,\mu}  -\omega) n} \} \geq 1 - e^{-c n}.\]
\item \label{it:dangV+W} For any $W \in \Gr(d-r,d)$,
\[\mu^{*n} \{ g \in \Gamma \mid \dang(V^+_g,W) \leq e^{-\omega l} \} \leq e^{-cl}.\]
\end{enumerate}
\end{prop}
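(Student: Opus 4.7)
The plan is to prove the three parts in the order (iii), (ii), (i), with (iii) containing the main new ingredient. All three reduce to the proximal non\dash{}concentration estimate Theorem~\ref{thm:LargeD}\ref{it:LargeDmc} via the Plücker embedding
\[\Gr(r,d) \hookrightarrow \mathbb{P}(\wedge^r\R^d),\quad V = \Span(v_1,\dotsc,v_r) \mapsto \R\,(v_1 \wedge \dotsm \wedge v_r),\]
which is $\Gamma$-equivariant for the exterior\dash{}power action $\wedge^r g$ and sends a Schubert variety $\{V : V \cap W \neq \{0\}\}$ (for $W \in \Gr(d-r,d)$) to the intersection of the Plücker image with the linear hyperplane $H_W \subset \wedge^r\R^d$ cut out by wedging with a (nonzero) Plücker vector of $W$. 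Up to normalizing factors, $\dang(V,W)$ equals the projective distance of the Plücker image of $V$ to $H_W$.

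For (iii), the key observation is that on $\wedge^r\R^d$ the induced random walk is proximal: by Guivarc'h--Raugi, strong irreducibility of $\Gamma$ on $\R^d$ forces $\lambda_{1,\mu}=\dotsb=\lambda_{r,\mu}>\lambda_{r+1,\mu}$, so the top Lyapunov gap of $\wedge^r\mu$ equals $\lambda_{r,\mu}-\lambda_{r+1,\mu}>0$, and the top singular direction of $\wedge^r g$ is precisely the Plücker image of $V^+_g$. Let $U \subset \wedge^r\R^d$ be the smallest $\Gamma$-invariant subspace containing the Plücker image of $\Lms_\Gamma$; it is $\Gamma$-irreducible by the minimality of $\Lms_\Gamma$ (Lemma~\ref{lm:LimSI}). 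Property~\eqref{eq:propertyS} enters twice here: first, together with Lemma~\ref{lm:Sforfindex}\ref{it:Sforf3}, it is used to upgrade $\Gamma$-irreducibility on $U$ to strong irreducibility; second, it guarantees that $H_W$ does not contain $U$, since otherwise every $V \in \Lms_\Gamma$ would satisfy $V \cap W \neq \{0\}$, contradicting \eqref{eq:imSI}. Applying Theorem~\ref{thm:LargeD}\ref{it:LargeDmc} to the strongly\dash{}irreducible proximal random walk on $U$ with hyperplane $H_W \cap U$ (and a slightly adjusted exponent) then yields (iii).

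For (ii), the Cartan\dash{}decomposition estimate
\[\dH(gV, V^+_g) \le \frac{\sigma_{r+1}(g)}{\sigma_r(g)}\cdot\frac{1}{\dang(V, V^-_g)}\]
combines with the Le Page--Guivarc'h large deviation principle, which bounds $\sigma_{r+1}/\sigma_r$ by $e^{-(\lambda_{1,\mu}-\lambda_{r+1,\mu}-\omega/2)n}$ outside a set of $\mu^{*n}$\dash{}probability at most $e^{-cn}$. For the remaining factor we use the identity $V^-_g = (V^+_{g^T})^\perp$, coming from the orthogonality of the $K$-factor in the Cartan decomposition, to rewrite $\dang(V,V^-_g) = \dang(V^+_{g^T}, V^\perp)$, and apply (iii) to the transposed random walk $\mu^T$; the fact that $\mu^T$ satisfies the same hypotheses (strong irreducibility and \eqref{eq:propertyS}) is precisely the equivalence \eqref{eq:imSI} $\Leftrightarrow$ \eqref{eq:kerSI} established in Lemma~\ref{lm:SisZariski}. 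Part (i) is then a formal consequence of (ii) and (iii): for $\omega$ smaller than, say, $(\lambda_{1,\mu}-\lambda_{r+1,\mu})/2$, the high\dash{}probability event of (ii) makes $gV$ so close to $V^+_g$ that $\dang(gV, W) \le e^{-\omega l}$ forces $\dang(V^+_g, W) \le 2 e^{-\omega l}$, at which point (iii) concludes; the case of larger $\omega$ follows trivially by monotonicity of the event in $\omega$.

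I expect the main obstacle to be the verification, at the core of (iii), that $\Gamma$ acts strongly irreducibly on the invariant subspace $U \subset \wedge^r\R^d$ spanned by the Plücker image of $\Lms_\Gamma$. A priori $\wedge^r\R^d$ need not be $\Gamma$-irreducible, and a linear proper subspace of $U$ invariant under a finite\dash{}index subgroup of $\Gamma$ does not in general correspond to a Schubert condition on $\Gr(r,d)$, so Lemma~\ref{lm:Sforfindex}\ref{it:Sforf3} does not apply mechanically. One has to exploit the specific form of elements of $U$ as limits of linear combinations of Plücker vectors of $\Lms_\Gamma$, together with the self\dash{}duality between \eqref{eq:imSI} and \eqref{eq:kerSI}, to exclude such invariant subspaces. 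Once the strongly\dash{}irreducible proximal setting on $U$ is in hand, the non\dash{}concentration machinery of BFLM transfers cleanly.
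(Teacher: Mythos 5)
Your $U$ is exactly the paper's $\Lambda_+ = \sum_{\pi \in \Pi_\Gamma} \im(\wedge^r \pi)$: the Plücker image of $\Lms_\Gamma$ is the collection of lines $\im(\wedge^r\pi)$, $\pi \in \Pi_\Gamma$, and their span is already $\Gamma$-invariant. You are right that this subspace is the correct home for the proximal non-concentration estimate. However, the fact that $\Gamma$ acts \emph{strongly irreducibly and proximally} on $\Lambda_+$ is a standard result in random matrix products (Bougerol, Theorem V.6.2 of \cite{BougerolLacroix}; Benoist--Quint \cite[Lemma 4.36]{BenoistQuint}; Breuillard \cite[Lemma 3.2]{Breuillard}) and holds assuming only strong irreducibility on $\R^d$ --- property~\eqref{eq:propertyS} plays no role here. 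So what you flag as ``the main obstacle'' is no obstacle at all; and, as you yourself suspect, Lemma~\ref{lm:Sforfindex}\ref{it:Sforf3} cannot produce it, since a proper invariant subspace of $\wedge^r\R^d$ does not cut $\Gr(r,d)$ in a Schubert variety. You should simply cite the known fact.

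The genuine gap is in your proof of~\ref{it:dangV+W}. Theorem~\ref{thm:LargeD}\ref{it:LargeDmc} controls $\abs{f(gx)}/(\norm{gx}\norm f)$ for a \emph{fixed} nonzero vector $x$, i.e.\ the angle between $gx$ and a fixed hyperplane. Your target quantity is $\dang(V^+_g,W)$, and the Plücker vector of $V^+_g$ is the \emph{top singular direction} of $\wedge^r g$, not $(\wedge^r g)\ub$ for any fixed $\ub$. Passing from $(\wedge^r g)\ub$ to the top singular direction is precisely what part~\ref{it:danggVV+} accomplishes (via a Cartan estimate $\dH(gV,V^+_g)\le r\sigma_1(g)^{r-1}\sigma_{r+1}(g)/\norm{(\wedge^r g)\vb}$ and singular-value large deviations), so your chain~\ref{it:dangV+W}~$\Rightarrow$~\ref{it:danggVV+}~$\Rightarrow$~\ref{it:danggVW} is circular: the step you use to prove~\ref{it:dangV+W} already presupposes~\ref{it:danggVV+}. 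The paper resolves this by going in the opposite order: \ref{it:danggVW} is proved directly by applying Theorem~\ref{thm:LargeD}\ref{it:LargeDmc} on $\Lambda_+$ to the fixed Plücker vector $\vb_+$ of $V$ and the fixed linear form $f_\wb$ (here \eqref{eq:propertyS} is used twice, to guarantee both $\vb_+\ne 0$ via \eqref{eq:kerSI} and $f_\wb\ne 0$ via \eqref{eq:imSI}, with compactness of the Grassmannians providing uniform lower bounds on their norms); \ref{it:danggVV+} is then obtained from the Cartan estimate; and \ref{it:dangV+W} follows from \ref{it:danggVW} and \ref{it:danggVV+} by the Lipschitz bound $\abs{\dang(V,W)-\dang(V',W)}\le 2r\,\dH(V,V')$. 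Your reductions of \ref{it:danggVV+} and \ref{it:danggVW} to \ref{it:dangV+W} (using transposition and \eqref{eq:imSI}$\Leftrightarrow$\eqref{eq:kerSI}) are sound in themselves, but you need an independent proof of \ref{it:danggVW} (or of \ref{it:danggVV+}) to break the circle, and the natural one is the paper's proof of \ref{it:danggVW}.
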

      

\subsection{Proof of Proposition~\ref{pr:RWonGr}}
The main tool we need is the following large deviation estimates about random matrix products. The first of such result, due to Le Page~\cite{LePage}, is the item~\ref{it:LargeDnc} and the case $k=1$ of the item~\ref{it:LargeDsv} assuming $\Gamma$ strongly irreducible and proximal. Bougerol~\cite[Theorem V.6.2]{BougerolLacroix} extended these to the case where $\Gamma$ is only assumed to be strongly irreducible. The item~\ref{it:LargeDsv} for $k = 1$ appeared in \cite[Theorem 3.4]{Breuillard}. It can be obtained by applying Bougerol's result to each of the irreducible subrepresentations and then using an estimate due to Aoun~\cite[Lemma 2.4.43]{Aoun_these} about return time to finite index subgroups. The full generality of the item~\ref{it:LargeDsv} follows then by applying \cite[Theorem 3.4]{Breuillard} to each of the representations $\wedge^k \R^d$. The item~\ref{it:LargeDmc} is essentially proved in \cite[Theorem 4.4, Lemma 4.5]{BFLM} although there it is formulated differently. In its formulation below, the item~\ref{it:LargeDmc} is \cite[Lemma 14.11]{BenoistQuint}.
\begin{thm}[Large deviation estimates]\label{thm:LargeD}
Let $\mu$ be a Borel probability measure on $\GL_d(\R)$ with finite exponential moment. 
For any $\omega > 0$, there is $c > 0$, $l_0 > 0$ such that for all $n \geq l \geq l_0$, the following holds.
\begin{enumerate}
\item \label{it:LargeDsv} If the action of $\Gamma$ on $\R^d$ is completely reducible, then for all $k = 1, \dotsc, d$,
\[\mu^{*n} \bigl\{ g \in \Gamma \mid \abs{ \frac{1}{n}\log \sigma_k(g) - \lambda_{k,\mu}} \geq \omega \bigr\} \leq e^{-cn}.\]
\item \label{it:LargeDnc} If the action of $\Gamma$ on $\R^d$ is strongly irreducible, then for all nonzero vectors $x \in \R^d$,
\[\mu^{*n} \bigl\{ g \in \Gamma \mid \abs{ \frac{1}{n}\log \frac{\norm{gx}}{\norm{x}} - \lambda_{1,\mu}} \geq \omega \bigr\} \leq e^{-cn}.\]
\item \label{it:LargeDmc} If the action of $\Gamma$ on $\R^d$ is strongly irreducible and proximal then for all nonzero vectors $x \in \R^d$ and all nonzero linear forms $f \in (\R^d)^*$,
\[\mu^{*n} \bigl\{g \in \Gamma \mid \abs{f(gx)}\leq e^{-\omega l}\norm{gx}\,\norm{f}\bigr\} \leq e^{-cl}.\]
\end{enumerate}
\end{thm}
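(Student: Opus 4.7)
The plan is to assemble the three items from the results cited in the paragraph above, handling them in the order (ii), (i), (iii). For (ii), Bougerol's large deviation theorem \cite[Theorem V.6.2]{BougerolLacroix} applies directly: under strong irreducibility of $\Gamma$ and the finite exponential moment of $\mu$, the multiplicative cocycle $\sigma(g,\bar x) = \log \norm{gx}/\norm{x}$ on $\GL_d(\R) \times \mathbb{P}(\R^d)$ satisfies a Cramér-type exponential concentration around its mean, which equals $\lambda_{1,\mu}$ by Furstenberg's formula. This gives (ii) at once.

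For (i) I would first treat $k = 1$. Decompose $\R^d = V_1 \oplus \dotsb \oplus V_s$ into $\Gamma$-irreducible subrepresentations. Since irreducibility on each $V_i$ is strictly weaker than strong irreducibility, pass to a finite-index subsemigroup $\Gamma_0 \subset \Gamma$ for which the Zariski closure of the image in each $\mathrm{GL}(V_i)$ is Zariski connected; then $\Gamma_0$ acts strongly irreducibly on every $V_i$, so (ii) applies on each factor. Transfer back to $\mu$ via Aoun's exponential bound on the return time to $\Gamma_0$ along the random walk \cite[Lemma 2.4.43]{Aoun_these}: the $\mu^{*n}$-distribution reduces, up to a bounded-length correction, to the strongly irreducible walk on some $V_i$, and the maximum of the factor-wise top Lyapunov exponents recovers $\lambda_{1,\mu}$. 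For general $k$, apply the $k = 1$ case to the exterior power $\wedge^k \R^d$, which is again completely reducible (the Zariski closure of $\Gamma$ being reductive in characteristic zero); the identities $\log \sigma_1(\wedge^k g) = \sum_{i=1}^k \log \sigma_i(g)$ and $\lambda_{1, \wedge^k \mu} = \lambda_{1,\mu} + \dotsb + \lambda_{k,\mu}$ let one isolate $\sigma_k(g)$ by subtracting the estimates for $\wedge^k \R^d$ and $\wedge^{k-1} \R^d$.

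For (iii), the quantity $|f(gx)|/(\norm{f}\,\norm{gx})$ is the sine of the angle between $\overline{gx} \in \mathbb{P}(\R^d)$ and the projective hyperplane $\mathbb{P}(\ker f)$, so the probability in question is the $\mu^{*n}$-mass of the event $\{\overline{gx} \in \Nbd_{e^{-\omega l}}(\mathbb{P}(\ker f))\}$. Under strong irreducibility and proximality, Furstenberg's theorem gives a unique $\mu$-stationary measure $\nu$ on $\mathbb{P}(\R^d)$, and Guivarc'h's Hölder regularity \cite{Guivarch} yields $\nu(\Nbd_\epsilon(\mathbb{P}(\ker f))) \leq C \epsilon^\alpha$ uniformly in $f$ for some $\alpha > 0$. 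Writing $P$ for the Markov operator of $\mu$ acting on Hölder functions on $\mathbb{P}(\R^d)$, the probability equals $(P^n \psi_l)(\bar x)$ for $\psi_l$ a suitable smoothing of the indicator of the bad neighborhood; combining the Hölder bound on $\int \psi_l \, d\nu$ with the spectral gap of $P$ and balancing the Hölder norm of $\psi_l$ against this gap produces the $e^{-cl}$ estimate uniformly for $n \geq l$. This is the content of \cite[Lemma 4.5]{BFLM} and its reformulation \cite[Lemma 14.11]{BenoistQuint}.

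The main obstacle is the input to (iii): establishing Guivarc'h's Hölder regularity of $\nu$. The standard proof shows that the potential $\phi_\alpha(\bar x) = d(\bar x, \mathbb{P}(\ker f))^{-\alpha}$ is contracted by $P$ for sufficiently small $\alpha > 0$, a contraction that ultimately rests on the positive Lyapunov gap $\lambda_{1,\mu} > \lambda_{2,\mu}$ available under proximality. A secondary technical point, inside (i), is that strong irreducibility does not descend to irreducible subrepresentations, so the finite-index reduction through Aoun is essential and requires the return-time tail bound to be uniform in $n$. Once these two ingredients are in hand, the remaining reductions are routine.
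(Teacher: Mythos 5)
Your proposal reproduces the intended argument: the paper does not prove Theorem~\ref{thm:LargeD} from scratch but assembles it from the same references in the same way (Bougerol for \ref{it:LargeDnc}; decomposition into irreducibles plus Aoun's return-time estimate for \ref{it:LargeDsv} at $k=1$, then exterior powers for general $k$; and the Furstenberg/Guivarc'h regularity-plus-spectral-gap machinery of \cite{BFLM} and \cite{BenoistQuint} for \ref{it:LargeDmc}). The one detail your sketch for \ref{it:LargeDmc} elides is that for $\omega$ large the Hölder norm of the cut-off $\psi_l$ grows faster than the spectral gap contracts, so one must first replace $\omega$ by a smaller $\omega'$ depending on the gap, using the obvious monotonicity of the event in $\omega$; this is standard and does not affect the conclusion.
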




The main idea in proving Proposition~\ref{pr:RWonGr} is to apply Theorem~\ref{thm:LargeD}\ref{it:LargeDmc} to the representation $\wedge^r\R^d$. The issue is : although the representation $\wedge^r\R^d$ is always proximal, it is not irreducible in general. Thus, we need a decomposition of $\wedge^r\R^d$, which we describe below. This decomposition is first used by Bougerol in~\cite[Theorem V.6.2]{BougerolLacroix}, while Benoist-Quint~\cite[Lemma 4.36]{BenoistQuint} and Breuillard~\cite[Lemma 3.2]{Breuillard} provided more detailed description.

Assume that $\Gamma$ acts strongly irreducibly on $\R^d$. We have  a direct sum decomposition into $\Gamma$-invariant subspaces
\[\wedge^r \R^d = \Lambda_+ \oplus \Lambda_0\]
where
\[\Lambda_+ = \sum_{\pi \in \Pi_\Gamma} \im (\wedge^r \pi)\]
and 
\[\Lambda_0 = \bigcap_{\pi \in \Pi_\Gamma} \ker (\wedge^r \pi).\]

The action of $\Gamma$ on $\Lambda_+$ is strongly irreducible and proximal. Moreover,
\[\forall g \in \Gamma,\quad \norm{g}^r \ll_\Gamma \norm{(\wedge^r g)_{\mid \Lambda_+}} \leq \norm{g}^r.\]
Consequently, the top Lyapunov exponent associated to the random walk on $\Lambda_+$ is $\lambda_{1,\Lambda_+} = r\lambda_{1,\mu}$.
The action of $\Gamma$ on $\Lambda_0$ is totally reducible and the corresponding top Lyapunov exponent satisfies
\begin{equation}
\lambda_{1,\Lambda_0} \leq (r-1) \lambda_{1,\mu} + \lambda_{r+1,\mu}.
\end{equation}

\begin{proof}[Proof of Proposition~\ref{pr:RWonGr}]
For $V \in \Gr(r,d)$, let $\vb \in \wedge^r \R^d$ be the wedge product of an orthonormal basis of $V$. The vector $\vb$ is, up to a sign, uniquely determined by $V$. Similarly, define $\wb \in \wedge^{d-r}\R^d$ for $W \in \Gr(d-r,d)$. We have
\[\dang(gV,W) = \frac{\norm{(\wedge^r g) \vb \wedge \wb}}{\norm{(\wedge^r g) \vb}}.\]

Write $\vb$ as $\vb = \vb_+ + \vb_0$ with $\vb_+ \in \Lambda_+$ and $\vb_0 \in \Lambda_0$. Write also $f_\wb \colon \Lambda_+ \to \wedge^d \R^d$ for the linear form $\ub \mapsto \ub \wedge \wb$ restricted to $\Lambda_+$.
We claim that under the assumption of \eqref{eq:propertyS}, there exists a constant $c > 0$ depending only on $\Gamma$ such that $\norm{\vb_+} \geq c$ and $\norm{f_\wb} \geq c$.

Indeed, by the definition of $\Lambda_0$, $\vb_+ = 0$ if and only if $\forall \pi \in \Pi_\Gamma$, $\vb \in \ker (\wedge^r \pi)$, if and only if $\forall \pi \in \Pi_\Gamma$, $V \cap \ker \pi \neq \{0\}$.
Thus, \eqref{eq:kerSI} implies $\vb_+ \neq 0$. Observe that the map $\Gr(r,d) \to \R$, $V \mapsto \norm{\vb_+}$ is well-defined and continuous. The compactness of $\Gr(r,d)$ then implies 
\[\inf_{V \in \Gr(r,d)} \norm{\vb_+} > 0.\]
Similaily, by the definition of $\Lambda_+$, $f_\wb = 0$ if and only if $\forall \pi \in \Pi_\Gamma$, $f_\wb(\im(\wedge^r \pi)) = \{0\}$, if and only if $\forall \pi \in \Pi_\Gamma$, $\im \pi \cap W \neq \{0\}$.
Thus, \eqref{eq:imSI} implies $f_\wb \neq 0$. Again from the compactness of $\Gr(d-r,d)$, we conclude 
\[\inf_{W \in \Gr(d-r,d)} \norm{f_\wb} > 0.\]

For any $g \in \Gamma$, we have 
\begin{equation}
\label{eq:rgvb}
(\wedge^r g) \vb = (\wedge^r g) \vb_+ + (\wedge^r g) \vb_0
\end{equation}
and
\begin{equation}
\label{eq:rgvbwb}
 (\wedge^r g) \vb \wedge \wb = (\wedge^r g) \vb_+ \wedge \wb + (\wedge^r g) \vb_0 \wedge \wb.
\end{equation}
We can bound from above the second terms in both right-hand sides:
\begin{equation}
\label{eq:rgvb0wb}
\norm{(\wedge^r g) \vb_0 \wedge \wb} \leq \norm{(\wedge^r g) \vb_0 } \ll_\Gamma \norm{(\wedge^r g)_{\mid \Lambda_0}}.
\end{equation}

Let $0 < \omega < \frac{\lambda_{1,\mu} - \lambda_{r+1,\mu}}{3}$. 
In the argument below, $c > 0$ and $l_0 > 0$ will denote the constants given by Theorem~\ref{thm:LargeD} when applied to random walks induced by $\mu$ on $\R^d$ and on $\Lambda_+$.
Let $n \geq l \geq l_0$ as in the statement of Proposition~\ref{pr:RWonGr}.
Let $g$ denote a random variable distributed according to $\mu^{*n}$.

As discussed above, the action of $\Gamma$ on $\Lambda_+$ is strongly irreducible and proximal. Thus we can apply Theorem~\ref{thm:LargeD}\ref{it:LargeDnc} and Theorem~\ref{thm:LargeD}\ref{it:LargeDmc} to the random walk on $\Lambda_+$ induced by $\mu$. We obtain that with probability at least $1 - e^{-cn}$,
\[\norm{(\wedge^r g)\vb_+} \geq e^{(r\lambda_{1,\mu}-\omega)n}\norm{\vb_+} \gg_\Gamma e^{(r\lambda_{1,\mu}-\omega)n},\]
and with probability at least $1 - e^{-cl}$,
\[\norm{(\wedge^r g) \vb_+ \wedge \wb} \geq e^{-\omega l} \norm{f_\wb} \norm{(\wedge^r g)\vb_+} \gg_\Gamma e^{-\omega l}\norm{(\wedge^r g)\vb_+}.\]
The action of $\Gamma$ on $\Lambda_0$ is totally reducible. Applying Theorem~\ref{thm:LargeD}\ref{it:LargeDsv} with $k=1$ to the associated random walk, we have, with probability at least $1 - e^{cn}$,
\[\norm{(\wedge^r g)_{\mid \Lambda_0}} \leq e^{((r-1)\lambda_{1,\mu} + \lambda_{r+1,\mu} + \omega)n}.\]

When all these happen we will have $\norm{(\wedge^r g) \vb_0 } \leq \norm{(\wedge^r g) \vb_+}$ and $\norm{(\wedge^r g) \vb_0 \wedge \wb} \leq \frac{1}{2} \norm{(\wedge^r g) \vb_+ \wedge \wb}$. These inequalities combined with \eqref{eq:rgvb}, \eqref{eq:rgvbwb} and \eqref{eq:rgvb0wb} yield $\norm{(\wedge^r g) \vb} \leq 2 \norm{(\wedge^r g) \vb_+}$ and  $\norm{(\wedge^r g) \vb \wedge \wb} \geq \frac{1}{2}\norm{(\wedge^r g) \vb_+ \wedge \wb}$.

Hence, with probability greater than $1 - e^{-cl}$,
\[\dang(gV,W) \gg  \frac{\norm{(\wedge^r g) \vb_+ \wedge \wb}}{\norm{(\wedge^r g) \vb_+}} \gg_\Gamma e^{-\omega l}.\]
This finishes the proof of \ref{it:danggVW}. 

In order to prove \ref{it:danggVV+}, we use the following fact.
\begin{lemm}
For any $g \in \GL(\R^d)$ and any $V \in \Gr(r,d)$, 
\begin{equation}
\label{eq:gVV+close}
\dH(gV,V^+_g) \leq \frac{r\sigma_1(g)^{r-1}\sigma_{r+1}(g)}{\norm{(\wedge^r g)\vb}}.
\end{equation}
\end{lemm}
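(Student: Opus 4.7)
The plan is to reduce the inequality to the case where $g$ is diagonal via the Cartan decomposition, and then estimate the distance of a unit vector of $gV$ to $V^+_g$ (an upper bound driven by $\sigma_{r+1}$) and the length $\norm{gv}$ for unit $v \in V$ (a lower bound obtained from $\norm{(\wedge^r g)\vb}$) separately.

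First, I would write $g = kDl$ with $k,l \in \grO(d)$ and $D = \diag(\sigma_1(g),\dotsc,\sigma_d(g))$. The distance $\dH$ is invariant under an isometry applied simultaneously to both its arguments, and the quantity $\norm{(\wedge^r g)\vb}$ is unchanged when one replaces $g$ by $k^{-1}gl^{-1} = D$ and $V$ by $lV$, because $\wedge^r k$ and $\wedge^r l$ are isometries of $\wedge^r \R^d$. After this reduction one may therefore assume $g = D$ and $V^+_g = E := \Span(e_1,\dotsc,e_r)$.

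Second, any unit vector of $gV$ has the form $u = gv/\norm{gv}$ for some unit $v = \sum v_i e_i \in V$. The orthogonal projection of $gv$ onto $E$ is $\sum_{i \leq r}\sigma_i v_i e_i$, so the residual has norm $\bigl(\sum_{i > r}\sigma_i^2 v_i^2\bigr)^{1/2} \leq \sigma_{r+1}$, giving
\[d(u, E) \;\leq\; \frac{\sigma_{r+1}}{\norm{gv}}.\]
To bound $\norm{gv}$ from below, I complete $v$ to an orthonormal basis $(v,v_2,\dotsc,v_r)$ of $V$, so that $\vb = \pm v \wedge v_2 \wedge \dotsb \wedge v_r$. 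Then
\[\norm{(\wedge^r g)\vb} \;=\; \norm{gv \wedge gv_2 \wedge \dotsb \wedge gv_r} \;\leq\; \norm{gv}\prod_{i=2}^r \norm{gv_i} \;\leq\; \norm{gv}\,\sigma_1^{r-1},\]
whence $\norm{gv} \geq \norm{(\wedge^r g)\vb}/\sigma_1^{r-1}$. Substituting and taking the supremum over unit $u \in gV$ yields
\[\dH(gV,V^+_g) \;\leq\; \frac{\sigma_1^{r-1}\sigma_{r+1}}{\norm{(\wedge^r g)\vb}},\]
which is in fact a factor of $r$ stronger than the claim.

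There is no real obstacle: the argument is a short computation once one is in diagonal coordinates. The only step that requires care is the bookkeeping in the first paragraph, namely checking that both sides of the inequality are preserved by the Cartan change of variables; but this is immediate from the isometry properties of $\wedge^r k$ and $\wedge^r l$.
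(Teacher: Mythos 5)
Your proof is correct, and it is essentially the same circle of ideas as the paper's: bound the component of $gv$ orthogonal to $V^+_g$ by $\sigma_{r+1}(g)$, and use the wedge product $\norm{(\wedge^r g)\vb}$ to control the denominator. The difference is in how you choose the orthonormal basis of $V$. The paper fixes an arbitrary orthonormal basis $(v_1,\dotsc,v_r)$, writes a general unit $u\in gV$ as $\sum_i\alpha_i gv_i$, bounds each $\abs{\alpha_i}$ by $\sigma_1(g)^{r-1}/\norm{(\wedge^r g)\vb}$ via wedging against the remaining $gv_j$, and then sums the $r$ contributions, which is where the factor $r$ comes from. You instead take the unit vector $v$ with $u = gv/\norm{gv}$ as the \emph{first} basis vector and complete it to an orthonormal basis of $V$, then bound $\norm{gv}$ from below by $\norm{(\wedge^r g)\vb}/\sigma_1(g)^{r-1}$; no triangle inequality over $r$ terms is needed, so the factor $r$ disappears and you get the slightly sharper bound $\dH(gV,V^+_g)\leq \sigma_1(g)^{r-1}\sigma_{r+1}(g)/\norm{(\wedge^r g)\vb}$, which of course still implies the stated inequality. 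The initial reduction to the diagonal case via the Cartan decomposition is sound (both $\dH$ and the wedge-norm are invariant under the two orthogonal factors) and replaces the paper's direct use of the decomposition $v_i = v_i^+ + v_i^-$ relative to $V^-_g$; this is a matter of presentation only.
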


\begin{proof}
Let $(v_1,\dotsc,v_r)$ be an orthonormal basis and $\vb = v_1 \wedge \dotsm \wedge v_r$.

Write each $v_i$ as $v_i = v^+_i + v^-_i$ with $v^+_i \in (V^-_g)^\perp$ and $v^-_i \in V^-_g$. We have
\[d(gv_i, V^+_g) = \norm{gv^-_i} \leq \sigma_{r+1}(g)\norm{v^-_i} \leq \sigma_{r+1}(g).\]
Now let $u \in gV$, there is $(\alpha_i) \in \R^d$ such that
\[u = \sum_{i=1}^r \alpha_i gv_i.\]
Taking the wedge product of $u$ with all the $gv_i$ except one, we obtain $\forall i = 1,\dotsc,r$,
\[\abs{\alpha_i} \norm{(\wedge^r g)\vb} \leq \norm{g}^{r-1}\norm{u}.\]
From the above follows that $\forall u \in gV$,
\[d(u,V^+_g) \leq \frac{r\sigma_1(g)^{r-1}\sigma_{r+1}(g)}{\norm{(\wedge^r g)\vb}} \norm{u}.\]
This proves the lemma.
\end{proof}

Now using large deviation estimates, we can control each of the terms appearing in the right hand side of \eqref{eq:gVV+close}. By Theorem~\ref{thm:LargeD}\ref{it:LargeDsv},
\[ \sigma_1(g)^{r-1}\sigma_{r+1}(g) \leq e^{((r-1)\lambda_{1,\mu} + \lambda_{r+1,\mu} +\omega)n}\]
with probability at least $1 - e^{-cn}$.
From the first part of this proof,
\begin{equation}
\norm{(\wedge^r g)\vb} \geq \norm{(\wedge^r g)\vb_+} - \norm{(\wedge^r g)_{\mid \Lambda_0}} \geq e^{(r\lambda_{1,\mu} - \omega)n}
\end{equation}
with probability at least $1 - e^{-cn}$. Thus \ref{it:danggVV+} follows from these inequalities and the lemma.

Finally, \ref{it:dangV+W} follows immediately from \ref{it:danggVW}, \ref{it:danggVV+} and following triangular inequality. For all $V,V' \in \Gr(r,d)$ and all $W \in \Gr(d-r,d)$,
\[\abs{\dang(V,W) - \dang(V',W)} \leq 2r \dH(V,V').\]
The proof of this inequality is straightforward and omitted.
\end{proof}

\subsection{Stationary measure}
The remainder of this section is irrelevant to the main result (Theorem~\ref{thm:main}) of this article. We will provide an interpretation of Proposition~\ref{pr:RWonGr} in terms of stationary measure in Corollary~\ref{cr:regularity}.
In the meanwhile, we show in Proposition~\ref{pr:uniquenu} the uniqueness of stationary measure on the Grassmannian $\Gr(r_\Gamma, d)$ under the assumption~\eqref{eq:propertyS}, just like for proximal groups.

We use the notation introduced in the beginning of this section. Recall that a Borel probability measure $\nu$ on $\Gr(r,d)$ is said to be $\mu$-stationary if $\mu * \nu = \nu$. 

\begin{prop}
\label{pr:uniquenu}
Assume that $\Gamma$ acts strongly irreducibly on $\R^d$ and satisfies \eqref{eq:propertyS}. Then there is a unique $\mu$\dash{}stationary Borel probability measure on $\Gr(r,d)$.
\end{prop}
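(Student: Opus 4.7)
The plan is to establish uniqueness by showing that every $\mu$-stationary measure on $\Gr(r,d)$ is determined by the asymptotic distribution of the ``exterior'' subspace $V^+_g$ under $\mu^{*n}$. Existence of some stationary probability is standard via Markov--Kakutani on the compact space $\Gr(r,d)$, so the real task is uniqueness, and the decisive ingredient is Proposition~\ref{pr:RWonGr}\ref{it:danggVV+}: for every starting point $V \in \Gr(r,d)$ and $g$ drawn from $\mu^{*n}$, one has $\dH(gV, V^+_g) \leq e^{-\eta n}$ with probability at least $1 - e^{-cn}$, where $\eta = \lambda_{1,\mu} - \lambda_{r+1,\mu} - \omega > 0$ for any small $\omega$. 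This is the one and only place where the hypothesis \eqref{eq:propertyS} enters the argument.

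Concretely, I would fix a $\dH$-Lipschitz test function $F \colon \Gr(r,d) \to \R$ and any $\mu$-stationary probability $\nu$, and exploit $\mu^{*n}*\nu = \nu$ to write $\int F \dd\nu = \iint F(gV) \dd\mu^{*n}(g) \dd\nu(V)$. On the event from Proposition~\ref{pr:RWonGr}\ref{it:danggVV+} the integrand differs from $F(V^+_g)$ by at most $\norm{F}_{\mathrm{Lip}} e^{-\eta n}$, and on the complementary event by at most $2\norm{F}_\infty$. Integrating out the $V$ variable yields the uniform bound
\[\abse{\int F \dd\nu - \int F(V^+_g) \dd\mu^{*n}(g)} \leq \norm{F}_{\mathrm{Lip}} e^{-\eta n} + 2\norm{F}_\infty e^{-cn}.\]

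The decisive feature is that the auxiliary expression $\int F(V^+_g) \dd\mu^{*n}(g)$ does not depend on $\nu$. Hence if $\nu$ and $\nu'$ are both $\mu$-stationary, $\int F \dd\nu - \int F \dd\nu'$ is bounded by a quantity that tends to zero yet is itself independent of $n$, so it must vanish; density of Lipschitz functions in $C(\Gr(r,d))$ then forces $\nu = \nu'$. I do not foresee a genuine obstacle: the only subtlety is that $V^+_g$ is not canonically defined when $\sigma_r(g) = \sigma_{r+1}(g)$, which is dealt with by the measurable selection discussed just before Proposition~\ref{pr:RWonGr} and which in any case occurs on an event of $\mu^{*n}$-probability at most $e^{-cn}$ by Theorem~\ref{thm:LargeD}\ref{it:LargeDsv} combined with the spectral gap $\lambda_{r,\mu} > \lambda_{r+1,\mu}$.
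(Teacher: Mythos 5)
Your argument is correct as far as it goes, and it is a genuinely different route from the paper's. You deduce uniqueness by pushing a stationary measure through the large-deviation estimate of Proposition~\ref{pr:RWonGr}\ref{it:danggVV+}, comparing $\int F\,\mathrm{d}\nu$ with the $\nu$-independent quantity $\int F(V^+_g)\,\mathrm{d}\mu^{*n}(g)$. The paper instead runs Furstenberg's soft boundary argument: it builds the limit measures $b\mapsto\nu_b$ and the boundary map $\xi\colon B\to\Gr(r,d)$, uses the property~\eqref{eq:propertyS} via Lemma~\ref{lm:nuSch=0} to show $\nu$ gives zero mass to every Schubert variety, and then proves $\nu_b=\delta_{\xi(b)}$ almost surely.

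However, there is a hypothesis mismatch that makes your proof fall short of the stated proposition. Proposition~\ref{pr:uniquenu} does \emph{not} assume that $\mu$ has finite exponential moment (compare its hypotheses with those of Proposition~\ref{pr:RWonGr}, which explicitly does). Your argument leans entirely on Proposition~\ref{pr:RWonGr}\ref{it:danggVV+}, and that estimate — indeed the whole large-deviation machinery behind Theorem~\ref{thm:LargeD} — genuinely needs the exponential moment. So what you prove is the weaker statement ``under an additional finite exponential moment hypothesis, the stationary measure is unique.'' The paper's boundary-theory route avoids this because the convergence $(b_1\dotsm b_n)_*\nu\to\nu_b$ comes from martingale convergence and the structure of accumulation points of $\norm{b_1\dotsm b_n}^{-1}b_1\dotsm b_n$, neither of which requires moment conditions; this is why Furstenberg's original proximal result \cite{Furstenberg1973} is moment-free, and the proposition is deliberately stated in that same generality. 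If you want your approach to match the statement, you would either have to add the moment hypothesis, or replace the quantitative large-deviation input by a qualitative almost-sure contraction statement — which brings you back essentially to the boundary-map argument.
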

For the proximal case, i.e. $r=1$, this is a classical result due to Furstenberg~\cite{Furstenberg1973}. 

We have seen in Lemma~\ref{lm:LimSI} that, under the assumption~\eqref{eq:propertyS}, the limit set $\Lms_\Gamma$ is the unique minimal $\Gamma$-invariant subset of $\Gr(r,d)$. By \cite[Remark 10.5]{BenoistQuint}, it follows that there is a unique $\mu$\dash{}stationary probability measure supported on $\Lms_\Gamma$. The proposition asserts that it is indeed the only one on $\Gr(r,d)$. The proof is similar to that of Furstenberg's result (cf. \cite[Proposition 4.7]{BenoistQuint}). In particular, we make use of the limit measures and the boundary maps constructed by Furstenberg~\cite{Furstenberg1963}. 

\begin{proof}
Let $(B,\mathcal{B},\beta)$ denote the probability space with $B = \Gamma^{\N^*}$, $\mathcal{B}$ being the product $\sigma$-algebra of the Borel $\sigma$-algebra on $\Gamma$ and $\beta = \mu^{\otimes {\N^*}}$ being the product measure. 
Let $\mathcal{P}(\Gr(r,d))$ denote the space of Borel probability measures on $\Gr(r,d)$ endowed with the weak-$*$ topology. Let $\nu \in \mathcal{P}(\Gr(r,d))$ be a $\mu$-stationary measure. 

Recall the definition and properties of the limit measures, cf. \cite[Lemma 2.17, Lemma 2.19]{BenoistQuint}. 
There exists a Borel map $b \mapsto \nu_b$ from $B$ to $\mathcal{P}(\Gr(r,d))$ such that for $\beta$-almost any $b = (b_n)_{n \geq 1}$ in $B$, one has $(b_1 \dotsm b_n)_*\nu \to \nu_b$ as $n \to +\infty$.
Moreover,
\begin{equation}
\label{eq:nuInub}
\nu = \int_B\nu_b \dd \beta(b).    
\end{equation}

Recall the definition of Furstenberg boundary map. There exists a Borel map $\xi \colon B \to \Gr(r,d)$ such that for $\beta$-almost any $b = (b_n)_{n \geq 1}$ in $B$, every nonzero accumulation point $f \in \End(\R^d)$ of a sequence $\lambda_n b_1 \dotsm b_n$ with $\lambda_n \in \R$ has rank $r$ and admits $\xi(b)$ as its image. 

We claim that for $\beta$-almost $b \in B$, the limit measure $\nu_b$ is the Dirac mass at $\xi(b)$. In view of \eqref{eq:nuInub}, the uniqueness of $\nu$ follows. 

Indeed, for $b = (b_n)_{n \geq 1} \in B$, let $\pi_b \in \End(\R^d)$ be an accumulation point of the sequence $\lambda_n b_1 \dotsm b_n$ where $\lambda_n = \norm{b_1 \dotsm b_n}^{-1}$. For $\beta$-almost all $b$, $\pi_b$ has rank $r$ and $\im(\pi_b) = \xi(b)$. 
Let $W_b = \ker \pi_b$.
By \cite[Lemma 10.16]{BenoistQuint}, there is a constant $c > 0$ depending only on $\Gamma$ such that $\sigma_r(\pi_b) \geq c \sigma_1(\pi_b)= c$. Thus from the Cartan decomposition of $\pi_b$, we see that
\begin{equation}
\label{eq:pibvdang}
\forall v \in \R^d \setminus \{0\},\; \norm{\pi_b v} \geq c \dang(\bar{v},W_b) \norm{v}.
\end{equation}

Define $\epsilon_n = \norm{\lambda_n{b_1 \dotsm b_n} - \pi_b }^{1/2}$
and $\rho_n = (\epsilon_n + \epsilon_n^2)/c$ so that both $\epsilon_n \to 0$ and $\rho_n \to 0$ along a subsequence.
For $V \in \Gr(r,d)$ such that $\dang(V,W_b) \geq \rho_n$, we have, $\forall v \in V$,
\[\norm{\lambda_n{b_1 \dotsm b_n}v - \pi_b v }\leq \epsilon_n^2 \norm{v}\]
and by~\eqref{eq:pibvdang}
\[\norm{\pi_b v} \geq c \rho_n \norm{v}\]
Hence 
\[\norm{\lambda_n{b_1 \dotsm b_n} v} \geq (c\rho_n - \epsilon_n^2) \norm{v} = \epsilon_n \norm{v} \]
and 
\[d \bigl({b_1 \dotsm b_n}v, \xi(b)\bigr) \leq \epsilon_n \norm{{b_1 \dotsm b_n}v}.\]
Therefore,
\[\dH \bigl({b_1 \dotsm b_n}V, \xi(b)\bigr) \leq \epsilon_n.\]
What we have shown is
\begin{multline}
\label{eq:limitnuDirac}
((b_1 \dotsm b_n)_*\nu )\bigl\{V \in \Gr(r,d) \mid \dH(V,\xi(b)) \leq \epsilon_n \bigr\} \\
\geq \nu\{V \in \Gr(r,d) \mid \dang(V,W_b) \geq \rho_n\}.
\end{multline}

By Lemma~\ref{lm:nuSch=0} below, we have
\[\nu \bigl\{ V \in \Gr(r,d) \mid  \dang(V,W_b) = 0 \bigr\} = 0.\]
Hence 
\[\nu \bigl\{ V \in \Gr(r,d) \mid  \dang(V,W_b) \leq \rho \bigr\} \to 0 \quad \text{as } \rho \to 0.\]

Let $n$ goes to $+\infty$ in \eqref{eq:limitnuDirac}, we obtain $\forall \epsilon > 0$
\[\nu_b \bigl\{V \in \Gr(r,d) \mid \dH(V,\xi(b)) \leq \epsilon \bigr\} = 1,\]
showing that $\nu_b$ is the Dirac mass at $\xi(b)$.
\end{proof}

Note that the claim implies that this $\mu$-stationary measure on $\Gr(r,d)$ is $\mu$-proximal (for the definition of this notion see \cite[Section 2.7]{BenoistQuint}).

In the proof above we used the following lemma.
\begin{lemm}
\label{lm:nuSch=0}
Assume that $\Gamma$ acts strongly irreducibly on $\R^d$ and satisfies \eqref{eq:propertyS}.
Let $\nu$ be a $\mu$\dash{}stationary probability measure on $\Gr(r,d)$.
Then for any $W \in \Gr(d - r,d)$,
\[\nu \bigl\{ V \in \Gr(r,d) \mid V \cap W \neq \{0\} \bigr\} = 0.\]
\end{lemm}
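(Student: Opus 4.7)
The plan is to combine the non-concentration estimate of Proposition~\ref{pr:RWonGr}\ref{it:danggVW} with the $\mu$-stationarity of $\nu$ in a direct Fubini-and-pass-to-the-limit argument. The key observation is that the target Schubert variety $\{V \in \Gr(r,d) \mid V \cap W \neq \{0\}\}$ coincides with the vanishing locus $\{V \mid \dang(V,W) = 0\}$, and is therefore contained in the shrinking neighborhood
\[E_\rho := \bigl\{V \in \Gr(r,d) \mid \dang(V,W) \leq \rho\bigr\}\]
for every $\rho > 0$. It then suffices to show that $\nu(E_\rho) \to 0$ as $\rho \to 0^+$ along a suitable sequence.

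Fix $\omega > 0$ and let $c > 0$, $l_0 \geq 1$ be the constants produced by Proposition~\ref{pr:RWonGr}\ref{it:danggVW} for this $\omega$. Iterating the stationarity relation $\mu * \nu = \nu$ yields $\mu^{*n} * \nu = \nu$ for every $n \geq 1$. Testing this equality against the indicator of $E_\rho$ and applying Fubini gives
\[\nu(E_\rho) \,=\, \int_{\Gr(r,d)} \mu^{*n}\bigl\{g \in \Gamma \mid \dang(gV,W) \leq \rho\bigr\} \dd\nu(V).\]

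Now specialize $\rho = e^{-\omega n}$ and take $n \geq l_0$, so that the admissible choice $l = n$ is legitimate in Proposition~\ref{pr:RWonGr}\ref{it:danggVW}. The integrand is then bounded by $e^{-c n}$ uniformly in $V$ and $W$, so $\nu(E_{e^{-\omega n}}) \leq e^{-cn}$. Letting $n \to \infty$ produces $\nu\{V \mid V \cap W \neq \{0\}\} = 0$, as desired.

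The only nontrivial ingredient is Proposition~\ref{pr:RWonGr}\ref{it:danggVW} itself, which has already been established in the section under the assumption that $\mu$ has finite exponential moment and that $\Gamma$ acts strongly irreducibly on $\R^d$ and satisfies \eqref{eq:propertyS}; the finite-exponential-moment hypothesis is inherited from the surrounding framework. Once that estimate is in hand, the present lemma is a routine stationarity-plus-Fubini consequence and no further obstacle arises.
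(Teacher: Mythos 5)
Your argument is correct as a chain of implications, but it establishes a strictly weaker result than the lemma claims. The estimate you invoke, Proposition~\ref{pr:RWonGr}\ref{it:danggVW}, explicitly requires $\mu$ to have finite exponential moment. You assert that this hypothesis ``is inherited from the surrounding framework,'' but it is not: Lemma~\ref{lm:nuSch=0} and Proposition~\ref{pr:uniquenu}, in whose proof the lemma is used, are both stated with no moment condition on $\mu$ whatsoever. Proposition~\ref{pr:uniquenu} generalizes Furstenberg's uniqueness theorem for stationary measures, which holds at that level of generality, so the absence of a moment hypothesis there is deliberate and substantive, not incidental. Thus, as written, your proof covers only the case of finite exponential moment and leaves the stated lemma unproved.

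The paper's proof is a purely topological-algebraic argument with no probabilistic input beyond stationarity itself. It works in the coarse ``Schubert'' topology on $\Gr(r,d)$ introduced in the proof of Lemma~\ref{lm:Sforfindex} and runs Furstenberg's minimality argument (cf.\ \cite[Lemma 4.6(b)]{BenoistQuint}): if $\nu$ gave positive mass to a proper closed set, one extracts the finite family $M$ of irreducible closed sets of minimal dimension and maximal $\nu$-mass, shows that $\Supp(\mu)$ permutes $M$, hence that $\sg{\Gamma}$ preserves the finite union $\bigcup_{F\in M}F$ of proper Schubert varieties, contradicting Lemma~\ref{lm:Sforfindex}\ref{it:Sforf3}. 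Your stationarity-plus-large-deviations route is perfectly sound once finite exponential moment is assumed --- and in that setting Corollary~\ref{cr:regularity} already yields the much stronger Hölder bound $\nu\{V \mid \dang(V,W)\leq\rho\}\leq\rho^\kappa$. To prove the lemma in the generality in which it is stated and used, however, you need the moment-free argument.
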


\begin{proof}
We work again in the topology introduced in the proof of Lemma~\ref{lm:Sforfindex}. As for Zariski topology, we can define the dimension of an irreducible closed set as the maximal length of increasing chain of irreducible closed subsets. 

Assume for a contradiction that $\nu$ gives positive mass to a proper closed set.
Let $M$ denote the set of irreducible closed sets of minimal dimension and with maximal $\nu$-measure among irreducible closed sets of minimal dimension. Using an argument of Furstenberg (cf. \cite[Lemma 4.6(b)]{BenoistQuint}), one can prove that $M$ is finite and for all $g \in \Supp(\mu)$ and all $F \in M$, $g^{-1}F \in M$.
From this we deduce that $g^{-1}$ permutes the finite set $M$ and hence so does $g$. 
Thus, $\sg{\Gamma}$, the subgroup generated by $\Gamma$, acts on $M$.
Then, on the one hand, $\sg{\Gamma}$ preserve the finite union $\bigcup_{F \in M} F$.
On the other hand, since $\sg{\Gamma}$ has the same Zariski closure as $\Gamma$, it has property~\eqref{eq:propertyS} by Lemma~\ref{lm:SisZariski}. We get a contradiction by Lemma~\ref{lm:Sforfindex}\ref{it:Sforf3} applied to $\sg{\Gamma}$.
\end{proof}

The following is a Hölder-regularity result for the stationary measure. It is a consequence of Proposition~\ref{pr:RWonGr}.

\begin{coro}[to Proposition~\ref{pr:RWonGr}]
\label{cr:regularity}
Under the same assumptions as in Proposition~\ref{pr:RWonGr}. Let $\nu$ be the $\mu$-stationary Borel probability measure on $\Gr(r,d)$. There exists $\kappa > 0$ such that for all $W \in \Gr(d-r,d)$ and all $\rho > 0$,
\[\nu\{V \in \Gr(r,d) \mid \dang(V,W) \leq \rho \} \leq \rho^\kappa.\]
Moreover,
\begin{equation}
\label{eq:regonGr}
\sup_{W \in \Gr(d-r,d)}  \int_{\Gr(r,d)} \dang(V,W)^{-\kappa} \dd \nu(V) < + \infty. 
\end{equation}
\end{coro}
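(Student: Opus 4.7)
The plan is to deduce both assertions directly from the non-concentration estimate Proposition~\ref{pr:RWonGr}\ref{it:danggVW} by using the $\mu$-stationarity of $\nu$. Namely, by $\mu$-stationarity we have $\nu=\mu^{*n}*\nu$ for every $n\ge 0$, so
\[
\nu\{V\in\Gr(r,d)\mid \dang(V,W)\leq \rho\}
=\int_{\Gr(r,d)}\mu^{*n}\bigl\{g\in\Gamma\mid \dang(gV',W)\leq \rho\bigr\}\dd\nu(V').
\]
Given $\rho>0$ small, fix $\omega>0$ arbitrarily and let $c>0$, $l_0\ge 1$ be the constants produced by Proposition~\ref{pr:RWonGr}\ref{it:danggVW} for this $\omega$. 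Choose $l=\lceil-(\log\rho)/\omega\rceil$, so that $e^{-\omega l}\leq \rho$, and set $n=l$. Then the integrand is uniformly (in $V'$ and $W$) bounded by $e^{-cl}\leq \rho^{c/\omega}$, provided $l\ge l_0$, i.e.\ $\rho\leq e^{-\omega l_0}$. Adjusting the constant to handle the range $\rho>e^{-\omega l_0}$ trivially, we obtain the first inequality with any $\kappa<c/\omega$.

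The second assertion will then follow by the layer-cake formula. Fix $\kappa_0<c/\omega$ and any $\kappa\in(0,\kappa_0)$. Writing
\[
\int_{\Gr(r,d)}\dang(V,W)^{-\kappa}\dd\nu(V)
=\int_0^{+\infty}\nu\bigl\{V\mid \dang(V,W)^{-\kappa}>t\bigr\}\dd t,
\]
and splitting the integral at $t=1$, the part over $[0,1]$ is bounded by $1$, while on $[1,+\infty)$ we substitute $t=\rho^{-\kappa}$ and apply the first assertion with exponent $\kappa_0$ to get a bound by an absolute constant times $\int_0^1 \rho^{\kappa_0-\kappa-1}\dd\rho$, which is finite since $\kappa<\kappa_0$. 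The resulting bound is uniform in $W$, yielding \eqref{eq:regonGr}.

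The only step requiring care is the bookkeeping between the parameters $\omega$, $c$, $l_0$ in Proposition~\ref{pr:RWonGr}\ref{it:danggVW} and the Hölder exponent $\kappa$: concretely, one picks $\omega$ first (its value is immaterial, only the ratio $c/\omega$ matters), derives the exponent $c/\omega$ for the first inequality, and then selects $\kappa$ strictly smaller than this exponent to ensure both inequalities simultaneously. No genuine difficulty is anticipated, as the heavy lifting has already been done in Proposition~\ref{pr:RWonGr}.
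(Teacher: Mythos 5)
Your proposal matches the intended route: stationarity $\nu=\mu^{*n}*\nu$ combined with the large deviation estimate of Proposition~\ref{pr:RWonGr}\ref{it:danggVW}, then the layer-cake formula for \eqref{eq:regonGr}; the paper simply refers to the analogous argument in Benoist--Quint (Theorem~14.1), which is the same idea.

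One minor caveat on the "adjusting the constant" step: the bound $\nu\{\dang(V,W)\leq\rho\}\leq\rho^\kappa$ cannot hold without a multiplicative constant on the intermediate range $\rho\in(e^{-\omega l_0},1)$. Indeed, Lemma~\ref{lm:nuSch=0} shows $\nu$ gives zero mass to every proper Schubert variety, in particular to the singleton $\{W^\perp\}=\{V:\dang(V,W)=1\}$, so $\nu\{\dang(V,W)\leq\rho\}\to 1$ as $\rho\to1^-$, while $\rho^\kappa<1$ for every $\kappa>0$; shrinking $\kappa$ does not cure this. What your argument actually yields, and what the finiteness of the integral in \eqref{eq:regonGr} genuinely uses, is $\nu\{\dang(V,W)\leq\rho\}\leq C\rho^\kappa$ with some uniform constant $C\geq 1$ --- an imprecision the paper's own statement shares, so this does not affect the substance.
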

For $r = 1$, this is due to Le Page~\cite{LePage} and it is a direct consequence of Theorem~\ref{thm:LargeD}\ref{it:LargeDnc}. For $r > 1$, one checks easily that \eqref{eq:regonGr} is stronger than \cite[Theorem 14.5]{BenoistQuint} which hold without the assumption \eqref{eq:propertyS}. On the other hand, \eqref{eq:regonGr} clearly fails if the limit set of $\Gamma$ is contained in a proper Schubert variety.
\begin{proof}
The proof follows the same pattern as in the proof of~\cite[Theorem 14.1]{BenoistQuint}.
\end{proof}
\section{Random walk on the Torus}
\label{sc:check}
The goal of this section is to explain the proof of Theorem~\ref{thm:main}, which is merely an adaptation of the proof in the Bourgain-Furman-Lindenstrauss-Mozes paper~\cite{BFLM}. We will indicate where the proximality assumption {\upshape (P)} is used in~\cite{BFLM} and how to adapt under the assumption~\eqref{eq:propertyS}.

Throughout this section, $\mu$ denotes a probability measure on $\SL_d(\Z)$ with finite exponential moment. Let $\Gamma$ denote the subsemigroup generated by $\Supp(\mu)$. Let $r = r_\Gamma$ denote the proximal dimension of $\Gamma$.
Assume that $\Gamma$ acts strongly irreducibly on $\R^d$ and satisfies \eqref{eq:propertyS} and that $r$ divides $d$. Note that necessarily $r < d$ because the proximal dimension of a subsemigroup of $\SL_d(\Z)$ is equal to $d$ if and only if the subsemigroup is finite and a finite group never acts strongly irreducibly on $\R^d$ for $d \geq 2$. Hence $d/r$ is an integer larger or equal than $2$.

Let $\nu_0$ be a Borel probability measure on $\mathbb{T}^d$ and write $\nu_n = \mu^{*n} * \nu_0$ for $n \in \N$. 
For $t > 0$ and integer $n \geq 0$, let $A_{t,n}$ be the set of large Fourier coefficients of $\nu_n$,
\[A_{t,n} = \{a \in \Z^d \mid \abs{\hat\nu_{n}(a)} \geq t\}. \]

For bounded subset $A \subset \R^d$ and $M > 0$, we define $\Ncov(A,M)$ as the minimal integer $N$ such that $A$ can be covered by $N$ balls of radius $M$. 

Recall very roughly the outline of the proof of Theorem~\ref{thm:BFLM} in~\cite{BFLM}. It consists of two phases. 
In Phase I (Theorem 6.1 in~\cite{BFLM}), one proves that if $A_{t_0,n_0}$ contains a nonzero element $a_0$ for some $t_0 \in {(0,1/2)}$ and some large $n_0$, then there is $n_1 \leq n_0$ such that 
\[\Ncov(A_{t_1,n_1} \cap B(0,N), M) \geq t_1 \bigl(\frac{N}{M}\bigr)^d\]
where $t_1 = t_0^{O(1)}$ and $N$, $M$ are quantities that can be bounded in certain range. 
Then by a harmonic analytic lemma (Proposition 7.5 in~\cite{BFLM}), one deduces that $\nu_{n_1}$ is granulated: there exists an $\frac{1}{M}$-separated set $X \subset \mathbb{T}^d$ such that
\[\nu_{n_1}\Bigl(\bigcup_{x \in X} B(x,\frac{1}{N})\Bigr) \geq t_2\]
where $t_2 = t_1^{O(1)}$.
In Phase II (Section 7 in~\cite{BFLM}), one shows that the granulation in $\nu_{n_2}$ become stronger as $n_2$ gets smaller and the grains (the set $X$ as above) are close to rational points of bounded heights. This allows to conclude.

Now we will indicate in each step, where the assumption {\upshape (P)} is used and what needs to be said if we only have assumption~\eqref{eq:propertyS} instead of {\upshape (P)}.

\subsection{Initial dimension of large Fourier coefficients.}
The first step in Phase I is to obtain an initial rough dimension of the large Fourier coefficients. This is Proposition 6.2 in~\cite{BFLM}. Here the proximality is used in the form of a non-concentration estimate of the associated random walk on $\mathbb{P}(\R^d)$:
given $\omega > 0$, there exists $c > 0$ such that for $m$ large enough and for all lines $x,y \in \mathbb{P}(\R^d)$,
\[\mu^{*m} \bigl\{g \in \Gamma \mid \dang(g x, y) \leq e^{-\omega m} \bigr\} \leq e^{-cm}.\]

For a group with property \eqref{eq:propertyS}, this non-concentration estimate is an immediate consequence of Proposiition~\ref{pr:RWonGr}. Thus Proposition 6.2 in~\cite{BFLM} hold under our assumption.

\subsection{Bootstrap of large scale dimension.}
Here we check that Proposition 6.3 in \cite{BFLM} holds under the assumption~\eqref{eq:propertyS}. This proposition allows to improve the rough dimension of the set of large Fourier coefficients $A_{t,n}$ by paying the price of making $t$ and $n$ smaller. 
This is where a discretized projection theorem is used. So this part presented the biggest obstacle in relaxing assumption {\upshape (P)} in Theorem~\ref{thm:BFLM}. For this reason we will give a detailed proof here.
First let us recall the statement.
\begin{prop}
\label{pr:prop6.3}
Given $\alphaini > 0$ and $\alphahigh < d$, there exists constants $\alphainc, C >0$ depending on $\alphaini$, $\alphahigh$ and $\mu$ such that if for some $0 < t < \frac{1}{2}$, $1 \leq M \leq N$ with
\[\log\frac{N}{M} > C \log\frac{1}{t} \text{ and } n \geq C\log\frac{N}{M}\]
it holds that 
\[\Ncov(A_{t,n} \cap B(0,N),M) \geq \Bigl(\frac{N}{M}\Bigr)^{\alpha} \text{ for some $\alphaini \leq \alpha \leq \alphahigh$},\]
then there is a new radius $N'> 0$, a new scale $M' \geq M$ and an integer $m \leq C \log\frac{N}{M}$ such that
\begin{equation}
\label{eq:propN'M'}
N' \leq N\Bigl(\frac{N}{M}\Bigr)^C \text{ and } \log\frac{N'}{M'} \geq \frac{1}{2}\log\frac{N}{M}
\end{equation}
and  
\[\Ncov(A_{t',n'} \cap B(0,N'),M') \geq  \Bigl(\frac{N'}{M'}\Bigr)^{\alpha + \alphainc}\]
for $t' = 2^{-10d}t^{2d}$ and $n' = n - m$.
\end{prop}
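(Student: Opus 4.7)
My plan is to follow the skeleton of \cite[Proposition 6.3]{BFLM}, replacing Bourgain's line projection theorem by the higher-rank projection theorem of \cite{He_proj}; property~\eqref{eq:propertyS} will enter only through Proposition~\ref{pr:RWonGr}. Throughout I would choose $m$ of order $\log(N/M)$, set $n'=n-m$, and start from the convolution identity
\[\hat\nu_n(a) \;=\; \int_\Gamma \hat\nu_{n'}(\transp{g}a)\dd\mu^{*m}(g),\]
which follows from $\nu_n = \mu^{*m}*\nu_{n'}$. Since $\abs{\hat\nu_{n'}}\leq 1$, Markov's inequality immediately yields $\mu^{*m}\bigl\{g \in \Gamma \mid \transp{g}a \in A_{t/2,n'}\bigr\} \geq t/2$ for every $a \in A_{t,n}$.

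To exploit this, I would unpack $\transp{g}$ via its Cartan decomposition, writing
\[\transp{g}a \;=\; \sigma_1(g)\,U_g\bigl(\pi_{V^+_g}(a)\bigr) \;+\; O\bigl(\sigma_{r+1}(g)\,\norm{a}\bigr),\]
where $\pi_{V^+_g}$ is the orthogonal projection onto the $r$\dash{}plane $V^+_g$ and $U_g \colon V^+_g \to V^+_{\transp{g}}=(V^-_g)^\perp$ is an isometry. Outside a $\mu^{*m}$\dash{}set of exponentially small mass, Theorem~\ref{thm:LargeD}\ref{it:LargeDsv} (applied with $k=1$ and $k=r+1$) controls $\sigma_1(g)$ and the ratio $\sigma_{r+1}(g)/\sigma_1(g)$ polynomially in $N/M$, and Proposition~\ref{pr:RWonGr}\ref{it:dangV+W} (this is where \eqref{eq:propertyS} is used) says that the pushforward of $\mu^{*m}$ under $g \mapsto V^+_g$ is polynomially non\dash{}concentrated near every proper Schubert variety of $\Gr(r,d)$, at scale $N/M$. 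Together these inputs reduce the problem, up to negligible error and an overall rescaling by $\sigma_1(g)$, to that of projecting $A_{t,n}\cap B(0,N)$ orthogonally onto a random $r$\dash{}plane whose law satisfies a quantitative non\dash{}Schubert condition.

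The main step is then a single invocation of the higher-rank discretized projection theorem from \cite{He_proj}: its hypotheses are exactly those assembled above, and its conclusion, combined with the Markov information, gives
\[\Ncov\bigl(A_{t',n'}\cap B(0,N'),\,M'\bigr) \;\geq\; \Bigl(\tfrac{N}{M}\Bigr)^{\alpha+\alphainc}\]
with $N' = \sigma_1(g) N$, $M' = \sigma_1(g) M$ (so that $N'/M' = N/M$ and $N' \leq N(N/M)^C$), and $t' = 2^{-10d}t^{2d}$ absorbing the Markov loss, the restriction to the $\mu^{*m}$\dash{}good set, and the loss inherent in the projection theorem. The divisibility $r \mid d$ is expected to enter exactly at this point, allowing one to decompose $\R^d$ into $d/r$ generic $r$\dash{}dimensional summands and thus propagate the single-projection gain past the natural barrier $\alpha = r$ all the way up to $\alphahigh$. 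The hard part will be verifying that the non-concentration statement provided by Proposition~\ref{pr:RWonGr}\ref{it:dangV+W} matches exactly the hypothesis demanded by \cite{He_proj}, and that the assumption $r \mid d$ really does suffice to push the bootstrap through every $\alpha \in [\alphaini,\alphahigh]$; once these two points are secured, the bookkeeping of $t', N', M', m$ is a routine transcription of the argument in \cite[\S 6]{BFLM}.
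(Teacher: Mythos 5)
Your high-level plan is on the right track and correctly identifies the ingredients (Markov on $\hat\nu_n(a)=\int\hat\nu_{n'}(g^*a)\dd\mu^{*m}(g)$, Cartan decomposition of $g^*$, large-deviation control of singular values, Proposition~\ref{pr:RWonGr} for non-concentration, Theorem~\ref{thm:proj}, and the role of $r\mid d$). But there is a genuine gap at the central step, and it is not a matter of bookkeeping.

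A \emph{single} application of Theorem~\ref{thm:proj} produces a set $\pi_V(E')$ sitting in an $r$-plane $V$, whose $\delta$-covering number is at most $\delta^{-r}=(N'/M')^r$. Since $\alpha$ can be as large as $\alphahigh$, which is typically $>r$, no single projection can possibly yield $(N'/M')^{\alpha+\alphainc}$. Your phrase ``a single invocation \dots\ gives'' is therefore wrong, and ``decompose $\R^d$ into $d/r$ generic $r$-dimensional summands'' names the fix without giving the mechanism. What the proof actually does (and what BFLM do in the $r=1$ case with $k=d$ copies): draw $k=d/r$ independent samples $\vec g=(g_1,\dotsc,g_k)\sim(\mu^{*m})^{\otimes k}$; raise $\abs{\sum_{a\in A}\hat\nu_n(a)}$ to the $2k$-th power and apply H\"older, which replaces single frequencies $g^*a$ by the sums $\Sigma_{\vec g}(\vec a)=\sum_{i=1}^k g_i^*a_i$ with $\vec a\in A^k$; pigeonhole to fix a good $\vec g$ and a reference point, then run an iterative pigeonholing/tree argument on $T\subset A^k$. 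The covering-number gain is assembled level by level along the tree: at level $j$ one uses the projection theorem to show $\Ncov(g_j^*A_j(t),M')$ is large, and the well-spacedness $\dang\bigl(V^+_{g_j^*},V^+_{g_1^*}\oplus\dotsm\oplus V^+_{g_{j-1}^*}\bigr)\geq e^{-\omega m}$ (which is a second, separate use of Proposition~\ref{pr:RWonGr}\ref{it:dangV+W}, defining the good set $\Gvol$) guarantees that these $r$-dimensional contributions essentially direct-sum in $\R^d$ --- this is exactly where $r\mid d$ is used, since $\bigoplus_{i=1}^k V^+_{g_i^*}$ then spans $\R^d$. This $k$-sample H\"older-and-tree construction is the heart of the step, it also explains the loss $t'=2^{-10d}t^{2d}$ (the factor $t^{2k}$ from the $2k$-th power plus pigeonholing), and it is absent from your proposal. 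Without it the argument stalls at $\alpha=r$, as you suspect; with it the bound follows, but ``routine transcription'' is an understatement given the two distinct roles Proposition~\ref{pr:RWonGr} now plays.

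Two smaller inaccuracies: $\theta_g$ (your $U_g$) is only ``almost orthogonal'' up to a factor $e^{\omega m}$, not an isometry, so one must carry this polynomial loss in $N/M$; and the radius $N'$ is not exactly $\sigma_1(g)N$ but is preceded by a regularization step replacing $N$ by a slightly smaller $N_1$ (\cite[Lemma~6.7]{BFLM}) so that the $(C_\epsilon t^{-2},\alpha-\tau/2)$-regularity required by Theorem~\ref{thm:proj} holds. Both are routine, but the first affects the final exponent bookkeeping.
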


Let us state the discretized projection theorem that is needed here. For a subspace $V \in \Gr(r,d)$, let $\pi_V \colon \R^d \to V$ denote the orthogonal projection onto $V$.
\begin{thm}[{\cite[Theorem 1]{He_proj}}]
\label{thm:proj}
Let $0 < r <  d$ be integers. Given $0 < \alpha < d$, $\kappa > 0$ there exists $\tau > 0$ such that the following holds for $\delta > 0$ sufficiently small. Let $E \subset B(0,1)$ be a bounded subset of $\R^d$. Let $\eta$ be a probability measure on $\Gr(r,d)$. Assume that
\begin{equation}
\Ncov(E,\delta) \geq \delta^{-\alpha + \tau};
\end{equation}
\begin{equation}
\forall \rho \geq \delta,\, \max_{x \in \R^d} \Ncov(E \cap B(x,\rho),\delta) \leq \delta^{-\tau}\rho^\kappa\Ncov(E,\delta);
\end{equation}
\begin{equation}
\forall \rho \in {[\delta, \delta^\tau]},\, \forall W \in \Gr(d-r,d),\, \eta\{ V \in \Gr(r,d) \mid \dang(V,W)\} \leq \rho^\kappa.
\end{equation}
Then there exists $\mathcal{D} \subset \Gr(r,d)$ such that $\eta(\mathcal{D}) \geq 1 - \delta^\tau$ and
\[\Ncov(\pi_V(E'),\delta) \geq \delta^{-\frac{r}{d}\alpha - \tau}\]
whenever $V \in \mathcal{D}$ and $E' \subset E$ is a subset such that $\Ncov(E',\delta) \geq \delta^\tau \Ncov(E,\delta)$.
\end{thm}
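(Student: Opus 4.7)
The plan is to adapt the proof of~\cite[Proposition 6.3]{BFLM} by substituting the higher-rank Theorem~\ref{thm:proj} in place of Bourgain's one-line discretized projection theorem used in the original proximal setting. The starting point is the identity
\[\hat\nu_n(a) = \int \hat\nu_{n-m}(g^T a)\dd\mu^{*m}(g),\]
together with Hölder's inequality applied to the $(2d)$-th power, which yields that for every $a\in A_{t,n}$ at least a $t^{2d}/2$ fraction of $g \sim \mu^{*m}$ satisfies $g^T a \in A_{t', n-m}$ with $t' = 2^{-10d} t^{2d}$. After rescaling by $1/N$, the multi-scale regularization argument of~\cite[Section 5]{BFLM}---which is spectrally agnostic and transfers verbatim---extracts a subset $E \subset A_{t,n} \cap B(0,N)$ of $M$-covering at least $(N/M)^{\alpha - O(\tau)}$ satisfying the Frostman-type non-concentration condition $\max_x \Ncov(E\cap B(x,\rho), M) \leq (N/M)^\tau (\rho/N)^\kappa \Ncov(E,M)$ for all $\rho\in[M,N]$ and some $\kappa = \kappa(\alphaini, \alphahigh, \mu) > 0$.

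Theorem~\ref{thm:proj} is then applied to $E/N \subset B(0,1)$ at scale $\delta = M/N$, with $\eta$ the pushforward of $\mu^{*m}$ under the Borel map $g\mapsto V_g^+$. Its hypothesis of non-concentration near Schubert varieties, $\eta\{V : \dang(V,W)\leq\rho\} \leq \rho^\kappa$ uniformly in $W \in \Gr(d-r,d)$, is precisely Proposition~\ref{pr:RWonGr}\ref{it:dangV+W} (after shrinking $\omega$). The conclusion yields a subset of $g$'s of $\mu^{*m}$-measure at least $1-\delta^\tau$ for which $\Ncov(\pi_{V_g^+}(E'), \delta) \geq \delta^{-r\alpha/d - \tau}$ for any positive-density $E' \subset E$. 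Combining Proposition~\ref{pr:RWonGr}\ref{it:danggVV+} with Theorem~\ref{thm:LargeD}\ref{it:LargeDsv}, the map $g^T$ is well-approximated by $\sigma_1(g)\, u_g\circ\pi_{V_g^+}$, where $u_g$ is an isometry from $V_g^+$ into $V_g^-$, with relative error exponentially small in $m$. Consequently $g^T(E') \subset A_{t',n-m}$ satisfies $\Ncov(g^T(E'), \sigma_1(g) M) \geq (N/M)^{r\alpha/d + \tau}$ and lies in $V_g^- \cap B(0, \sigma_1(g) N)$.

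A single image $g^T(E')$ produces only dimension $r\alpha/d < \alpha$, which is insufficient; the bootstrap comes from combining images from $d/r$ different elements $g_1,\ldots,g_{d/r}$ of the typical set whose subspaces $V_{g_i}^-$ are in direct-sum position $\R^d = V_{g_1}^- \oplus \cdots \oplus V_{g_{d/r}}^-$. This is exactly where the divisibility hypothesis $r\mid d$ and the directional spread from Proposition~\ref{pr:RWonGr}\ref{it:danggVW} enter, the latter being used iteratively to select each $V_{g_{i+1}}^-$ far from the Schubert variety $\{V : V \cap (V_{g_1}^- + \cdots + V_{g_i}^-) \neq \{0\}\}$ with quantitative transversality. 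An additive-combinatorial step analogous to~\cite[Section 6]{BFLM}, using near-additive-closure of $A_{t,n}$ in the Plünnecke--Ruzsa framework (based on Parseval-derived doubling bounds for the level sets of $\hat\nu_n$), then transfers the direct-sum decomposition into a subset of $A_{t'',n-m}$ with $t'' = t'^{O(1)}$ whose $M'$-covering inside $B(0, N')$ is at least $\prod_{i=1}^{d/r} (N/M)^{r\alpha/d + \tau} = (N/M)^{\alpha + d\tau/r}$. With $m\asymp\log(N/M)$, the large-deviation control on $\sigma_1(g)$ gives $M' \asymp \max_i \sigma_1(g_i) M \leq M(N/M)^{O(1)}$ and $N' \asymp \max_i \sigma_1(g_i) N \leq N(N/M)^{O(1)}$, hence $N'/M' \asymp N/M$ and the conclusion of the proposition follows with $\alphainc = d\tau/r$.

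The main obstacle is the final combining step: transplanting the BFLM additive-combinatorial argument from the proximal $r=1$ setting to our higher-rank setting, and in particular ensuring that $d/r$ typical $V_{g_i}^-$'s can be chosen simultaneously in well-conditioned direct-sum position---with distortion depending only on $\alphaini$, $\alphahigh$ and $\mu$---so that the covering of the combined set factors (up to a multiplicative constant) as the product of individual coverings. The remaining ingredients of the BFLM proof---the Phase I initial dimension step, the harmonic-analytic lemma of~\cite[Proposition 7.5]{BFLM} producing granulation, and all of Phase II (propagation to smaller~$n$ and extraction of rational approximations)---carry over essentially unchanged, as proximality in those steps enters only through random-walk non-concentration estimates that are now provided by Section~\ref{sc:grass}.
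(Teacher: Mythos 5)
Your text does not address the statement you were asked to prove. Theorem~\ref{thm:proj} is a discretized projection theorem in the spirit of Bourgain--Marstrand: given a $\delta$\dash{}discretized fractal set $E$ of dimension close to $\alpha$ and a measure $\eta$ on $\Gr(r,d)$ avoiding Schubert varieties quantitatively, \emph{most} rank\dash{}$r$ orthogonal projections increase the covering exponent of every positive-density subset of $E$ to at least $\frac{r}{d}\alpha+\tau$. The paper does not prove this theorem at all; it is imported as a black box from the author's separate paper~\cite{He_proj}, where its proof requires a genuine multiscale additive\dash{}combinatorial/harmonic\dash{}analytic argument (a higher-rank generalization of Bourgain's sum\dash{}product and projection machinery from~\cite{Bourgain2010}).

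What you have written instead is a plan for \emph{applying} Theorem~\ref{thm:proj} inside the Bourgain--Furman--Lindenstrauss--Mozes scheme --- in effect, a sketch of Proposition~\ref{pr:prop6.3} (the bootstrap step for the dimension of the set of large Fourier coefficients) rather than of the projection theorem itself. Nothing in your proposal --- the Hölder inequality on the Fourier coefficients, the regularization of $A_{t,n}$, the approximation of $g^T$ by $\sigma_1(g)\,\theta_g\pi_g$, the selection of $d/r$ elements $g_i$ with transversal $V^-_{g_i}$, the additive\dash{}combinatorial combination of the images --- goes toward establishing the projection estimate $\Ncov(\pi_V(E'),\delta)\geq\delta^{-\frac{r}{d}\alpha-\tau}$ for typical $V$. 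All of these steps presuppose the projection theorem as an already-known input. To prove the theorem you would need to engage with the core of~\cite{He_proj}: quantify how the non\dash{}concentration of $\eta$ near Schubert varieties forces entropy expansion of rank\dash{}$r$ projections of Frostman\dash{}regular sets, typically via a tree/multiscale decomposition, Balog--Szemerédi--Gowers type arguments, and a sum\dash{}product ingredient adapted to $\Gr(r,d)$. None of that appears in your proposal, so it cannot be accepted as a proof of the stated theorem.
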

The $r = 1$ case is due to Bourgain~\cite{Bourgain2010} and is used in~\cite{BFLM}.
It is clear from the statement that $\tau$ can be chosen to be uniform for $\alpha$ varying between $\alphaini$ and $\alphahigh$.

\begin{proof}[Proof of Proposition~\ref{pr:prop6.3}]
Let $\kappa_0 = c$ and $m_0 = l_0$ be the constants given by Proposition~\ref{pr:RWonGr} applied to $\omega = 1$. We have, for all $m \geq m_0$
\begin{equation}
\label{eq:regularV+}
\forall \rho\in {[e^{-m}, e^{-m_0}]},\, \mu^{*m}\{g\in \Gamma \mid  \dang(V^+_g,W) \leq \rho\} \leq \rho^{\kappa_0/2}.
\end{equation}

Let $\tau > 0$ be the constant given by Theorem~\ref{thm:proj} applied to the parameters $\alpha$ and $\kappa = \min(\alphaini,\kappa_0/2)$. Put 
\[\omega = \frac{\tau \min(\lambda_1 - \lambda_{r+1},1)}{17d^2}.\]
Let $c > 0$ and $l_0 \geq 1$ be the constants depending on $\omega$ given by Proposition~\ref{pr:RWonGr}. Let be $C$ be a large constant and we assume that
\begin{equation}
\label{eq:assumC}
\log\frac{N}{M} \geq C \log\frac{2}{t} \text{ and } n \geq C\log\frac{N}{M}.
\end{equation}
The choice of $\omega$ allows us to choose an integer $m \leq C\log\frac{N}{M}$ such that 
\begin{equation}
\label{eq:assumm}
m \geq l_0,\, \frac{N}{M} \leq e^{(\lambda_1 - \lambda_{r+1} - \omega)m},\, \frac{N}{M} \leq e^m \text{ and } e^{16d^2\omega m} \leq \Bigl(\frac{N}{M}\Bigr)^\tau .
\end{equation}

Write $k = \frac{d}{r}$ and define
\[ \Glen = \bigl\{\ g \in \Gamma \mid \abs{\frac{1}{m}\log\sigma_j(g) - \lambda_j} \leq \omega \text{ for $j = 1,\dotsc,d$} \bigr\}\]
and
\[\Gvol = \bigl\{ (g_1,\dotsc g_k) \in \Gamma^k \mid \dang(V^+_{g^*_j}, V^+_{g^*_1} + \dotsb + V^+_{g^*_{j-1}}) \geq e^{-\omega m} \text{ for $j = 1,\dotsc,d$ } \bigr\}\]
It follows (from Proposition~\ref{pr:RWonGr}) that
\[\mu^{*m}(\Glen) \geq 1 - e^{-cm}\]
and
\[(\mu^{*m})^{\otimes k} (\Gvol) \geq 1 - k e^{-cm}\]

For $g \in \Gamma$, let $g^* = k\diag(\sigma_1(g),\dotsc,\sigma_d(g))l$ be a Cartan decomposition of its transposition. Define 
\[\theta_g = k\diag(e^{-\lambda_1 m}\sigma_1(g),\dotsc,e^{-\lambda_1 m}\sigma_r(g),1,\dotsc,1)l\]
and
\[\pi_g = l^{-1}\diag(\underbrace{1,\dotsc,1}_{\text{$r$ times}},0,\dotsc,0)l\]
so that $\pi_g$ is a rank $r$ orthogonal projection of image $\im(\pi_g) = V^+_g$ and $\theta_g\pi_g$ has image $V^+_{g^*}$. 
When $g \in \Glen$, we have
\begin{equation}
\label{eq:gthetapi}
\norm{g^* - e^{\lambda_1 m}\theta_g\pi_g} \leq \sigma_{r+1}(g) \leq e^{(\lambda_{r+1} + \omega) m}.
\end{equation}
Moreover, the part $\theta_g$ is almost orthogonal, that is.
\begin{equation}
\label{eq:thetaalO}
\norm{\theta_g}, \norm{\theta_g^{-1}} \leq  e^{\omega m}.
\end{equation}

By \cite[Lemma 6.7]{BFLM} applied to $\epsilon = \min(\frac{\tau}{20},\frac{d - \alphahigh}{6})$, we can find $N_1$ such that
\begin{equation}
\label{eq:rangeN1}
\frac{1}{2} \log\frac{N}{M} \leq \log\frac{N_1}{M} \leq \log\frac{N}{M}
\end{equation}
and a $M$-separated subset $A \subset A_{2^{-2}t^2,n}$ which is $(C_\epsilon t^{-2}, \alpha - \tau/2)$\dash{}regular at scale $M$, i.e.
\begin{equation}
\label{eq:regularA}
\forall s \geq M,\, \max_{x \in \R^d} \abs{A \cap B(x,s)} \leq C_\epsilon t^{-2} \bigl(\frac{s}{M}\bigr)^{\alpha - \tau/2} \abs{A}.
\end{equation}

By throwing away some elements, we may assume that $\hat\nu_n(a)$ with $a \in A$ all lie in a single quadrant of $\C$, so that
\begin{equation}
\label{eq:defAnu}
\abs{\sum_{a \in A} \hat\nu_n(a)} \geq 2^{-3}t^2 \abs{A}.
\end{equation}

Renormalize $A$ by setting $E = \frac{1}{N_1}\cdot A$. Let $\eta$ be image measure of $\mu^{*m}$ by the map $g \mapsto \im(\pi_g)$. Now apply Theorem~\ref{thm:proj} to the set $E$ and to the measure $\eta$ at scale $\delta = \frac{M}{N_1}$. The assumptions are readily satisfied from \eqref{eq:regularA} and \eqref{eq:regularV+} provided that
\[ C_\epsilon t^{-2} \leq \delta^{\tau/2}, e^{-m} \leq \delta \text{ and } \delta^\tau \leq e^{- m_0},\]
which is achieved under the assumptions~\eqref{eq:assumC} and \eqref{eq:assumm} with $C$ chosen large enough according to the other parameters.
We obtain a subset $\Gproj \subset \Gamma$ such that 
\[\mu^{*m}(\Gproj) \geq 1 - \delta^\tau\]
and for any $g \in \Gproj$ and any $A'\subset A$ with $\abs{A'} \geq \delta^\tau \abs{A}$, we have
\[\Ncov(\pi_g(A'),M) \geq \delta^{-\alpha/k - \tau}.\]
If moreover $g \in \Glen$, in view of \eqref{eq:thetaalO}, this yields
\[\Ncov(e^{\lambda_1 m}\theta_g \pi_g (A'),M')\geq \delta^{-\alpha/k - \tau} e^{-d\omega m}\]
where $M' = e^{\lambda_1 m}M$. From \eqref{eq:gthetapi} and \eqref{eq:assumm}, we have for all $a \in A'$, 
\[\norm{g^* a - e^{\lambda_1 m}\theta_g \pi_g a} \leq e^{(\lambda_{r+1} + \omega)m} N_1 \leq M'.\]
It follows that
\[g^* A' \subset \Nbd(V^+_{g^*}, M')\]
and
\[\Ncov(g^* A',M') \geq 2^{-d}\delta^{-\alpha/k - \tau} e^{-d\omega m} \geq \Bigl(\frac{N'}{M'}\Bigr)^{\alpha/k + \tau} e^{-2d\omega m} \]
where $N' = 2de^{(\lambda_1+ \omega) m}N_1$. 
Note that from the choice of $M'$ and $N'$, \eqref{eq:propN'M'} is satisfied.

Taking the $2k$-th power of \eqref{eq:defAnu} and applying the Hölder inequality, we obtain
\[ 2^{-6k}t^{2k} \abs{A}^{2k} \leq \iint_{\vec{g}, \vec{g}' \in \Gamma^k} \sum_{\vec{a}, \vec{a}' \in A^k} \abs{\hat\nu_{n'}(\Sigma_{\vec{g}}(\vec{a}) - \Sigma_{\vec{g}'}(\vec{a}'))} \dd (\mu^{*m})^{\otimes k}(\vec{g}) \dd (\mu^{*m})^{\otimes k}(\vec{g}')\]
where $\Sigma_{\vec{g}}(\vec{a}) = \sum_{i=1}^k g^*_ia_i$ for $\vec{g} = (g_1,\dotsc,g_k)$ and $\vec{a} = (a_1,\dotsc,a_k)$. By pigeonholing and the fact that
\[ 2^{-6k-1}t^{2k} \geq e^{-cm},\] 
we can find $\vec{g}' \in \Glen$ and $\vec{a}' \in \Gamma^k$ such that, writing $\mathbf{a} = \Sigma_{\vec{g}'}(\vec{a}')$, we have
\[\int_{\vec{g} \in \Gamma^k}\sum_{\vec{a} \in A^k}  \abs{\hat\nu_{n'}(\Sigma_{\vec{g}}(\vec{a}) -  \mathbf{a})} \dd (\mu^{*m})^{\otimes k}(\vec{g}) \geq 2^{-6k-2}t^{2k}\abs{A}^k.\]
Again by pigeonholing, the set $\Gstat$ defined by
\[\Gstat = \bigl\{\vec{g} \in \Gamma^k \mid \sum_{\vec{a} \in A^k} \abs{\hat\nu_{n'}(\Sigma_{\vec{g}}(\vec{a}) -  \mathbf{a})} \geq 2^{-6k-3}t^{2k}\abs{A}^k \bigr\}\]
satisfies $(\mu^{*m})^{\otimes k}(\Gstat) \geq 2^{-6k-3}t^{2k}$.
Then the set $\mathcal{G} =  \Glen^k \cap \Gvol \cap \Gproj^k \cap \Gstat$ has measure
\[\mu^{*m}(\mathcal{G}) \geq 2^{-6k-3}t^{2k} - 2k e^{-cm} - k\delta^\tau > 0\]
provided that $C$ is chosen large enough in \eqref{eq:assumC}.

From now on fix an element $\vec{g} \in \mathcal{G}$. 
By iterative pigeonholing we can find a subset $T \subset A^k$ having the following properties 
\begin{enumerate}
\item We have $\Sigma_{\vec{g}}(T) - \mathbf{a} \subset A_{t',n'}$ with $t'= 2^{-10k}t^{2k}$ and $n'= n -m$.
\item There is a tree structure associated to $T$ for which $T$ is the set of leaves. Namely, the tree have $k + 1$ level. Level $0$ is the root and for $j = 1,\dotsc, k$, the level $i$ vertices are
\[T_j = \{(a_1,\dotsc, a_j) \in A^j \mid \exists (a_{j+1},\dotsc,a_k) \in A^{k-j}, (a_1,\dotsc,a_k) \in T\}.\]
For each node $(a_1,\dotsc, a_{j-1}) \in T_{j-1}$, its descendants are the elements in $T_j$ that have $(a_1,\dotsc, a_{j-1})$ as the $j-1$ first coordinates. Thus their $j$-th coordinates are
\[A_j(a_1,\dotsc, a_{j-1}) =\{a_j \in A \mid (a_1,\dotsc, a_j) \in T_j\}.\]
\item For all $j = 1,\dotsc, k$ and all $t \in T_{j-1}$
\[\abs{A_j(t)} \geq 2^{-10k}t^{2k} \abs{A} \geq \delta^\tau \abs{A}.\]
\end{enumerate}

From the last point and the fact $g_j \in \Glen \cap \Gproj$, we know that for all $j = 1,\dotsc, k$ and all $t \in T_{j-1}$,
\begin{equation}
\label{eq:gAinV+g}
g^*_j A_j(t) \subset \Nbd(V^+_{g^*_j}, M')
\end{equation}
and
\begin{equation}
\label{eq:gAbig}
\Ncov(g^*_j A_j(t),M') \geq \Bigl(\frac{N'}{M'}\Bigr)^{\alpha/k + \tau} e^{-2d\omega m}.
\end{equation}

It is also clear that $\Sigma_{\vec{g}}(T) - \mathbf{a}$ is contained in $B(0,N')$. We will put \eqref{eq:gAbig} for different $j$ together to establish
\[\Ncov(\Sigma_{\vec{g}}(T) , M') \geq \Bigl(\frac{N'}{M'}\Bigr)^{\alpha + k\tau}e^{-4kd\omega m} .\]
which finish the proof the proposition after taking \eqref{eq:assumm} into account.

For that it suffices to prove for $j=1, \dotsc, k$, 
\begin{equation}
\label{eq:inductionTi}
\Ncov(\Sigma^{(j)}_{\vec{g}}(T_j),M') \geq  \Bigl(\frac{N'}{M'}\Bigr)^{j\alpha/k + j\tau} e^{-4d\omega m} \Ncov(\Sigma^{(j-1)}_{\vec{g}}(T_{j-1}),M')
\end{equation}
where $\Sigma^{(j)}_{\vec{g}} \colon T_j \to \Z^d$ is a shorthand for
\[\Sigma^{(j)}_{\vec{g}}(a_1,\dotsc,a_j) = \sum_{i=1}^j g^*_i a_i\]
with the convention $\Sigma^{(0)}_{\vec{g}} = 0$.

First, by \eqref{eq:gAinV+g}, for $j=1, \dotsc, k$, 
\[\Sigma^{(j)}_{\vec{g}}(T_j) \subset \Nbd(W_j, j M')\]
where $W_j \subset \Gr(jr,d)$ stands for
\[W_j = \bigoplus_{i=1}^{j} V^+_{g^*_i}.\]
Since $\vec{g} \in \Gvol$, we have
\begin{equation}
\label{eq:dangV+W}
\dang(V^+_{g^*_j}, W_{j-1}) \geq e^{-\omega m}
\end{equation}

From the definition of $M'$-covering number, we can find a subset $T'_{j-1} \subset T_{j-1}$ such that 
\[\abs{T'_{j-1}} \geq (8k)^{-d}e^{-d\omega m} \Ncov(\Sigma^{(j-1)}_{\vec{g}}(T_{j-1}),M'),\]
the map $\Sigma^{(j-1)}_{\vec{g}}$ restricted to $T'_{j-1}$ is injective and has $4ke^{\omega m} M'$-separated image.

From \eqref{eq:dangV+W} it follows that the $M'$-neighborhoods of the sets
\[\Sigma^{(j-1)}_{\vec{g}}(t) + g^*_j A_j(t),\quad t \in T'_{j-1}\]
are pairwise disjoint. Hence, taking \eqref{eq:gAbig} into account,
\begin{align*}
\Ncov(\Sigma^{(j)}_{\vec{g}}(T_j),M') &\geq \sum_{t \in T'_{j-1}} \Ncov(\Sigma^{(j-1)}_{\vec{g}}(t) + g^*_j A_j(t), M')\\
&\geq \sum_{t \in T'_{j-1}} \Ncov(g^*_j A_j(t), M')\\
&\geq \abs{T'_{j-1}} \Bigl(\frac{N'}{M'}\Bigr)^{\alpha/k + \tau} e^{-2d\omega m},
\end{align*}
establishing \eqref{eq:inductionTi}.
\end{proof}

\subsection{From high dimension to positive density.} 
In the last step of Phase I (Proposition 6.5 in \cite{BFLM}), one gets positive density of $A_{t,n}$, again by paying the price of making $t$ and $n$ smaller. 
The argument is essentially a discretized version of a projection theorem due to Peres-Schlag~\cite[Proposition 6.1]{PeresSchlag} combined with the same proof above for Proposition~\ref{pr:prop6.3} (Proposition 6.3 in~\cite{BFLM}). 

The projection theorem of Peres-Schlag~\cite[Proposition 6.1]{PeresSchlag} is valid for projections of all rank. The $r = 1$ case was used in~\cite{BFLM}. For our purpose, we need the general case. 

Let us state the discretized version and then indicate its proof.
\begin{prop}[Discretized version of {\cite[Proposition 6.1]{PeresSchlag}}]
\label{pr:PeresSchlag6.1}
Let $0 < r < d$. Given parameters $\alpha,\beta, C_E,C_\eta > 0$ such that $\alpha + \beta/4 > d$, there exists a constant $C = O_{d,\beta}(C_E C_\eta)$ such that the following hold.  
Let $E \subset B(0,1)$ be a subset of $\R^d$ and $\eta$ be a probability measure on $\Gr(r,d)$.
Assume that $E$ is $\delta$\dash{}separated and
\[\forall \rho \geq \delta,\, \forall x \in \R^d,\; \abs{E \cap B(x,\rho)} \leq C_E \rho^\alpha \abs{E}\]
and
\begin{equation}
\label{eq:etaNCxV}
\forall \rho \geq \delta, \, \forall x \in \mathbb{P}(\R^d),\;  \eta \{ V \in \Gr(r,d) \mid \dang(x,V) \leq \rho \} \leq C_\eta \rho^\beta
\end{equation}
Then there exists $V \in \Supp(\eta)$ such that for all subset $E' \subset E$ 
\[\Ncov(\pi_V(E'), \delta) \geq \frac{\abs{E'}^2}{C\abs{E}^2}\delta^{-r}.\]
Consequently, for any $C' \geq C$, there exists $\mathcal{D} \subset \Gr(r,d)$ such that $\eta(\mathcal{D}) \geq 1 - \frac{C}{C'}$ and such that for all $V \in \mathcal{D}$, we have
\begin{equation}
\label{eq:piVE'C0}
\forall E' \subset E,\;\Ncov(\pi_V(E'), \delta) \geq \frac{\abs{E'}^2}{C'\abs{E}^2}\delta^{-r}.
\end{equation}
\end{prop}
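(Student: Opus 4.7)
My plan is to adapt the second-moment energy argument in the style of Peres--Schlag~\cite{PeresSchlag}, suitably discretized. Given $E' \subset E$, introduce the collision count
\[I(V) = \#\bigl\{(p,q) \in E' \times E' : |\pi_V(p) - \pi_V(q)| \leq \delta\bigr\}.\]
A pigeonhole argument on the $\delta$-cover of $\pi_V(E')$ immediately gives $\Ncov(\pi_V(E'), \delta) \geq |E'|^2/I(V)$. Thus it is enough to prove $\int I(V)\dd\eta(V) \lesssim C_EC_\eta |E|^2 \delta^r$; then both conclusions of the proposition follow. The first conclusion (existence of some $V \in \Supp(\eta)$) is immediate by the mean-value property, while the second (positive-$\eta$ density of good $V$) follows from Chebyshev's inequality applied to the random variable $I(\cdot)$, with the uniformity in $E'$ handled by the observation that one can reduce to the worst choice $E' = E$ and peel off smaller subsets afterwards.

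By Fubini,
\[\int I(V)\dd\eta(V) = |E'| + \sum_{p \neq q \in E'} \eta\{V : |\pi_V(p-q)| \leq \delta\}.\]
For each pair $(p,q)$ with $s = |p-q| \geq \delta$, the event $|\pi_V(p-q)| \leq \delta$ amounts to a Schubert condition on $V$: setting $\bar u = \overline{p-q}$, the identity $|\pi_V(\bar u)|^2 + \dang(\bar u, V)^2 = 1$ shows this event corresponds to $V$ being close to perpendicular to $\bar u$. Use the transversality hypothesis~\eqref{eq:etaNCxV}---either directly or after pushing $\eta$ forward under $V \mapsto V^\perp$ to $\Gr(d-r,d)$---to bound this probability by a suitable power of $\delta/s$. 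Next, dyadically decompose pairs by their separation $|p-q| \sim 2^{-k}$; the $(C_E,\alpha)$-regularity of $E$ combined with its $\delta$-separation bounds the number of such pairs by $C_E 2^{-k\alpha}|E||E'|$. Summing the per-pair bounds over $k = 0, \ldots, \lceil \log_2(1/\delta) \rceil$ produces a geometric series in $k$; balancing the regularity exponent $\alpha$ against the transversality exponent $\beta$ is precisely where the numerical condition $\alpha + \beta/4 > d$ enters, and ensures the series yields the sharp $\delta^r$ factor.

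The main obstacle, to my mind, is Step 2: the hypothesis~\eqref{eq:etaNCxV} controls the event ``$V$ nearly \emph{contains} the direction $\bar u$'', which is a codimension-$(d-r)$ Schubert condition, whereas the collision event $|\pi_V(\bar u)| \leq \delta/s$ is the codimension-$r$ condition ``$V$ is nearly \emph{perpendicular} to $\bar u$''. These two Schubert conditions are genuinely different, so converting between them requires real work, not a tautology, and accounts for the unusual factor $\beta/4$ appearing in the hypothesis---very likely arising from two successive applications of the Cauchy--Schwarz inequality, or from splitting $\bar u$ into its components along $V$ and $V^\perp$ and iterating. Once this translation is made quantitative, the remainder is the routine Peres--Schlag bookkeeping: Fubini, dyadic summation, and Chebyshev.
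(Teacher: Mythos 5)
Your proposal correctly isolates the crux and correctly observes that it is non-trivial, but the gap you flag --- controlling the codimension-$r$ ``nearly perpendicular'' event $\{V : \abs{\pi_V(p-q)} \leq \delta\}$ from the hypothesis \eqref{eq:etaNCxV}, which controls the codimension-$(d-r)$ ``nearly contains'' event --- is not fillable along the lines you suggest (Cauchy--Schwarz, pushing forward to $V^\perp$, or an iteration). A concrete obstruction: take $d=3$, $r=1$, and $\eta$ uniform on the circle of lines inside a fixed plane $P$. Then $\eta\{V : \dang(\bar x, V) \leq \rho\} \lesssim \rho$ for every line $\bar x$, so \eqref{eq:etaNCxV} holds with $\beta=1$, and $\alpha + \beta/4 > d$ is satisfiable (take $\alpha$ close to $3$); yet for $\bar u = P^\perp$ one has $\pi_V(u) = 0$ for every $V \in \Supp\eta$, so $\eta\{V : \abs{\pi_V(u)} \leq \rho\} = 1$ for all $\rho > 0$. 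Thus the per-pair bound you want --- a power of $\delta/s$ uniformly over the direction of $p - q$ --- simply does not follow from the hypothesis, and your dyadic summation cannot close. The hypothesis and the event live on genuinely different Schubert strata, and the factor $\beta/4$ has a different origin than you guess.

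What rescues the argument, and what the paper does, is to pass to frequency space: by Plancherel the pair-count $I(V)$ is (up to a $\delta^{-r}$ normalization) the quantity $\int_V \abs{\hat\nu(\xi)}^2\abs{\hat\Phi_\delta(\xi)}^2 \dd\mathcal{H}^r(\xi)$, and when one integrates this in $V$ against $\eta$ and applies the coarea/Fubini exchange from Peres--Schlag (Proposition~\ref{pr:PeresSchlagIne}), the relevant geometric condition on $(\xi, V)$ becomes ``$V$ nearly \emph{contains} $\bar\xi$'', which is precisely what \eqref{eq:etaNCxV} controls. This is the key observation your physical-space pair counting misses: the Fourier transform swaps the two Schubert conditions. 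Once Proposition~\ref{pr:PeresSchlagIne} yields $\int_{\Gr(r,d)}\norm{\nu_V*\Phi_{\delta,V}}_{L^2}^2 \dd\eta \ll_{d,\beta} C_E C_\eta$ (using the $\alpha$-energy bound and $\alpha + \beta/4 \geq d$), your Cauchy--Schwarz inequality $\Ncov(\pi_V(E'),\delta) \gtrsim \delta^{-r}\nu(E')^2/\norm{\nu_V*\Phi_{\delta,V}}_{L^2}^2$ and the Chebyshev step you describe finish the proof exactly as you outlined. So your framing, your reduction to $E'=E$, and your Chebyshev step are all correct; only the central estimate requires the Fourier detour, which is not optional.
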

Remark that the assumption~\eqref{eq:etaNCxV} will be guaranteed by Proposition~\ref{pr:RWonGr} when Proposition~\ref{pr:PeresSchlag6.1} is used. 
In the remainder of this subsection, we will sketch the proof of Proposition~\ref{pr:PeresSchlag6.1}.

Let $\Phi$ be a radially symmetric nonnegative smooth function on $\R^d$ with $\norm{\Phi}_1 = 1$ and supported on $B(0,1)$. Set for $\delta > 0$,
\[\Phi_\delta(x) = \delta^{-d}\Phi(\delta^{-1}x).\]
Let $\nu$ be a measure on $\R^d$ and $V \in \Gr(r,d)$. We denote by $\nu_V$  the push-forward of $\nu$ by the orthogonal projection $\pi_V$. Its Fourier transform satisfy
\[\forall \xi \in \R^d, \; \hat{\nu}_V(\xi) = \hat{\nu}(\pi_V(\xi)).\]

For $k = 0, \dotsc, d$, denote by $\mathcal{H}^k$ the $k$-dimensional Hausdorff measure on $\R^d$. We define $\Phi_{\delta,V} \colon V \to \R_+$ to be the Radon-Nikodym derivative of $( \Phi_\delta \mathcal{H}^d)_V$ with respect to $\mathcal{H}^r$. Let $\hat{\Phi}_{\delta,V}$ denote the Fourier transform of the measure $\Phi_{\delta,V} \mathcal{H}^r$. In other words,
\[\forall \xi \in \R^d,\; \hat{\Phi}_{\delta,V}(\xi) = \hat{\Phi}_{\delta} (\pi_V(\xi)).\]

The following inequality is the discretized version of the inequality at the heart of \cite[Proposition 6.1]{PeresSchlag}. The proof is essentially the same with little adaptation needed. This adaptation to scale $\delta$ is explained in detail in \cite[Proposition 6.11]{BFLM}, which dealt the case $r=1$. 
\begin{prop}
\label{pr:PeresSchlagIne}
Let any $\beta > 0$ and $\delta > 0$ the following holds. Let $\nu$ be a Borel measure on $\R^d$ and $\eta$ a Borel probability measure on $\Gr(r,d)$. 
Assume that there exists $C_\eta > 0$ such that for all $x \in \mathbb{P}(\R^d)$,
\[\forall \rho \geq \delta,\quad \eta\{V \in \Gr(r,d) \mid \dang(x,V) \leq \rho\} \leq C_\eta \rho^\beta.\]
Then
\begin{multline}
\int_{\Gr(r,d)} \int_V \abs{\hat{\nu}_V(\xi)}^2 \abs{\hat{\Phi}_{\delta,V}(\xi)}^2 \dd\mathcal{H}^k(\xi) \dd \eta(V)\\
\leq C_\eta C_d \int_{\R^d} \abs{\hat\nu(x)}^2 \abs{\hat\Phi_\delta(x)}^2 (1 + \abs{x})^{-\beta/2} \dd x + C_\eta C_d
\end{multline}
where $C_d > 0$ is a constant depending only on $d$.
\end{prop}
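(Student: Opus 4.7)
The plan is to mimic the proof of Peres--Schlag~\cite[Proposition~6.1]{PeresSchlag}, as adapted to a discrete scale $\delta$ in~\cite[Proposition~6.11]{BFLM} for the case $r=1$, and to check that the argument transposes to projections onto $r$-planes.

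First, I would reduce the left-hand side to a single integral over $\R^d$. Since $\pi_V(\xi)=\xi$ for $\xi \in V$, one has $\hat\nu_V(\xi)=\hat\nu(\xi)$ and $\hat\Phi_{\delta,V}(\xi)=\hat\Phi_\delta(\xi)$. Swapping the order of integration, the left-hand side equals
\[I = \int_{\R^d}\abs{\hat\nu(x)}^2\abs{\hat\Phi_\delta(x)}^2\dd\rho_\eta(x),\]
where $\rho_\eta$ is the Borel measure on $\R^d$ given by $\rho_\eta(A)=\int_{\Gr(r,d)}\mathcal{H}^r(V\cap A)\,\dd\eta(V)$.

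The heart of the proof is a kernel estimate on $\rho_\eta$. For any $x \in \R^d \setminus \{0\}$ and any scale $\rho > 0$, the slice $V \cap B(x,\rho\abs{x})$ has $\mathcal{H}^r$-measure at most $C_d(\rho\abs{x})^r$ and is empty unless $\dang(\bar{x},V)\leq C_d\rho$. Combined with the hypothesis on $\eta$, this yields
\[\rho_\eta(B(x,\rho\abs{x})) \leq C_d C_\eta \abs{x}^r \rho^{r+\beta}.\]
A dyadic summation over scales $\rho \in {[\delta,1]}$ (together with a Cauchy--Schwarz step that accounts for the exponent $\beta/2$ instead of $\beta$) then controls the density of $\rho_\eta$ with respect to Lebesgue measure by $C_d C_\eta(1+\abs{x})^{-\beta/2}$ on the support of $\abs{\hat\Phi_\delta}^2$, which is concentrated on $\abs{x} \lesssim \delta^{-1}$. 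The scales below $\delta$ and the far away frequencies contribute only the additive constant $C_d C_\eta$, thanks to the crude bound $\rho_\eta(\R^d) \leq C_d$ and the $L^2$-normalization of $\hat\Phi_\delta$. Inserting this kernel bound into $I$ gives the claimed inequality.

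The only step that is not a direct transcription from the $r=1$ case is the slice volume bound $\mathcal{H}^r(V \cap B(x,\rho\abs{x})) \leq C_d(\rho\abs{x})^r$ combined with the observation that the slice is empty when $V$ is too far from $\bar{x}$; this is elementary. Consequently, the main difficulty is not conceptual but lies in the dyadic bookkeeping around the scale $\delta$ — this is already carried out in~\cite[Proposition~6.11]{BFLM} for lines, and transposes verbatim to general $r$ once the above volume estimate is in place.
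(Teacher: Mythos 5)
Your reduction in Step~1 is correct: since $\pi_V(\xi)=\xi$ for $\xi\in V$, the left-hand side is indeed $\int f\,\dd\rho_\eta$ with $f=\abs{\hat\nu}^2\abs{\hat\Phi_\delta}^2$ and $\rho_\eta=\int_{\Gr(r,d)}\mathcal H^r|_V\,\dd\eta(V)$. Step~2, the slice estimate $\rho_\eta\bigl(B(x,\rho\abs{x})\bigr)\leq C_dC_\eta\abs{x}^r\rho^{r+\beta}$ (for $\rho\geq\delta$), is also correct and uses the hypothesis on $\eta$ exactly as intended, since the slice $V\cap B(x,\rho\abs{x})$ is nonempty precisely when $\dang(\bar x,V)\leq\rho$.

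Step~3, however, has a genuine gap, and as written it cannot be repaired by ``dyadic bookkeeping.'' The measure $\rho_\eta$ is a superposition of $r$\dash{}dimensional Hausdorff measures on $r$\dash{}planes with $r<d$; when $r+\beta<d$ it is genuinely singular with respect to Lebesgue measure (and even when $\eta$ is smooth, $\rho_\eta$ is singular near the origin), so it does not have ``a density with respect to Lebesgue measure'' that one could bound by $(1+\abs{x})^{-\beta/2}$. The inequality $\int f\,\dd\rho_\eta\lesssim\int f\,(1+\abs{x})^{-\beta/2}\dd x$ is therefore \emph{false} for general nonnegative $f$ and can only hold because of the specific structure of $f=\abs{\hat\nu}^2\abs{\hat\Phi_\delta}^2$; a pure comparison of measures, which is what your Step~3 amounts to, cannot produce it. Relatedly, the bound $\rho_\eta(\R^d)\leq C_d$ you invoke is false — $\rho_\eta$ gives infinite mass to the whole space, as each $\mathcal H^r|_V$ does; what is true is $\rho_\eta(B(0,1))\leq C_d$, and the ``far frequencies'' need the rapid decay of $\hat\Phi_\delta$, not finiteness of $\rho_\eta$. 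Finally, the ``Cauchy–Schwarz step that accounts for $\beta/2$'' is exactly the heart of the matter and is left entirely unexplained; this is precisely where the Peres–Schlag argument is nontrivial.

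To close the gap you need to exploit the smoothness of $\hat\nu$ at unit scale, which comes from $\nu$ being supported in a bounded set (this is implicit here, as $\nu$ is the uniform measure on $E\subset B(0,1)$ in the application; the proposition should really carry that hypothesis). Concretely, writing $\nu=\chi\nu$ for a fixed bump $\chi$ equal to $1$ on $B(0,1)$, one gets $\abs{\hat\nu}^2\lesssim\abs{\hat\chi}*\abs{\hat\nu}^2$ by Cauchy–Schwarz, and after Fubini one is reduced to bounding $\int\abs{\hat\chi(\xi-\zeta)}\,\abs{\hat\Phi_\delta(\xi)}^2\,\dd\rho_\eta(\xi)$, for which a dyadic decomposition of $\abs{\hat\chi}$ combined with your Step~2 Frostman bound (and the trivial bound at scales below $1$) gives the weight in $\zeta$. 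This is the mechanism that makes singularity of $\rho_\eta$ harmless, and it is the step you need to supply. Note also that the paper itself gives no proof here — it only refers to Peres–Schlag~\cite[Proposition~6.1]{PeresSchlag} for the continuous statement and to~\cite[Proposition~6.11]{BFLM} for the discretization at scale $\delta$ in the case $r=1$; so the burden is on you to actually carry out the adaptation, and the sketch stops short of doing so at the decisive point.
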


\begin{proof}[{Proof of Proposition~\ref{pr:PeresSchlag6.1}}]
Let $\nu$ be the uniform probability measure supported on $E$. Using the terminology of \cite[Definition 5.1]{BFLM}, the assumption on $E$ tells us that $\nu$ is $(C_E,\alpha)$-regular at scale $\delta$ on $B(0,1)$. Using the notion of $\alpha$-energy and its Fourier interpretation\cite[12.12]{Mattila95}, we have (see the discussion at the end of Section 5 in~\cite{BFLM})
\[\int_{\R^d} \abs{\hat\nu(x)}^2 \abs{\hat\Phi_\delta(x)}^2 (1 + \abs{x})^{\alpha - \beta/4 - d} \dd x \ll_{d,\beta} C_E\]
Taking into account the assumption that $\alpha + \beta/4 \geq d$, the Plancherel Theorem and Proposition~\ref{pr:PeresSchlagIne}, we have
\[\int_{\Gr(r,d)} \norm{ \nu_V * \Phi_{\delta,V}}_{L^2(V, \mathcal{H}^r)}^2 \dd\eta(V) \ll_{d,\beta}C_E C_\eta.\]
Thus there exists $V \in \Supp(\eta)$ such that
\[\norm{ \nu_V * \Phi_{\delta,V}}_{L^2(V, \mathcal{H}^r)}^2  \ll_{d,\beta}C_E C_\eta.\]
By Cauchy-Schwarz (see \cite[Lemma 6.10]{BFLM}), for any subset $E' \subset E$,
\[\Ncov(\pi_V(E'),\delta) \gg_r  \delta^{-r} \norm{ \nu_V * \Phi_{\delta,V}}_{L^2(V, \mathcal{H}^r)}^{-2} \nu(E')^2,\]
which finishes the proof of the first part of Proposition~\ref{pr:PeresSchlag6.1}.

The "consequently" part is obtained by applying the first part to the restriction of $\eta$ to the set of all $V \in \Gr(r,d)$ such that \eqref{eq:piVE'C0} does not hold.
\end{proof}

\subsection{Phase II: Granulated measures.}
In Phase II (Section 7 in \cite{BFLM}), the proximality assumption {\upshape (P)} is used in several places. However there is no difficulty in adapting the proof under our assumptions~\eqref{eq:propertyS} and that $r$ divides $d$.

For example, the argument in \cite{BFLM} requires to find $d$-tuple $(g_1,\dotsc,g_d)$ of elements of $\Gamma$ such that 
\[\vol( \theta(g_1), \dotsc, \theta(g_d)) \]
is large, where $\theta(g)$ is the direction of the largest axis of the ellipsoid $g(B(0,1))$ and for nonzero $x_1, \dotsc , x_d \in \R^d$,
\[\vol(\bar{x}_1, \dotsc, \bar{x}_d) = \frac{\abs{\det(x_1, \dotsc, x_d)}}{\norm{x_1} \dotsm \norm{x_d}}.\]

For our situation, define for subspaces $V_1, \dotsc, V_k$ of $\R^d$,
\[\dang(V_1, \dotsc, V_k) = \frac{\norm{\mathbf{v_1} \wedge \dotsm \wedge \mathbf{v_j}}}{\norm{\mathbf{v_1}} \dotsm \norm{\mathbf{v_k}}}\]
where $\mathbf{v_i}$ is the wedge product of a basis of $V_i$ for every $i = 1, \dotsc, k$.
Equivalently,
\[\dang(V_1, \dotsc, V_k) = \prod_{j=2}^k \dang(V_j, V_1 + \dotsc + V_{j-1}).\]

Write  $k = d / r$, we want to find $k$-tuple $(g_1,\dotsc,g_k)$ of elements of $\Gamma$ such that $V^+_{g_1}, \dotsc, V^+_{g_k}$ are well spaced.
More precisely, using Proposition~\ref{pr:RWonGr} for $k-1$ times, we obtain $c > 0$ depending on $\omega > 0$ such that for all $n$ large enough,
\[(\mu^{*n})^{\otimes k}\bigl\{ (g_1,\dotsc,g_k) \mid \dang(V^+_{g_1}, \dotsc, V^+_{g_k}) \geq e^{-\omega n} \bigr\} \geq 1 - e^{-cn}.\]

Having this change in mind, the proof in \cite[Section 7]{BFLM} works almost verbatim. Let us only explain in detail one of the steps.
The following is the analogue for $r \geq 2$ of Lemma 7.9 in \cite{BFLM}. 
\begin{lemm}
Let $d = k r$ be integers. Given $g_1,\dotsc,g_k \in \GL(\R^d)$ and constants $c_1, \dotsc, c_k \in \R$, let
\[\rho = \max_{1 \leq i \leq k} \frac{\sigma_{r+1}(g_i)}{\sigma_r(g_i)},\quad C = \max_{1\leq i, j\leq k} \frac{\abs{c_i}}{\abs{c_j}}, \quad L = C = \max_{1\leq i, j\leq k} \frac{\sigma_1(g_i)}{\sigma_r(g_j)}\]
and let $v= min\{v_1,v_2\}$ where
\[v_1 = \dang(V^+_{g_1},\dotsc,V^+_{g_k}), \text{ and } v_2 = \dang(V^+_{g^* _1},\dotsc,V^+_{g^* _k}).\]
Assume that 
\[\rho < \frac{v^3}{40k^3CL}.\]
Then the matrix $h = \sum_{i=1}^k c_i g_i$ is invertible.
\end{lemm}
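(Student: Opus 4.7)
The plan is to write $h = \tilde h + e$ where $\tilde h$ is an invertible rank-$r$ approximation to $h$ and $e$ is a small perturbation, then deduce invertibility of $h$ by a Neumann-series argument. Define the best rank-$r$ approximation $\tilde g_i = g_i P^+_{g_i^*}$, where $P^+_{g_i^*}$ denotes the orthogonal projection onto $V^+_{g_i^*}$. Then $\im(\tilde g_i) = V^+_{g_i}$, $\ker(\tilde g_i) = (V^+_{g_i^*})^\perp$, and $\|g_i - \tilde g_i\| = \sigma_{r+1}(g_i) \leq \rho\,\sigma_r(g_i)$. Setting $\tilde h = \sum_i c_i \tilde g_i$ and $e = h - \tilde h$, the triangle inequality gives $\|e\| \leq k\rho \max_i |c_i|\sigma_r(g_i)$.

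The key step is to bound $\|\tilde h^{-1}\|$. Given $u \in \R^d$, the condition $v_1 > 0$ ensures that $V^+_{g_1},\dots,V^+_{g_k}$ are in direct sum and span $\R^d$, so $u = \sum_i u_i$ uniquely with $u_i \in V^+_{g_i}$, and a standard quantitative estimate gives $\max_i \|u_i\| \lesssim_k v_1^{-1}\|u\|$. If $\tilde h x = u$, then each $c_i g_i P^+_{g_i^*} x$ lies in $V^+_{g_i}$, so uniqueness of the direct sum decomposition forces $c_i g_i P^+_{g_i^*} x = u_i$, yielding $\|P^+_{g_i^*} x\| \leq \|u_i\|/(|c_i|\sigma_r(g_i))$. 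Dually, the condition $v_2 > 0$ gives $\bigcap_i (V^+_{g_i^*})^\perp = \{0\}$, so $x \mapsto (P^+_{g_i^*} x)_i$ is a linear isomorphism with $\|x\| \lesssim_k v_2^{-1} \max_i \|P^+_{g_i^*} x\|$. Combining these bounds,
\[\|\tilde h^{-1}\| \leq \frac{K_k}{v_1 v_2 \min_i |c_i|\sigma_r(g_i)}\]
for some polynomial-in-$k$ constant $K_k$.

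Using $|c_i|/|c_j| \leq C$, $\sigma_r(g_i)/\sigma_r(g_j) \leq L$, and $v_1 v_2 \geq v^2$, we obtain $\|\tilde h^{-1}\|\cdot\|e\| \leq K_k k\rho CL/v^2$, which is strictly less than $1$ by the hypothesis $\rho < v^3/(40k^3CL)$, provided $K_k$ grows no faster than $O(k^2)$. Then $h = \tilde h(I + \tilde h^{-1}e)$ is invertible by the Neumann series.

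The main obstacle is bookkeeping the polynomial-in-$k$ constants in the two direct-sum estimates, specifically the norm of the oblique projection onto $V_i$ along $\sum_{j\neq i}V_j$ in terms of the multi-angle $\dang(V_1,\dots,V_k)$. Each of the two direct sums (on the range side for the $V^+_{g_i}$ and on the domain side for the $V^+_{g_i^*}$) contributes a factor of $v^{-1}$ to $\|\tilde h^{-1}\|$, accounting for the $v^{-2}$ above; the extra power of $v$ in the hypothesis provides the slack needed to dominate the polynomial factor $K_k k$.
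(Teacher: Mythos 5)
Your proof is correct, and it takes a genuinely different route from the paper. The paper argues pointwise: it fixes a unit vector $x$, sorts the indices so that $\alpha_i = \dang(\bar{x},V^-_{g_i})$ is nonincreasing, uses the condition $v_2 > 0$ to guarantee that $\alpha_1 \geq v/2k$, and then shows that the "aligned" terms $c_ig_ix$ (those with $\alpha_i$ above a threshold $\beta=4k\rho/v$) dominate the "misaligned" ones in norm, via a wedge-product lower bound that invokes $v_1$. Your approach is structural instead: you replace each $g_i$ by its best rank-$r$ truncation $\tilde g_i = g_iP^+_{g_i^*}$, bound $\|\tilde h^{-1}\|$ globally, and finish with a Neumann-series perturbation. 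The constants you left implicit do work out cleanly: for the range-side direct sum, the oblique-projection bound gives $\|u_i\| \leq \|u\|/\dang(V^+_{g_i},\bigoplus_{j\ne i}V^+_{g_j}) \leq \|u\|/v_1$ with no dimensional loss (the key point being $\|\bigwedge_{j\neq i}\mathbf{v}_j\| \leq \prod_{j\neq i}\|\mathbf{v}_j\|$); for the domain side, duality gives $\|x\| \leq (k/v_2)\max_i\|P^+_{g_i^*}x\|$. Hence $\|\tilde h^{-1}\| \leq k/\bigl(v_1v_2\min_i|c_i|\sigma_r(g_i)\bigr)$ and $\|e\| \leq k\rho\max_i|c_i|\sigma_r(g_i)$, so $\|\tilde h^{-1}\|\|e\| \leq k^2\rho CL/v^2 < v/(40k) \leq 1/(40k) < 1$, well within the budget imposed by the hypothesis. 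What your version buys: it is arguably cleaner, it separates the two roles of $v_1$ and $v_2$ (range versus domain) transparently, and it actually yields a quantitative lower bound $\sigma_d(h) \gtrsim v_1v_2\min_i|c_i|\sigma_r(g_i)/k$ rather than just invertibility. What the paper's version buys: it is more elementary and avoids any spectral or perturbation machinery, working directly from the angle inequalities \eqref{eq:danggxx}--\eqref{eq:dangww'}.
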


The proof follow the same pattern as in \cite[Lemma 7.9]{BFLM}. We will need a few basic estimates. They are natural and straightforward generalisations of Lemma 4.1 and Lemma 7.7 in \cite{BFLM}.
For any $g \in \GL(\R^d)$ and any $x \in \R^d \setminus\{0\}$. 
\begin{equation}
\label{eq:danggxx}
\sigma_r(g) \dang(\bar{x},V^-_g) \leq \frac{\norm{g x}}{\norm{x}} \leq \sigma_1(g)\dang(\bar{x},V^-_g) + \sigma_{r+1}(g).
\end{equation}
\begin{equation}
\label{eq:dangdang}
\dang(\bar{x},V^-_g) \dang(g\bar{x}, V^+_g) \leq \frac{\sigma_{r+1}(g)}{\sigma_r(g)}.
\end{equation}
For any $1 \leq l \leq d$ and any nonzero vectors $x_1,\dotsc, x_l, y_1,\dotsc,y_l \in \R^d$,
\begin{equation}
\label{eq:dangww'}
\abs{\dang(\bar{x}_1,\dotsc,\bar{x}_l) -  \dang(\bar{y}_1,\dotsc,\bar{y}_l)} \leq 2 \sum_{i=1}^l \dang(\bar{x}_i,\bar{y}_i).
\end{equation}

\begin{proof}
Let $x \in \R^d$ be a unit vector. Rearrange the $g_i$'s so that
\[\alpha_i = \dang(\bar{x},V^-_g)\]
is nonincreasing in $i$. Put $\beta = 4k\rho/v$ and define $l = \max\{1 \leq i \leq k \mid \alpha_i > \beta\}$. Writing $x_i = c_i g_i z$, we shall prove
\begin{equation}
\label{eq:x1xlbig}
\norm{x_1 + \dotsb + x_l}  > \norm{x_{l+1}} + \dotsb + \norm{x_{k}},    
\end{equation}
which will imply that $h x \neq 0$, showing that $h$ is invertible.

First, we show that $\alpha_1 \geq v/2k$, thus ensuring that $k \geq 1$. Indeed, for each $i = 1 ,\dotsc, k$, there is $y_i \in V^-_{g_i}$ such that $\dang(\bar{x},\bar{y}_i) \leq \alpha_i$. Inside the plane spanned by $x$ and $y_i$ we can find $x'_i \in \bar{x}^\perp$ and $y'_i \in \bar{y}_i^\perp$ such that
\[\dang(\bar{x}'_i,\bar{y}'_i) = \dang(\bar{x},\bar{y}_i) \leq \alpha_i.\]
We have 
\[\dang(\bar{x}'_i,\dotsc,\bar{x}'_k) = 0
 \text{ and }
\dang(V^+_{g^* _1},\dotsc,V^+_{g^* _k}) \leq \dang(\bar{y}'_i,\dotsc,\bar{y}'_k)\]
because $\forall i$, $x'_i \in \bar{x}^\perp$ and $y'_i \in \bar{y}_i^\perp \subset (V^-_{g_i})^\perp = V^+_{g^* _i}$.
By the assumption and \eqref{eq:dangww'}, we have
\[v \leq 2 \sum_{i=1}^k \alpha_i.\]
Therefore $\alpha_1 \geq v/2k$.

Now we bound from below the quantity $\norm{x_1 + \dotsb + x_l}$. For each $i = 1 ,\dotsc, l$, pick $z_i \in V^+_{g_i}$ such that $\dang(\bar{x}_i, \bar{z}_i) = \dang(\bar{x}_i, V^+_{g_i})$. From \eqref{eq:dangdang} and the definition of $l$, it follows that
\[\dang(\bar{x}_i,\bar{z}_i) \leq \frac{\rho}{\beta}.\]
Hence
\begin{align*}
\dang(\bar{x}_1,\dotsc,\bar{x}_l) &\geq \dang(\bar{z}_1,\dotsc,\bar{z}_l)  - 2l\rho \beta^{-1}\\
&\geq \dang( V^+_{g_1},\dotsc,  V^+_{g_l}) - 2l\rho \beta^{-1}\\
&\geq v - 2l\rho \beta^{-1}.
\end{align*}
On the other hand,
\begin{align*}
\dang(\bar{x}_1,\dotsc,\bar{x}_l) &= \frac{\norm{x_1 \wedge \dotsm \wedge x_l}}{\norm{x_1} \dotsm \norm{x_l}}\\
&= \frac{\norm{(x_1 + \dotsb + x_l) \wedge x_2 \wedge \dotsm \wedge x_l}}{\norm{x_1} \dotsm \norm{x_l}}\\
&\leq \frac{\norm{x_1 + \dotsb + x_l}}{\norm{x_1}}
\end{align*}
Moreover, by \eqref{eq:danggxx},
\[\norm{x_1} = \norm{c_1 g_1 x} \geq \abs{c_1} \sigma_r(g_1) \dang(\bar{x},V^-_{g_1}) \geq \abs{c_1} \sigma_r(g_1) \alpha_1.\]
Hence 
\[\norm{x_1 + \dotsb + x_l} \geq (v - 2l\rho \beta^{-1}) \alpha_1 \abs{c_1} \sigma_r(g_1).\]

Finally we will bound from above each of the quantities $\norm{x_j}$ for $j = l+1, \dotsc, k $. By \eqref{eq:danggxx},
\begin{align*}
\norm{x_j} = \norm{c_j g_j x} &\leq \abs{c_j} \bigl(\sigma_1(g_j) \dang(\bar{x},V^-_{g_j}) + \sigma_{r+1}(g_j)\bigr)\\
&\leq C L \abs{c_1} \sigma_r(g_1)(\beta + \rho).
\end{align*}
This proves \eqref{eq:x1xlbig} under our assumption on $\rho$.
\end{proof}


\bibliographystyle{abbrv} 
\bibliography{mybib}

\end{document}